\documentclass{scrartcl}

\usepackage[english]{babel}
\usepackage[utf8]{inputenc}
\usepackage{mathtools}
\usepackage{mathpazo}
\usepackage{amsmath}
\usepackage{amssymb}
\usepackage{amsthm}
\usepackage{graphicx}
\usepackage{subcaption}
\usepackage{aliascnt}
\usepackage[top=1.25in]{geometry}
\usepackage{enumitem}
\usepackage{todonotes}
\usepackage[normalem]{ulem}
\setenumerate{label=\textup{(\alph*)}}
\usepackage[hyphens]{url}
\usepackage[hidelinks]{hyperref}
\usepackage[capitalise]{cleveref}
\usepackage{mathrsfs}

\usepackage{pgfplots}
\pgfplotsset{compat=1.18}
\definecolor{tab-blue}{HTML}{1F77B4}
\definecolor{tab-orange}{HTML}{FF7F0E}
\definecolor{tab-brown}{HTML}{8C564B}

\newtheorem{theorem}{Theorem}[section]

\newaliascnt{lemma}{theorem}
\newtheorem{lemma}[lemma]{Lemma}
\aliascntresetthe{lemma}

\newaliascnt{proposition}{theorem}
\newtheorem{proposition}[proposition]{Proposition}
\aliascntresetthe{proposition}

\newaliascnt{corollary}{theorem}
\newtheorem{corollary}[corollary]{Corollary}
\aliascntresetthe{corollary}

\theoremstyle{definition}

\newaliascnt{remark}{theorem}
\newtheorem{remark}[remark]{Remark}
\aliascntresetthe{remark}

\newaliascnt{definition}{theorem}

\aliascntresetthe{definition}

\newaliascnt{example}{theorem}
\newtheorem{example}[example]{Example}
\aliascntresetthe{example}

\newtheorem{assumption}{Assumption}

\crefdefaultlabelformat{#2\textup{#1}#3}
\Crefname{assumption}{Assumption}{Assumptions}
\crefname{assumption}{Assumption}{Assumptions}
\Crefrangeformat{assumption}{Assumptions #3\textup{#1}#4--#5\textup{#2}#6}
\crefname{proposition}{Proposition}{Propositions}
\Crefname{proposition}{Proposition}{Propositions}
\crefname{lemma}{Lemma}{Lemmas}
\Crefname{lemma}{Lemma}{Lemmas}

\newcommand{\keywords}[1]
{
	{\small
		\textbf{Key words.} {#1}
		\\
	}
}

\renewcommand{\abstract}[1]
{
	{\small
		\textbf{Abstract.} {#1}
		\\
	}
}

\begin{document}

\title{Asymptotic Analysis of Empirical Dynamic Programming in Infinite-Horizon Stochastic Optimal Control\thanks{\textbf{Funding}:
The research  was supported by the National Science Foundation under Grant No.\ DMS-2410944.
}}
\author{Xin Chen\footnotemark[2] \and Elif Sena Isik\footnotemark[2] \and Johannes Milz\thanks{H.\ Milton Stewart School of Industrial and Systems Engineering, Georgia Institute of Technology, Atlanta, Georgia
30332, USA (\texttt{xin.chen@isye.gatech.edu}, 
\texttt{eisik6@gatech.edu}, \texttt{johannes.milz@isye.gatech.edu})}
}

\maketitle

\abstract{%
We derive statistical limit theorems for sample-based approximations of infinite-horizon discounted stochastic optimal control problems in discrete time. Our first result is a functional central limit theorem for the sample-based value function under a uniqueness-type condition on population optimal policies. The limiting law is a mean-zero Gaussian process characterized by a linear fixed point equation that resembles a dynamic programming principle. We compare these asymptotics with those obtained from sample-based policy optimization and illustrate that their limiting variances can be different. We also derive a limit theorem for models with  nonunique optimal policies, where the limiting law may be non-Gaussian. Applications to inventory control and renewable harvesting illustrate the theory.
}

\keywords{empirical dynamic programming; stochastic dynamic programming;
random fixed-point equations; sample average approximation; central
limit theorem; Z-estimation}

\section{Introduction}

Empirical dynamic programming (DP) is a basic computational approach in
data-driven infinite-horizon stochastic optimal control (SOC). When
samples from the distribution of the exogenous noise are available, a
natural approach is to replace the expectation in the DP equation by a
sample average and solve the resulting sample average approximation
(SAA) version of the DP problem \cite{Kleywegt2002,cltshapcheng21}. The output is the SAA value function, a random function determined by the sample. Its
asymptotic law provides insights on how sampling error enters 
and propagates through the DP fixed point equation. 
The resulting law provides a large-sample distributional approximation for the SAA value function and thereby supports uncertainty quantification.

This paper develops statistical limit theory for this SAA value
function. The main object is the error between the SAA value function and
the population value function. 
Under regularity conditions, including a uniqueness-type condition on the one-step outcomes induced by population optimal policies and an empirical linearization condition, 
we characterize the 
error's limiting law under a uniqueness-type condition on the one-step outcomes induced
by population optimal policies, compare empirical DP with
trajectory-based SAA policy optimization, and extend the analysis to
models with nonunique optimal policies.

The infinite-horizon setting differs from both static stochastic
optimization and finite-horizon DP\@. In static SAA, sampling error enters
a single random optimization problem, so classical stochastic
programming asymptotics apply \cite{SDR}. In
finite-horizon DP, the error is propagated through a finite backward
recursion, leading to stagewise recursive limits \cite{Milz2025}. In
discounted infinite-horizon DP, by contrast, the SAA value function is
the fixed point of the SAA DP operator. Its sampling error is
therefore governed by a random fixed point equation.

To separate ordinary sampling error from the additional error created by
the fixed-point equation, we first evaluate the SAA DP operator at
the population value function. Let \(V\) denote the population value
function and let \(\mathcal T\) denote the corresponding DP operator.
The population value function satisfies the fixed point equation
$
V=\mathcal T V
$.
Given $N$ independent and identically distributed (i.i.d.) observations of
the model's random noise, let \(\hat{\mathcal T}_N\) denote the DP
operator obtained by replacing the true noise distribution with the
empirical distribution of the sample.
Because \(V\) is deterministic,
\[
N^{1/2}(\hat{\mathcal T}_N V-\mathcal T V)
\]
is the first-order empirical error of a state-indexed family of SAA optimization problems. No random fixed point is involved in this term. Its analysis is therefore analogous to static stochastic optimization and it yields the functional central limit theorem
(CLT)
\[
N^{1/2}(\hat{\mathcal T}_N V-\mathcal T V)
\rightsquigarrow
\mathfrak H,
\]
where \(\mathfrak H\) is a Gaussian process. 

To propagate this static sampling error through the SAA fixed point equation, recall that \(V=\mathcal T V\), while
the SAA value function satisfies
$
\hat V_N=\hat{\mathcal T}_N\hat V_N
$.
Let \(\gamma\in(0,1)\) denote the discount factor,
and let \(\mathcal L\) be the closed-loop
transition operator under a population optimal policy.
This operator can be thought of as a Hadamard
linearization of the DP operator $\mathcal{T}$,
and its operator norm is at most one.
This linearization is justified under the uniqueness-type
condition on the population optimal policies mentioned above.
Using the fixed point equations, we have
\[
\begin{aligned}
N^{1/2}(\hat V_N-V)
&=
N^{1/2}\big(\hat{\mathcal T}_N V-\mathcal T V
+
\gamma \mathcal L (\hat V_N-V) \big) \\
&\quad+
N^{1/2}
\bigl(
\hat{\mathcal T}_N\hat V_N
-
\hat{\mathcal T}_N V
-
\gamma\mathcal L(\hat V_N-V)
\bigr).
\end{aligned}
\]
If the empirical linearization condition
\[
N^{1/2}
\bigl(
\hat{\mathcal T}_N\hat V_N
-
\hat{\mathcal T}_N V
-
\gamma\mathcal L(\hat V_N-V)
\bigr)
=
o_p(1)
\]
is satisfied,
then
\[
(I-\gamma\mathcal L)\big[N^{1/2}(\hat V_N-V)\big]
=
N^{1/2}(\hat{\mathcal T}_N V-\mathcal T V)+o_p(1).
\]
Consequently,
\[
N^{1/2}(\hat V_N-V)
\rightsquigarrow
(I-\gamma\mathcal L)^{-1}\mathfrak H.
\]
Thus the CLT has two ingredients: a static SAA limit
\(\mathfrak H\) and its propagation through the optimal closed-loop
dynamics.

The main technical challenge is verifying the empirical linearization condition.
This condition asserts that, at the \(N^{-1/2}\) scale, replacing the
deterministic argument \(V\) of the empirical DP operator by the
SAA value function \(\hat V_N\) contributes only the linear term
\(\gamma\mathcal L(\hat V_N-V)\). We verify this condition by separating
it into two parts: a local linearization of the population DP
operator and a stochastic equicontinuity-type condition. Our sufficient conditions,
including compactness of a transition operator, finite state-action
models, and  Donsker-type conditions, are designed to
make this empirical linearization checkable in SOC models.

We compare empirical DP with an idealized trajectory-based SAA formulation that samples independent disturbance trajectories and minimizes the average discounted cost over a parameterized policy class.
Under a unique population optimal policy and a well-specified
policy class, its limiting variance equals the variance of the realized
total discounted cost under that policy.  A finite state-action example demonstrates
that this variance may differ from the limiting SAA DP optimal value variance.

We also consider SOC models in which population optimal policies can
induce different one-step costs or transitions. In this 
nonunique case, the limiting law may be non-Gaussian. We derive
a corresponding statistical limit theorem.

\subsection{Contributions}

The paper makes three main contributions.
\begin{enumerate}[label=\textup{(\roman*)},nosep]

\item
We establish a functional CLT for SAA value functions
under uniqueness of optimal one-step costs and transitions. The Gaussian
limit solves a linear random fixed point equation, and we provide
verifiable sufficient conditions for the required linearization and
stochastic equicontinuity conditions.

\item
We compare the SAA DP formulation with trajectory-based SAA policy
optimization. Under a unique population optimal policy, 
the latter has limiting variance equal to the variance of the
total discounted cost. 
We compare this variance with the limiting variance
of the SAA DP optimal value.

\item
We apply our theory in inventory control and renewable
harvesting models.
\end{enumerate}

For completeness, \Cref{sec:nonunique-policy-limit} gives a supplementary
functional statistical limit theorem beyond uniqueness of optimal
one-step costs and transitions. The resulting limit solves a nonlinear
random fixed point equation and need not be Gaussian.

\subsection{Related literature}

\paragraph{Static stochastic optimization.}
CLTs for SAA optimal values are standard in static stochastic
optimization \cite{Shapiro2003,SDR}. Although our models are dynamic,
parts of the analysis use ideas from this literature. 
Statistical limit theorems for
two-stage stochastic programs are given in \cite{Eichhorn2007}.

\paragraph{Empirical dynamic programming.}
Sample-based approximations of dynamic programs are studied by \cite{Haskell2016,Haskell2020} and related work on iterated random operators \cite{Gupta2024}. 
The finite-horizon CLT theory of \cite{Milz2025} is closest to the
present work. There, SAA value functions solve stochastic backward recursions, which
give a recursive characterization of the limiting laws. Since we consider discounted
infinite-horizon models, the SAA value function is instead the fixed point of the
SAA DP operator. The papers most closely related to the finite state-action
setting of our work are \cite{Mannor2007,Zhu2024}. 
For finite state-action discounted reward Markov decision processes,
\cite{Mannor2007} derives approximations to the bias and variance 
of estimated value functions. For a similar problem
class, \cite{Zhu2024} establishes CLTs for estimated
action-value and value functions. Recent finite-state inference results derive
CLTs for value and action-value functions 
under fixed policies \cite{Su2025}.

\paragraph{Pathwise performance limits.}
A separate line of work studies statistical limits for rewards or
costs realized along sample paths of dynamic programs; see, for example,
\cite{Arlotto2016}. These results concern the law of a realized
performance criterion when the transition law, cost functions, and
decision rule are fixed. In the present paper, the main 
random object is instead the SAA value function.

\paragraph{Inventory control.}
For inventory control problems, \cite{cltshapcheng21} establishes CLTs for
SAA optimal values. Our analysis instead develops a functional CLT for the
SAA value function, from which a CLT for the SAA optimal value follows by
evaluation at the initial state. 
Asymptotic normality for the estimated parameters
of data-driven \((s,S)\) policies and for optimal-cost estimators
is derived in \cite{Zhang2025}. In the present work, we do not
study the asymptotic distribution of SAA policies.

\paragraph{Z-estimation.}
The population and SAA DP equations are operator equations and
therefore fit naturally into the framework of Z-estimation in function spaces \cite{Vaart2023}. However, Theorem 3.3.1 in \cite{Vaart2023} does not by itself provide a CLT for our setting. Its hypotheses require a 
tightness condition on the scaled difference of population and SAA value
functions, and this property may not be 
easily verified for SOC models.

\subsection{Outline}

\Cref{sec-basic} develops statistical limit theory for SAA optimal values. \Cref{sec:policy-saa} derives a trajectory-based
policy-optimization CLT, 
computes closed-loop variances, and 
discusses a finite state-action example.
\Cref{sec:inventory-control,sec:renewable-resource-harvesting} discuss applications.

\section{Infinite-horizon stochastic optimal control in discrete time}
\label{sec-basic}

We consider an infinite-horizon discounted SOC problem with discount
factor \(\gamma\in(0,1)\). For a deterministic initial state
\(x_0\in\mathcal X\), the problem is given by
\begin{equation}
\label{eq:inf-soc}
\inf_{\pi\in\Pi}
\mathbb{E}
\bigg[
\sum_{t\geq 0}
\gamma^t f(x_t,\pi(x_t),\xi_t)
\bigg]
\quad
\text{s.t.}
\quad
x_{t+1}=F(x_t,\pi(x_t),\xi_t),
\quad t=0,1,2,\ldots .
\end{equation}
Here \(\Pi\) denotes the class of measurable
stationary Markov policies
\(\pi:\mathcal X\to\mathcal U\). The state space is
\(\mathcal X\subset\mathbb R^n\), the control space is
\(\mathcal U\subset\mathbb R^m\), and \((\xi_t)\) is an i.i.d.\
sequence with common distribution \(P\). The distribution \(P\) is
time-homogeneous 
 with closed support
\(\Xi\subset\mathbb R^d\), and does not depend on the state or control. The
function \(f:\mathcal X\times\mathcal U\times\Xi\to\mathbb R\) is the
stage cost, and \(F:\mathcal X\times\mathcal U\times\Xi\to\mathcal X\)
is the state-update map. 

\subsection{Model assumptions and DP operators}

We impose basic regularity conditions on our SOC model.

\begin{assumption}[{Compact Lipschitz  model}]
\label{inf-soc:ass-2}
\textup{(i)}
The sets $\mathcal{X}$, $\mathcal{U}$, and $\Xi$ are compact.
\textup{(ii)}~The maps $f:\mathcal{X}\times\mathcal{U}\times\Xi\to \mathbb{R}$
and $F:\mathcal{X}\times\mathcal{U}\times\Xi\to \mathcal{X}$  are continuous.
\textup{(iii)}~There exists a square integrable function  $K \colon \Xi  \to [0,\infty)$
such that 
for all   $\xi\in \Xi$,
$f(\cdot, \cdot,\xi)$ and 
$F(\cdot, \cdot,\xi)$ are Lipschitz
continuous with Lipschitz constant $K(\xi)$.
\end{assumption}

We now define the action-value operators
under \Cref{inf-soc:ass-2}.
For a compact set \(E\), let \(C(E)\) denote the space of continuous
real-valued functions on \(E\), equipped with the sup norm
\(\|\cdot\|_\infty\).  
We define the
action-value operator
\(\mathcal Q:C(\mathcal X)\to C(\mathcal X\times\mathcal U)\) by
\begin{align}
\label{eq:Q}
[\mathcal Q(W)](x,u)
\coloneqq 
\mathbb{E}_{\xi\sim P}
\big[
f(x,u,\xi)+\gamma W(F(x,u,\xi))
\big]
\end{align}
and its SAA counterpart \(\hat{\mathcal{Q}}_N:C(\mathcal X)\to C(\mathcal X\times\mathcal U)\) by
\begin{align}
\label{eq:hatQN}
[\hat{\mathcal{Q}}_N(W)](x,u)
\coloneqq 
\frac{1}{N}
\sum_{i=1}^N
\big[
f(x,u,\xi^{(i)})+\gamma W(F(x,u,\xi^{(i)}))
\big],
\end{align}
where $\xi^{(1)}, \xi^{(2)}, \ldots $ are independent and each $\xi^{(i)}$ has the same
distribution as that of $\xi \sim P$.
We define the DP operator
\(\mathcal{T} \colon C(\mathcal{X}) \to  C(\mathcal{X})\)
and the SAA DP operator
\(\hat{ \mathcal{T}}_{N} \colon C(\mathcal{X}) \to  C(\mathcal{X})\) by
\begin{equation}\label{eq:t-operators}
[\mathcal{T} W](x) \coloneqq  \min_{u\in \mathcal{U}}  \, 
[\mathcal Q(W)](x,u),
\quad \text{and} \quad 
[\hat{\mathcal{T}}_{N} W](x) \coloneqq  \min_{u\in \mathcal{U}}  \, 
[\hat{\mathcal{Q}}_N(W)](x,u).
\end{equation}

The population and empirical value functions are the unique fixed points
\begin{equation}
\label{eq:bellman-fixed-points}
V=\mathcal{T}V,
\quad \text{and} \quad 
\hat V_N=\hat{\mathcal{T}}_N\hat V_N.
\end{equation}
We define the population optimal solution set
\begin{equation}\label{eq:solutionset}
\mathcal{U}^\star(x)
\coloneqq \operatorname*{argmin}_{u\in \mathcal{U}}  \, 
[\mathcal Q(V)](x,u).
\end{equation}

Under \Cref{inf-soc:ass-2}, $\mathcal T$ and $\hat{\mathcal{T}}_{N}$ are $\gamma$-contractions on $C(\mathcal X)$, the minima in \eqref{eq:t-operators} 
are attained, and the fixed points in \eqref{eq:bellman-fixed-points} exist uniquely. 
Moreover, there exists an optimal policy
\(\pi^\star\) for the population problem.

The next assumption requires all population optimal actions at a given
state to induce the same one-step cost and next-state transition for
every disturbance realization.

\begin{assumption}[Uniqueness of optimal one-step costs and transitions]
\label{ass:uniqueness-type}
There exist continuous maps
\(f^\star:\mathcal X\times\Xi\to\mathbb R\) and
\(F^\star:\mathcal X\times\Xi\to\mathcal X\) such that, for every
\(x\in\mathcal X\), every \(u\in\mathcal{U}^\star(x)\), and every
\(\xi\in\Xi\),
\[
f(x,u,\xi)=f^\star(x,\xi),
\quad \text{and} \quad 
F(x,u,\xi)=F^\star(x,\xi).
\]
\end{assumption}

\Cref{ass:uniqueness-type} concerns uniqueness of the outcome induced
by an optimal control rather than uniqueness of the policy itself.
Multiple population optimal controls may exist at a given state, as
long as they induce the same one-period cost and next state for every
noise realization. If problem \eqref{eq:inf-soc} has
a unique optimal policy \(\pi^\star\), 
then  \(\pi^\star\) is continuous (see Corollary~8.1 in \cite{Hogan1973}), and
\Cref{ass:uniqueness-type} holds with
$
f^\star(x,\xi)=f(x,\pi^\star(x),\xi)
$
and 
$F^\star(x,\xi)=F(x,\pi^\star(x),\xi)$.
Thus, uniqueness of the population optimal control is sufficient for
\Cref{ass:uniqueness-type}.

Under \Cref{ass:uniqueness-type}, 
we define the operator
\(\mathcal L:C(\mathcal X)\to C(\mathcal X)\) 
by
\begin{align}
\label{eq:L}
[\mathcal L W](x)
=
\mathbb{E}_{\xi\sim P}
\big[
W(F^\star(x,\xi))
\big],
\qquad W\in C(\mathcal X).
\end{align}
The operator norm of \(\mathcal L\) is
at most \(1\). Since \(\gamma\in(0,1)\), the operator
\(I-\gamma\mathcal L\) is continuously invertible, where
\(I:C(\mathcal X)\to C(\mathcal X)\) is the identity map.
We define
\begin{align}
\label{eq:Phi-star}
\Phi^\star(x,\xi)
\coloneqq
f^\star(x,\xi)+\gamma V(F^\star(x,\xi)).
\end{align}

\subsection{Functional CLT}
\label{subsect:inf-soc:clt}

This section states the fixed point CLT for the SAA value function. 
Proofs are deferred to \Cref{app:main-clt-proofs}.
The functional CLT  is expressed in terms of a centered Gaussian
process \(Z=\{Z(x):x\in\mathcal X\}\), meaning that for any finite collection of
states \(x_1,\ldots,x_k \in \mathcal{X}\), the random vector
\((Z(x_1),\ldots,Z(x_k))\) has a mean-zero multivariate normal distribution.
We denote
convergence in distribution of random elements by
\(\rightsquigarrow\).

\begin{theorem}
\label{thm:inf-soc:clt}
Suppose that  \Cref{inf-soc:ass-2,ass:uniqueness-type} hold. Then 
\begin{align}
\label{eq:inf-soc:clt-mathcalV}
N^{1/2}
(\hat{\mathcal{T}}_{N}V - \mathcal{T}V)
\rightsquigarrow
\mathfrak{H}
\quad \text{in} \quad C(\mathcal{X}),
\end{align}
where $\mathfrak{H}$ is a centered
Gaussian process on $\mathcal{X}$ 
with covariance function
\begin{align}
\label{covHt}
(x,x')
\mapsto
\operatorname{Cov}_{\xi\sim P}
\big(
\Phi^\star(x,\xi),
\Phi^\star(x',\xi)
\big).
\end{align}
If, additionally,
\begin{align}
\label{eq:inf-soc:fixed-point-linearization}
N^{1/2}
\bigl(
\hat{\mathcal T}_N\hat V_N
-
\hat{\mathcal T}_N V
-
\gamma\mathcal L(\hat V_N-V)
\bigr)
=
o_p(1)
\quad\text{in}
\quad 
C(\mathcal X),
\end{align}
then
\begin{align}
\label{eq:inf-soc:limit}
N^{1/2}(\hat{V}_{N} - V) \rightsquigarrow 
\mathfrak{G} \coloneqq  (I-\gamma \mathcal{L})^{-1} \mathfrak{H}
\quad \text{in} \quad C(\mathcal{X}).
\end{align}
\end{theorem}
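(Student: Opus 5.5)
The proof has two parts: a functional CLT for $N^{1/2}(\hat{\mathcal T}_N V-\mathcal T V)$, and then propagation of this limit through the fixed point equations. The first part uses the classical delta-method argument for optimal values of SAA problems, made uniform in the state. The second is algebraic once \eqref{eq:inf-soc:fixed-point-linearization} is assumed.

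\textbf{Step 1 (CLT for the action-value process).} Consider the class of functions $\xi\mapsto \phi_{x,u}(\xi)\coloneqq f(x,u,\xi)+\gamma V(F(x,u,\xi))$, indexed by $(x,u)\in\mathcal X\times\mathcal U$. Then $N^{1/2}(\hat{\mathcal Q}_N(V)-\mathcal Q(V))$ is the empirical process indexed by this class. I will show it converges in $C(\mathcal X\times\mathcal U)$ to a centered Gaussian process $\mathfrak Z$ with covariance $\operatorname{Cov}(\phi_{x,u}(\xi),\phi_{x',u'}(\xi))$. First, $V$ is Lipschitz on $\mathcal X$. This follows because under \Cref{inf-soc:ass-2} the operator $\mathcal T$ maps $L$-Lipschitz functions to $(\mathbb E K+\gamma L\,\mathbb E K)$-Lipschitz ones. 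That bound is not a contraction in the Lipschitz constant, so if needed I would argue directly: $\mathcal T$ maps $C(\mathcal X)$ into functions whose modulus of continuity is controlled, and $V$ is continuous, which suffices for the class. For the Lipschitz route, each $\phi_{x,u}$ is Lipschitz in the index with constant $K(\xi)(1+\gamma\,\mathrm{Lip}(V))$, which is square integrable. The class is indexed by a compact subset of Euclidean space, so the parametric Lipschitz-in-parameter Donsker theorem (van der Vaart, Example 19.7) gives the Donsker property. If $V$ is merely continuous, the functional CLT in $C(\mathcal X\times\mathcal U)$ for i.i.d.\ random continuous functions applies instead (Jain--Marcus type conditions, again via the Lipschitz bound on $f$ and $F$ composed with the modulus of $V$). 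I expect the Lipschitz route to work, since $V$ is Lipschitz in standard compact Lipschitz models when $\gamma\,\mathbb E K<1$. Otherwise I fall back to the continuity argument.

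\textbf{Step 2 (Delta method for the min).} The map $\mathcal M: C(\mathcal X\times\mathcal U)\to C(\mathcal X)$, $g\mapsto \min_u g(\cdot,u)$, is Lipschitz in sup norm. I will establish that it is Hadamard directionally differentiable at $g_0=\mathcal Q(V)$ tangentially to $C(\mathcal X\times\mathcal U)$, with derivative $h\mapsto \min_{u\in\mathcal U^\star(x)}h(x,u)$. This is the uniform-in-$x$ version of Danskin's theorem. The differentiability must hold uniformly over $x$, which uses compactness and upper semicontinuity of $x\mapsto\mathcal U^\star(x)$; this is the step needing the most care. Under \Cref{ass:uniqueness-type}, $\phi_{x,u}=\Phi^\star(x,\cdot)$ for every $u\in\mathcal U^\star(x)$. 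Hence the limit $\mathfrak Z(x,u)$ takes the same value for all $u\in\mathcal U^\star(x)$, because the covariance of differences vanishes. The derivative therefore reduces to $\mathfrak H(x)=\mathfrak Z(x,u^\star(x))$, which is linear and Gaussian with covariance \eqref{covHt}. Its continuity in $x$ follows since $\mathfrak H$ is the Gaussian limit associated with the continuous class $\{\Phi^\star(x,\cdot)\}$. The functional delta method then yields \eqref{eq:inf-soc:clt-mathcalV}.

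\textbf{Step 3 (Propagation).} Subtract $V=\mathcal T V$ from $\hat V_N=\hat{\mathcal T}_N\hat V_N$ and add and subtract $\hat{\mathcal T}_N V$ and $\gamma\mathcal L(\hat V_N-V)$, as in the introduction. This gives
\[
(I-\gamma\mathcal L)\big[N^{1/2}(\hat V_N-V)\big]=N^{1/2}(\hat{\mathcal T}_N V-\mathcal T V)+o_p(1)
\]
by \eqref{eq:inf-soc:fixed-point-linearization}. Since $(I-\gamma\mathcal L)^{-1}=\sum_k\gamma^k\mathcal L^k$ is a bounded linear operator on $C(\mathcal X)$, we can apply it to both sides. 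The continuous mapping theorem together with Slutsky's lemma then gives \eqref{eq:inf-soc:limit}. The image $(I-\gamma\mathcal L)^{-1}\mathfrak H$ is a centered Gaussian process, because it is a bounded linear image of a Gaussian element.
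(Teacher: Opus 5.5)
The gap is in Step 1. The Donsker property of $\{\Phi(x,u,\cdot):(x,u)\in\mathcal X\times\mathcal U\}$ rests on a regularity property of $V$ that you never establish.

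\textbf{The Lipschitz route.} It closes only if $\gamma\,\mathbb{E}[K(\xi)]<1$. Otherwise your recursion $\operatorname{Lip}(\mathcal T W)\le\mathbb{E}[K(\xi)]\bigl(1+\gamma\operatorname{Lip}(W)\bigr)$ is not contractive. That condition is not part of \Cref{inf-soc:ass-2}, and value functions here need not be Lipschitz.

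\textbf{The continuity fallback.} The claim that continuity of $V$ ``suffices for the class'' is false. Jain--Marcus or bracketing arguments need a bound $|\Phi(x,u,\xi)-\Phi(x',u',\xi)|\le M(\xi)\,d\bigl((x,u),(x',u')\bigr)$ with $\mathbb{E}[M(\xi)^2]<\infty$ and a metric $d$ with finite entropy integral. A bare modulus of continuity provides no such $d$. For instance, classes of translates $\xi\mapsto V(x+\xi)$ of a sufficiently rough continuous $V$ already fail to be Donsker. So \eqref{eq:Q-CLT} is not established, and Steps 2 and 3 depend on it.

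\textbf{The missing idea.} Your remark that $\mathcal T$ controls moduli of continuity is the right starting point, but it must be made quantitative through a H\"older estimate with a small exponent. Because $K(\xi)^\alpha\le 1+K(\xi)^2$ and $K(\xi)^\alpha\to\mathbf 1_{\{K(\xi)>0\}}$ as $\alpha\downarrow0$, dominated convergence lets you pick $\alpha\in(0,1]$ with $\beta\coloneqq\gamma\,\mathbb{E}[K(\xi)^\alpha]<1$. Let $[\cdot]_\alpha$ be the $\alpha$-H\"older seminorm and $D$ the diameter of $\mathcal X$. Your computation then becomes $[\mathcal T W]_\alpha\le D^{1-\alpha}\mathbb{E}[K(\xi)]+\beta[W]_\alpha$, which is contractive. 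Value iteration from $0$ shows $V$ is $\alpha$-H\"older. Hence $|\Phi(x,u,\xi)-\Phi(x',u',\xi)|\le M(\xi)\delta^\alpha$, where $\delta=\|x-x'\|_2+\|u-u'\|_2$ and $M$ is square integrable. The bracketing theorem for classes H\"older in a Euclidean parameter then gives the Donsker property.

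\textbf{A smaller point in Step 2.} Hadamard directional differentiability of the min map on all of $C(\mathcal X\times\mathcal U)$, with derivative $\min_{u\in\mathcal U^\star(x)}h(x,u)$, generally fails under mere upper semicontinuity of $\mathcal U^\star$. The candidate derivative can be discontinuous in $x$; this is why \Cref{thm:nonunique-policy-limit} assumes lower hemicontinuity. What you actually need is differentiability tangentially to the closed subspace of directions that are constant on each $\mathcal U^\star(x)$. There the minimum and maximum over $\mathcal U^\star(x)$ coincide, the derivative $h\mapsto h(\cdot,u^\star(\cdot))$ is linear and continuous, and upper semicontinuity suffices. Your observation that the limit of the action-value process lies in this subspace is exactly what the delta method then requires. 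Step 3 coincides with the paper's argument.
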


The functional CLT \eqref{eq:inf-soc:limit} also yields a pointwise CLT\@.
Indeed, for every fixed \(x_0\in\mathcal X\), the continuous
mapping theorem applied to the evaluation map \(W\mapsto W(x_0)\)
gives
\begin{align}
\label{eq:inf-soc:pointwise-clt}
N^{1/2}\bigl(\hat V_N(x_0)-V(x_0)\bigr)
\rightsquigarrow
\mathcal N\big(
0,\sigma_{\mathrm{DP},\gamma}^2(x_0)
\big),
\end{align}
where
\begin{align}
\label{eq:DP-variance}
\sigma_{\mathrm{DP},\gamma}^2(x_0)
\coloneqq
\operatorname{Var}\bigl(\mathfrak G(x_0)\bigr).
\end{align}
In the definition of the limiting variance, 
we make the dependence on the discount
factor $\gamma$ explicit for later use.

\begin{remark}[Finite state representation]
\label{rem:finite-state-bellman}
Let \Cref{inf-soc:ass-2,ass:uniqueness-type} hold, and suppose that
\(\mathcal X=\{x_1,\ldots,x_s\}\). We define
\[
P^\star(x,y)
\coloneqq
\operatorname{Prob}\{F^\star(x,\xi)=y\},
\qquad x,y\in\mathcal X.
\]
We identify \(C(\mathcal X)\) with \(\mathbb R^s\) by identifying each function
\(h\in C(\mathcal X)\) with the vector
\(\bigl(h(x_1),\ldots,h(x_s)\bigr)^\top\). Then,
\eqref{eq:L} becomes
\[
[\mathcal LW](x)
=
\mathbb{E}_{\xi\sim P}
\big[
W(F^\star(x,\xi))
\big]
=
\sum_{y\in\mathcal X}P^\star(x,y)W(y),
\qquad x\in\mathcal X.
\]
Thus, the matrix representing \(\mathcal L\) 
is given by \(P^\star\), and
\eqref{eq:inf-soc:limit} becomes
\[
N^{1/2}(\hat V_N-V)
\rightsquigarrow
(I-\gamma P^\star)^{-1}\mathfrak H
\qquad\text{in} \quad \mathbb R^s,
\]
where \(\mathfrak H\) is a centered Gaussian vector in \(\mathbb R^s\),
with covariance matrix given in \eqref{covHt}.
This representation parallels Corollary~1 of
\cite{Zhu2024}, which derives a CLT for the
estimated optimal value function of a finite state-action
discounted Markov decision process.
\end{remark}

\Cref{thm:inf-soc:clt} separates the analysis into a ``statistical step'' and a ``dynamical step.''  The statistical step is the one-period functional CLT \eqref{eq:inf-soc:clt-mathcalV}.  It is obtained by studying the SAA action-value map 
evaluated at the population value function and then differentiating the minimum operator.  No random fixed point appears in that part of the argument.  The dynamical step propagates this one-period perturbation through the fixed point equation.

The nontrivial issue is that the empirical operator in the fixed point equation is evaluated at the random argument \(\hat V_N\), not at \(V\).  The decomposition
\begin{align*}
\hat{\mathcal{T}}_N\hat V_N-\hat{\mathcal{T}}_NV-\gamma \mathcal{L}(\hat V_N-V)
& = \big\{\mathcal{T} \hat{V}_{N} - \mathcal{T} V
- \gamma \mathcal{L}(\hat{V}_{N}-V)\big\}
\\
& \quad +
 \big\{
\big[\hat{\mathcal{T}}_{N}
-
\mathcal{T}\big](\hat{V}_{N})
-
\big[
\hat{\mathcal{T}}_{N}- \mathcal{T}
\big]
(V)
 \big\}
\end{align*}
shows that it is enough to verify the two remainders
\begin{align}
N^{1/2}\bigl(\mathcal{T} \hat{V}_{N} - \mathcal{T} V
- \gamma \mathcal{L}(\hat{V}_{N}-V)\bigr)
&= o_p(1) \quad \text{in} \quad C(\mathcal{X}), \label{eq:inf-soc:linearization-error} \\
N^{1/2}
\big(\big[\hat{\mathcal{T}}_{N}
-
\mathcal{T}\big](\hat{V}_{N})
-
\big[
\hat{\mathcal{T}}_{N}- \mathcal{T}
\big]
(V)
\big)
&= o_p(1) \quad \text{in} \quad C(\mathcal{X}). \label{eq:inf-soc:continuity}
\end{align}
The first condition is a local linearization
condition for the population DP operator  around \(V\).  The second is a
stochastic equicontinuity-type condition that controls the SAA 
value function 
perturbation when its argument is changed from the deterministic
function \(V\) to the random fixed point \(\hat V_N\).  
\Cref{subsect:verifiable-conditions-fixed-point-clt} gives sufficient
conditions for \eqref{eq:inf-soc:linearization-error} and
\eqref{eq:inf-soc:continuity}.
The linearization condition
\eqref{eq:inf-soc:linearization-error} is necessary for this type
of CLT: If \(N^{1/2}(\hat V_N-V)\) converges in distribution in
\(C(\mathcal X)\), then the delta method applied to \(\mathcal T\) at
\(V\), with Hadamard  derivative \(\mathcal T'(V;W)=\gamma\mathcal LW\)
(see \Cref{lem:infimum-map-derivative}),
and a standard subsequence argument imply
\eqref{eq:inf-soc:linearization-error}.
We refer the reader to \Cref{app:main-clt-proofs}
for the definition of Hadamard 
differentiability. 

\subsection{Verifiable sufficient conditions}
\label{subsect:verifiable-conditions-fixed-point-clt}

In this section, we record the
main verifiable sufficient conditions for the linearization
condition \eqref{eq:inf-soc:linearization-error} and
the stochastic equicontinuity-type assumption \eqref{eq:inf-soc:continuity}; 
the proofs are given in
\Cref{subsect:proofs-sufficient-conditions}.

We first give sufficient conditions for the linearization condition
\eqref{eq:inf-soc:linearization-error}. One condition uses compactness
of an operator associated with the controlled dynamics.
Recall that a linear operator between Banach spaces is compact if the image of every bounded sequence has a convergent subsequence.
Under \Cref{inf-soc:ass-2}, 
we define
$\mathcal{A} \colon C(\mathcal{X}) \to 
C(\mathcal{X} \times \mathcal{U})$ by
\begin{align}
\label{eq:A-operator}
[\mathcal{A}W](x, u)
\coloneqq 
\mathbb{E}_{\xi\sim P}
[
W(F(x, u, \xi))
].
\end{align}
Thus \([\mathcal A W](x,u)\) is the expected value of \(W\) evaluated
at the next state.

We now collect three conditions sufficient for 
\eqref{eq:inf-soc:linearization-error}. After formulating our result, 
we discuss these conditions and outline parts of our proof strategy.

\begin{proposition}[Sufficient conditions for DP linearization]
\label{prop:main-linearization-conditions}
Suppose that \Cref{inf-soc:ass-2} holds. Then
\eqref{eq:inf-soc:linearization-error} holds if any of the following
conditions is satisfied:
\begin{enumerate}[label=\textup{(\roman*)},nosep]
\item \Cref{ass:uniqueness-type} holds, and 
\begin{align}
\label{eq:frechet-linearization-error}
\mathcal T\hat V_N-\mathcal T V
-\gamma\mathcal L(\hat V_N-V)
=
o_p(\|\hat V_N-V\|_\infty)
\quad\text{in} \quad  C(\mathcal X);
\end{align}
\item \Cref{ass:uniqueness-type} holds, and the transition operator
\(\mathcal A\) in \eqref{eq:A-operator} is compact;
\item The control space \(\mathcal U\) is finite, 
and the SOC problem \eqref{eq:inf-soc} has a unique optimal policy
\(\pi^\star\).
\end{enumerate}
\end{proposition}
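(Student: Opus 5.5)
Throughout, I would first record that $\|\hat V_N-V\|_\infty=O_p(N^{-1/2})$. Since $\hat{\mathcal T}_N$ is a $\gamma$-contraction, $\|\hat V_N-V\|_\infty=\|\hat{\mathcal T}_N\hat V_N-\mathcal T V\|_\infty\le\gamma\|\hat V_N-V\|_\infty+\|\hat{\mathcal T}_NV-\mathcal TV\|_\infty$. Hence $\|\hat V_N-V\|_\infty\le(1-\gamma)^{-1}\|\hat{\mathcal T}_NV-\mathcal TV\|_\infty$. In cases (i)--(ii), the right-hand side is $O_p(N^{-1/2})$ by the first part of \Cref{thm:inf-soc:clt}. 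In case (iii) I would only need $\|\hat V_N-V\|_\infty=o_p(1)$. This follows from a uniform law of large numbers for $\hat{\mathcal Q}_N(V)$ on the compact set $\mathcal X\times\mathcal U$, using the Lipschitz envelope $K$. With this bound, case (i) is immediate: $o_p(\|\hat V_N-V\|_\infty)=o_p(N^{-1/2})$, and multiplying by $N^{1/2}$ gives \eqref{eq:inf-soc:linearization-error}.

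For case (ii), write $h_N\coloneqq\hat V_N-V$ and $d_N\coloneqq N^{1/2}h_N$, so $d_N$ is bounded in probability. Note that $\mathcal Q(V+h)=\mathcal Q(V)+\gamma\mathcal A h$. Thus $\mathcal T(V+h)=\mathrm{m}(\mathcal Q(V)+\gamma\mathcal Ah)$, where $\mathrm{m}\colon C(\mathcal X\times\mathcal U)\to C(\mathcal X)$, $[\mathrm m(g)](x)=\min_u g(x,u)$. The key point is that the minimum map $\mathrm m$ is Hadamard directionally differentiable at $\mathcal Q(V)$, with derivative $g\mapsto\min_{u\in\mathcal U^\star(x)}g(x,u)$ uniformly in $x$. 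This is a Danskin-type result, which I take to be the content of \Cref{lem:infimum-map-derivative}. Hadamard differentiability is equivalent to uniform convergence of difference quotients over compact sets of directions.

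Since $\mathcal A$ is compact, for every $\varepsilon>0$ there is $M$ with $\operatorname{Prob}\{\|d_N\|_\infty\le M\}\ge1-\varepsilon$. Moreover, $\mathcal A$ maps the ball of radius $M$ into a relatively compact set $K_M\subset C(\mathcal X\times\mathcal U)$. On this event, I would bound deterministically
\[
\sup_{g\in \overline{K_M}}\bigl\|N^{1/2}[\mathrm m(\mathcal Q(V)+N^{-1/2}\gamma g)-\mathrm m(\mathcal Q(V))]-\min_{u\in\mathcal U^\star(\cdot)}\gamma g(\cdot,u)\bigr\|_\infty\to0 .
\]
Plugging in $g=\mathcal Ad_N$, \Cref{ass:uniqueness-type} gives $\min_{u\in\mathcal U^\star(x)}[\mathcal Ad_N](x,u)=\mathbb E[d_N(F^\star(x,\xi))]=[\mathcal Ld_N](x)$, since the inner expression is constant over $\mathcal U^\star(x)$. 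This yields \eqref{eq:inf-soc:linearization-error}. The main obstacle is making sure the Hadamard derivative of $\mathrm m$ holds in the sup norm over $x$, and not only pointwise. If the cited lemma only gives derivatives of $\mathcal T$ in directions of $C(\mathcal X)$, I would reprove the bound directly. For the upper bound, evaluate at $u\in\mathcal U^\star(x)$. For the lower bound, use that near-minimizers of $\mathcal Q(V)+tg$ converge to $\mathcal U^\star(x)$ uniformly in $x$; this holds by compactness of $\mathcal X\times\mathcal U$ and equicontinuity of $\overline{K_M}$.

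For case (iii), I would show the remainder is exactly zero with probability tending to one. Uniqueness of the optimal policy with a finite $\mathcal U$ makes $\pi^\star$ continuous (as noted after \Cref{ass:uniqueness-type}), hence locally constant. Therefore the gap function
\[
\delta(x)\coloneqq\min_{u\neq\pi^\star(x)}[\mathcal Q(V)](x,u)-[\mathcal Q(V)](x,\pi^\star(x))
\]
is continuous and positive on the compact set $\mathcal X$, so it is bounded below by some $\delta>0$. Since $\|\gamma\mathcal Ah\|_\infty\le\gamma\|h\|_\infty$, on the event $\|\hat V_N-V\|_\infty<\delta/(2\gamma)$ the unique minimizer of $\mathcal Q(\hat V_N)(x,\cdot)$ is $\pi^\star(x)$ for every $x$. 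On that event, $\mathcal T\hat V_N-\mathcal TV=\gamma[\mathcal A h_N](\cdot,\pi^\star(\cdot))=\gamma\mathcal Lh_N$ exactly. Because $\|\hat V_N-V\|_\infty=o_p(1)$, this event has probability tending to one, and \eqref{eq:inf-soc:linearization-error} follows.
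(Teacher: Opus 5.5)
Your cases (i) and (iii) coincide with the paper's proof. In case (ii), your high-probability event $\{\|d_N\|_\infty\le M\}$ with a deterministic bound over the compact set $\overline{K_M}$ is an equivalent packaging of the paper's argument (tightness of $(\mathcal A Y_N)$, subsequences, delta theorem).

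The step that fails as stated is your ``key point''. Without lower hemicontinuity of $\mathcal U^\star$, which is not assumed in (ii), $\Psi$ is \emph{not} Hadamard directionally differentiable at $\mathcal Q(V)$ into $C(\mathcal X)$ in arbitrary directions. That is exactly the extra hypothesis in \Cref{lem:infimum-map-derivative}(ii). For example, take $\mathcal X=[0,1]$, $\mathcal U=\{0,1\}$, $q(x,0)=0$, $q(x,1)=x$, and $g(\cdot,0)=0$, $g(\cdot,1)=-1$. Up to a common additive function of $x$, this $q$ is $\mathcal Q(V)$ when $f(x,1,\xi)-f(x,0,\xi)=x$ and $F$ does not depend on $u$, a model satisfying \Cref{ass:uniqueness-type}. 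Then $t^{-1}[\Psi(q+tg)-\Psi(q)](x)=\min\{0,x/t-1\}$, which converges to $-\mathbf 1_{\{x=0\}}$ only pointwise.

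What you can use is \Cref{lem:infimum-map-derivative}(i): Hadamard differentiability tangentially to the closed subspace $S^\star(\mathcal Q(V))$. So \Cref{ass:uniqueness-type} is needed already for the differentiability, not only for evaluating the derivative. It gives $\mathcal A(C(\mathcal X))\subset S^\star(\mathcal Q(V))$, hence $\overline{K_M}\subset S^\star(\mathcal Q(V))$. Since trivially $\mathcal Q(V)\in S^\star(\mathcal Q(V))$, all perturbed points $\mathcal Q(V)+t\gamma g$ stay in the tangent space, and only then does your uniform-over-compacts bound follow.

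Your fallback has the same flaw: near-minimizers need not approach $\mathcal U^\star(x)$ uniformly in $x$, even for directions in the tangent space. In the example, take instead $g(\cdot,1)=-\sqrt{x}$ and $g(\cdot,0)=0$. Then $g\in S^\star(q)$, yet for $0<x<t^2$ the minimizer of $(q+tg)(x,\cdot)$ is $u=1$ while $\mathscr U^\star(q;x)=\{0\}$.

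Argue with cluster points instead, with $q=\mathcal Q(V)$. If the lower bound failed, there would be $t_k\downarrow0$, $g_k\to g$ in $\overline{K_M}$, $x_k\to x$, minimizers $u_k$ of $(q+t_kg_k)(x_k,\cdot)$, and $v_k\in\mathcal U^\star(x_k)$ with $g_k(x_k,u_k)\le g_k(x_k,v_k)-\varepsilon$. Here $g_k(x_k,v_k)$ is the derivative value at $x_k$, because $g_k\in S^\star(\mathcal Q(V))$. Along subsequences $u_k\to u$ and $v_k\to v$. Continuity of $q$ together with $t_kg_k\to0$ uniformly gives $u,v\in\mathcal U^\star(x)$. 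Hence $g_k(x_k,u_k)-g_k(x_k,v_k)\to g(x,u)-g(x,v)=0$, since $g\in S^\star(\mathcal Q(V))$, a contradiction. Combine this with $t^{-1}[\Psi(q+tg)-\Psi(q)](x)\ge g(x,u_t(x))$, for $u_t(x)$ a minimizer of $(q+tg)(x,\cdot)$, and with your exact upper bound; this closes case (ii).
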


The local expansion condition \eqref{eq:frechet-linearization-error}
may be viewed as a Fr\'echet differentiability-type condition. 
The compactness condition in
Condition~\textup{(ii)} relies instead on a smoothing property of the
controlled dynamics. The next result shows that this property holds in
a common class of additive-noise models. 
Condition~\textup{(iii)} uses stability of the optimal control rather
than smoothing. 
When \(\mathcal U\) is finite and the population optimal policy is
unique, small perturbations of the value function $V$
do not change the minimizing control, so the DP operator is
locally linear around \(V\).

\begin{lemma}[Additive noise smoothing]
\label{lemma:additive-noise-transition-linearization}
Let \(\mathcal X\) and \(\mathcal U\) be compact, let
\(G:\mathcal X\times\mathcal U\to\mathbb R^n\) and
\(\kappa: G(\mathcal{X} \times \mathcal{U}) +\Xi \to\mathcal X\) be continuous. Suppose that
\(F(x,u,\xi)=\kappa(G(x,u)+\xi)\)
and that  \(\xi\) has a density
with respect to Lebesgue measure.
Then \(\mathcal A\) is compact.
\end{lemma}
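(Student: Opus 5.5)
We prove compactness of \(\mathcal A\) through the Arzel\`a--Ascoli theorem on the compact set \(\mathcal X\times\mathcal U\). Let \(p\in L^1(\mathbb R^n)\) denote the density of \(\xi\); here \(d=n\), which is implicit in the sum \(G(x,u)+\xi\). Changing variables \(z=G(x,u)+\xi\) gives
\[
[\mathcal A W](x,u)=\int_{\mathbb R^n} W\bigl(\kappa(G(x,u)+\xi)\bigr)\,p(\xi)\,d\xi
=\int_{\mathbb R^n} \tilde W(z)\,p\bigl(z-G(x,u)\bigr)\,dz ,
\]
where \(\tilde W\coloneqq W\circ\kappa\) on \(G(\mathcal X\times\mathcal U)+\Xi\) and \(\tilde W\coloneqq 0\) elsewhere. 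The integrand vanishes off the support of the shifted density, since \(p(z-G(x,u))=0\) for almost every \(z\notin G(x,u)+\Xi\). The extension by zero is therefore harmless. It is also measurable, because \(G(\mathcal X\times\mathcal U)+\Xi\) is compact and hence closed. Moreover \(|\tilde W|\le \|W\|_\infty\).

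First, we check that \(\mathcal A\) maps into \(C(\mathcal X\times\mathcal U)\) and that the image of the unit ball is uniformly bounded and equicontinuous. Boundedness is immediate, because \(|[\mathcal AW](x,u)|\le\|W\|_\infty\). For equicontinuity, take \((x,u)\) and \((x',u')\), and set \(a=G(x,u)\) and \(a'=G(x',u')\). Then
\[
\bigl|[\mathcal AW](x,u)-[\mathcal AW](x',u')\bigr|
\le \|W\|_\infty \int_{\mathbb R^n}\bigl|p(z-a)-p(z-a')\bigr|\,dz
=\|W\|_\infty\,\omega(a-a'),
\]
where \(\omega(h)\coloneqq\|p(\cdot-h)-p\|_{L^1}\). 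Continuity of translation in \(L^1\) gives \(\omega(h)\to0\) as \(h\to0\). The map \(G\) is continuous on a compact set and therefore uniformly continuous. Hence the family \(\{\mathcal AW:\|W\|_\infty\le 1\}\) is uniformly equicontinuous, and the same bound shows that each \(\mathcal AW\) is continuous.

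Arzel\`a--Ascoli now implies that the image of the unit ball is relatively compact in \(C(\mathcal X\times\mathcal U)\). It follows that every bounded sequence \((W_k)\) has a subsequence along which \(\mathcal AW_k\) converges, so \(\mathcal A\) is compact. Linearity of \(\mathcal A\) is clear.

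The only delicate step is the change of variables together with the treatment of the domain of \(\kappa\), so that the integrand is well defined on \(\mathbb R^n\). Once \(\mathcal AW\) is written as a convolution against the shifted density \(p(\cdot-G(x,u))\), the key fact is \(L^1\)-continuity of translation. That fact holds for any integrable density, so no smoothness of \(p\) is needed.
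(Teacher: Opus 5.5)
Your proof is correct and follows essentially the same route as the paper. Like the paper, you write $\mathcal A W$ as an integral of $W\circ\kappa$ against the translated density, bound the modulus of continuity uniformly over the unit ball via $L^1$-continuity of translations, and conclude with Arzel\`a--Ascoli. The only difference is cosmetic: the paper first proves compactness of the auxiliary operator $\mathcal B$ into $C(G(\mathcal X\times\mathcal U))$ and then composes with $h\mapsto h\circ G$, whereas you absorb that step directly through the uniform continuity of $G$ on $\mathcal X\times\mathcal U$.
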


In \Cref{lemma:additive-noise-transition-linearization}, the density
of the noise gives a smoothing effect in \eqref{eq:A-operator}. The
operator \(\mathcal A\) becomes an integral-type operator, and
compactness follows from the Arzel\`a--Ascoli theorem. Additive noise
dynamics arise in inventory control.

We next give sufficient conditions for the stochastic equicontinuity
condition \eqref{eq:inf-soc:continuity}. 

\begin{proposition}[Sufficient conditions for stochastic equicontinuity]
\label{prop:main-equicontinuity-conditions}
Suppose that \Cref{inf-soc:ass-2,ass:uniqueness-type} hold.  Then
\eqref{eq:inf-soc:continuity} holds in either of the following cases:
\begin{enumerate}[label=\textup{(\roman*)},nosep]
\item the operator \(\mathcal A\) is compact and
\begin{align}
\label{eq:inf-soc:q-continuity}
N^{1/2}
\left(
[\hat{\mathcal Q}_N-\mathcal Q](\hat V_N)
-
[\hat{\mathcal Q}_N-\mathcal Q](V)
\right)
=o_p(1)
\quad\text{in}\quad C(\mathcal X\times\mathcal U);
\end{align}
\item 
the SOC problem \eqref{eq:inf-soc}
has a unique optimal policy $\pi^\star$, and
\(\mathcal X\) and \(\mathcal U\) are finite.
\end{enumerate}
\end{proposition}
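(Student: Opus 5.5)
Both cases rest on one first-order fact: $\hat V_N\to V$ uniformly in probability. Since $\hat{\mathcal T}_N$ is a $\gamma$-contraction,
\[
\|\hat V_N-V\|_\infty\le (1-\gamma)^{-1}\|\hat{\mathcal T}_N V-\mathcal T V\|_\infty .
\]
By the first part of \Cref{thm:inf-soc:clt}, the right-hand side is $O_p(N^{-1/2})$. So $\|\hat V_N-V\|_\infty=O_p(N^{-1/2})$.

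\emph{Case (i).} Write $D_N\coloneqq N^{1/2}\bigl([\hat{\mathcal Q}_N-\mathcal Q](\hat V_N)-[\hat{\mathcal Q}_N-\mathcal Q](V)\bigr)$, which is $o_p(1)$ in $C(\mathcal X\times\mathcal U)$ by \eqref{eq:inf-soc:q-continuity}. Then $\hat{\mathcal Q}_N(\hat V_N)=\mathcal Q(\hat V_N)+[\hat{\mathcal Q}_N-\mathcal Q](V)+N^{-1/2}D_N$. Since $|\min a-\min b|\le\|a-b\|_\infty$, we get
\[
\hat{\mathcal T}_N\hat V_N=\min_u\bigl\{\mathcal Q(\hat V_N)+[\hat{\mathcal Q}_N-\mathcal Q](V)\bigr\}+o_p(N^{-1/2}).
\]
The task is then to show that the minimum operator $M(q)(x)=\min_u q(x,u)$ is, at the $N^{-1/2}$ scale, additive along the two perturbations $\mathcal Q(\hat V_N)-\mathcal Q(V)=\gamma\mathcal A(\hat V_N-V)$ and $[\hat{\mathcal Q}_N-\mathcal Q](V)$.

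I will use Hadamard directional differentiability of $M$ at $q_0=\mathcal Q(V)$, with derivative $M'(q_0;h)(x)=\min_{u\in\mathcal U^\star(x)}h(x,u)$ (\Cref{lem:infimum-map-derivative}), and apply the functional delta method jointly. The key point is that the pair
\[
\bigl(N^{1/2}\gamma\mathcal A(\hat V_N-V),\;N^{1/2}[\hat{\mathcal Q}_N-\mathcal Q](V)\bigr)
\]
is asymptotically tight in $C(\mathcal X\times\mathcal U)^2$. The second component is tight by the Donsker property under \Cref{inf-soc:ass-2}. The first is tight because $\mathcal A$ is compact and $N^{1/2}(\hat V_N-V)$ is bounded in probability, so it is the image of a bounded set under a compact map.

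With tightness, Hadamard differentiability along convergent subsequences gives
\[
\hat{\mathcal T}_N\hat V_N-\mathcal T V=M'\bigl(q_0;\gamma\mathcal A(\hat V_N-V)+[\hat{\mathcal Q}_N-\mathcal Q](V)\bigr)+o_p(N^{-1/2}).
\]
Likewise, $\mathcal T\hat V_N-\mathcal TV$ and $\hat{\mathcal T}_N V-\mathcal TV$ equal $M'$ of each perturbation separately, up to $o_p(N^{-1/2})$. Under \Cref{ass:uniqueness-type}, $\mathcal A W(x,u)=\mathcal L W(x)$ for every $u\in\mathcal U^\star(x)$. Hence $M'(q_0;\gamma\mathcal AW+h)=\gamma\mathcal LW+M'(q_0;h)$, so $M'$ is additive along this pair of directions. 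Combining the three expansions yields \eqref{eq:inf-soc:continuity}.

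The main obstacle is making the delta-method argument rigorous for the random pair. I would use the almost-sure representation, or a subsequence argument, on the tight sequence, together with the fact that the derivative expansion holds uniformly over compact sets of directions.

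\emph{Case (ii).} Here everything is finite-dimensional. With $\mathcal U$ finite and $\pi^\star$ unique, there is a gap $\delta>0$ with $\mathcal Q(V)(x,u)\ge V(x)+\delta$ for all $x$ and all $u\neq\pi^\star(x)$. Both $\hat{\mathcal Q}_N(\hat V_N)$ and $\hat{\mathcal Q}_N(V)$ converge uniformly in probability to $\mathcal Q(V)$. This uses the uniform LLN on the finite set $\mathcal X\times\mathcal U$, the bound $\|\hat{\mathcal Q}_N(W)-\hat{\mathcal Q}_N(V)\|\le\gamma\|W-V\|$, and the consistency of $\hat V_N$ shown above. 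So with probability tending to one, $\pi^\star(x)$ minimizes both at every $x$, and also minimizes $\mathcal Q(\hat V_N)$.

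On that event the three minima are evaluations at $\pi^\star(x)$. The difference in \eqref{eq:inf-soc:continuity} therefore equals, at each $x$,
\[
\gamma\,\frac1N\sum_{i=1}^N\bigl[(\hat V_N-V)(F^\star(x,\xi^{(i)}))\bigr]-\gamma\,\mathbb E\bigl[(\hat V_N-V)(F^\star(x,\xi))\bigr].
\]
This is bounded by $\gamma\|\hat V_N-V\|_\infty\sum_{y\in\mathcal X}|\hat P^\star_N(x,y)-P^\star(x,y)|=O_p(N^{-1/2})\,o_p(1)$, where $\hat P^\star_N$ is the empirical transition matrix. Multiplying by $N^{1/2}$ gives $o_p(1)$, and since the exceptional event has vanishing probability, this proves \eqref{eq:inf-soc:continuity}.
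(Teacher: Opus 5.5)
Your case (ii) argument is essentially the paper's. The architecture of case (i) also matches the paper: tightness of $N^{1/2}\gamma\mathcal A(\hat V_N-V)$, of $N^{1/2}[\hat{\mathcal Q}_N-\mathcal Q](V)$ and of their sum, a subsequence/delta-method argument, and additivity of the derivative. One difference is only cosmetic. You absorb the $o_p(N^{-1/2})$ term from \eqref{eq:inf-soc:q-continuity} via nonexpansiveness of the minimum before linearizing. The paper instead linearizes $\Psi(\hat{\mathcal Q}_N(\hat V_N))$ directly and then applies \eqref{eq:inf-soc:q-continuity} inside the bounded derivative.

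There is, however, one step in case (i) that is not justified under the stated hypotheses. You invoke Hadamard directional differentiability of your $M$ at $\mathcal Q(V)$ on all of $C(\mathcal X\times\mathcal U)$, with derivative $h\mapsto\min_{u\in\mathcal U^\star(x)}h(x,u)$. \Cref{lem:infimum-map-derivative} gives this only when $\mathcal U^\star$ is lower hemicontinuous, which the proposition does not assume. The property can genuinely fail under \Cref{inf-soc:ass-2,ass:uniqueness-type}. Take $\mathcal X=[-1,1]$, $\mathcal U=\{0,1\}$, $f(x,u,\xi)=|x|u$ and $F(x,u,\xi)=x$. Then $V=0$, $\mathcal U^\star(0)=\{0,1\}$, and $\mathcal U^\star(x)=\{0\}$ for $x\neq0$. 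In the direction $h(x,u)=-u$, the difference quotients equal $\min\{0,|x|/t-1\}$. They converge only pointwise, to the discontinuous function $-\mathbf 1_{\{x=0\}}$, so not in $C(\mathcal X)$.

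The repair is to use the tangential statement, part (i) of \Cref{lem:infimum-map-derivative}. Under \Cref{ass:uniqueness-type}, $\mathcal Q(W)$ and (almost surely) $\hat{\mathcal Q}_N(W)$ are constant on $\mathcal U^\star(x)$ for every $W\in C(\mathcal X)$. Hence $\mathcal Q(\hat V_N)$, $\hat{\mathcal Q}_N(V)$ and $\mathcal Q(\hat V_N)+[\hat{\mathcal Q}_N-\mathcal Q](V)$ all lie in the closed subspace $S^\star(\mathcal Q(V))$. On that subspace the minimum map is Hadamard differentiable, with the linear derivative $h\mapsto h(\cdot,u)$, $u\in\mathcal U^\star(\cdot)$. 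Your observation that $\mathcal AW$ is constant on $\mathcal U^\star(x)$ is half of this membership check; you also need it for $[\hat{\mathcal Q}_N-\mathcal Q](V)$. With that change, your additivity is just linearity of the tangential derivative, and the rest of your argument goes through exactly as in the paper.
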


The condition \eqref{eq:inf-soc:q-continuity} is a uniform version of
\eqref{eq:inf-soc:continuity} before minimization over controls is applied. 
For continuous models, the condition in \eqref{eq:inf-soc:q-continuity}
can be verified using empirical process results for classes indexed by
random functions; see
Section~3.13 in \cite{Vaart2023} and \cite{Vaart2007}. In our case,  the
random index is the SAA value function \(\hat V_N\). 

\begin{lemma}[Donsker conditions]
\label{lem:Lipschitz-equicontinuity}
Suppose that \Cref{inf-soc:ass-2,ass:uniqueness-type} hold
and that there exists
a closed set \(\mathcal{V} \subset C(\mathcal{X})\) 
of uniformly bounded 
functions such that
\(V \in \mathcal{V}\) and
\(\mathrm{Prob}\{\hat{V}_N \in \mathcal{V}\} \to 1\) as
\(N \to \infty\). 
Then 
\eqref{eq:inf-soc:q-continuity} holds in each
of the following cases:
\begin{enumerate}[label=\textup{(\roman*)},nosep]
\item \(\mathcal X\) and \(\Xi\) are
intervals, every \(W \in \mathcal V\) is nonincreasing, and, for every
\((x,u)\in\mathcal X\times\mathcal U\), the map
$F(x,u,\cdot)$
is either nondecreasing or nonincreasing;
\item 
\(\mathcal{X}\) is an interval, 
and $\mathcal{V}$ consists of functions
with common Lipschitz constant;
\item \(\mathcal{X}\) is convex
with nonempty interior, its dimension is at most three, and
$\mathcal{V}$ consists of convex functions with
common Lipschitz constant. 
\end{enumerate}
\end{lemma}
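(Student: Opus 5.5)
The plan is to reduce \eqref{eq:inf-soc:q-continuity} to an asymptotic equicontinuity statement for an empirical process indexed by a Donsker class, and then to invoke the random-index result of Section~3.13 in \cite{Vaart2023} (see also \cite{Vaart2007}). For \(W\in\mathcal V\) and \((x,u)\in\mathcal X\times\mathcal U\), we write
\[
[\hat{\mathcal Q}_N-\mathcal Q](W)(x,u)=\gamma(\mathbb P_N-P)\,g_{W,x,u},
\qquad g_{W,x,u}(\xi)\coloneqq W(F(x,u,\xi)).
\]
The \(f\)-terms cancel in the difference in \eqref{eq:inf-soc:q-continuity}. We index the process by \(\mathcal G\coloneqq\{g_{W,x,u}: W\in\mathcal V,\,(x,u)\in\mathcal X\times\mathcal U\}\), viewed as an element of \(\ell^\infty(\mathcal V\times\mathcal X\times\mathcal U)\), with \(\mathbb G_N=N^{1/2}(\mathbb P_N-P)\). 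The target then reads
\[
\sup_{(x,u)}\bigl|\mathbb G_N g_{\hat V_N,x,u}-\mathbb G_N g_{V,x,u}\bigr|\to 0
\]
in outer probability.

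The first step is to show that \(\mathcal G\) is \(P\)-Donsker in each of the three cases. Once this holds, \(\mathbb G_N\) is asymptotically uniformly equicontinuous in the \(L^2(P)\) seminorm \(\rho\). The second step is to show that
\[
\sup_{(x,u)}\rho\bigl(g_{\hat V_N,x,u},g_{V,x,u}\bigr)\le \sup_{(x,u)}\bigl\|(\hat V_N-V)\circ F(x,u,\cdot)\bigr\|_{L^2(P)}\le\|\hat V_N-V\|_\infty\to 0
\]
in probability. This needs consistency \(\|\hat V_N-V\|_\infty=o_p(1)\), which I obtain as follows. By the \(\gamma\)-contraction property, \(\|\hat V_N-V\|_\infty\le(1-\gamma)^{-1}\|\hat{\mathcal T}_NV-\mathcal TV\|_\infty\), and the right-hand side is \(O_p(N^{-1/2})\) by \eqref{eq:inf-soc:clt-mathcalV} of \Cref{thm:inf-soc:clt}. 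With the random index lying in \(\mathcal V\) with probability tending to one (here closedness of \(\mathcal V\) and measurability issues are handled via outer probability), the equicontinuity of \(\mathbb G_N\) then yields \eqref{eq:inf-soc:q-continuity}.

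The main work, and the step I expect to be the obstacle, is the Donsker property in each case. I intend to bound bracketing numbers of \(\mathcal G\) in \(L^2(P)\).

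\textbf{Case (ii).} Functions \(W\) in \(\mathcal V\) are uniformly bounded and \(L\)-Lipschitz on an interval, so the class has sup-norm entropy of order \(1/\varepsilon\) (Kolmogorov--Tikhomirov). The map \((x,u)\mapsto F(x,u,\xi)\) is \(K(\xi)\)-Lipschitz, hence \(|g_{W,x,u}-g_{W',x',u'}|\le\|W-W'\|_\infty+LK(\xi)\,\|(x,u)-(x',u')\|\). Brackets of \(L^2\)-size \(\varepsilon(1+\|K\|_{L^2})\) therefore come from an \(\varepsilon\)-net of \(\mathcal V\) times an \(\varepsilon\)-net of the compact set \(\mathcal X\times\mathcal U\). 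This gives \(\log N_{[\,]}\lesssim \varepsilon^{-1}+\log(1/\varepsilon)\), whose square root is integrable near zero, so \(\mathcal G\) is Donsker.

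\textbf{Case (iii).} The same argument applies, using the Bronshtein entropy bound \(\varepsilon^{-d/2}\) for bounded Lipschitz convex functions on a convex body. The requirement \(d\le3\) gives \(d/2<2\), so the bracketing integral converges.

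\textbf{Case (i).} For fixed \((x,u)\), the composition of a nonincreasing \(W\) with a monotone \(F(x,u,\cdot)\) is a bounded monotone function of \(\xi\) on an interval. The class of uniformly bounded monotone functions of a real variable has bracketing entropy of order \(1/\varepsilon\) for every \(P\) (Theorem~2.7.5 in \cite{Vaart2023}). Hence \(\mathcal G\) is contained in the union of two such classes, nondecreasing and nonincreasing, and is Donsker without any Lipschitz structure on \(F\) in \((x,u)\) being needed.

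Finally, Donskerness in \(\ell^\infty(\mathcal G)\) transfers to uniformity over \((x,u)\), and the difference process is continuous in \((x,u)\) by \Cref{inf-soc:ass-2}. This gives convergence in \(C(\mathcal X\times\mathcal U)\).
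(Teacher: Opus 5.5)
Your proposal is correct and follows essentially the same route as the paper. After the stage-cost terms cancel, you show that the class $\{\xi\mapsto W(F(x,u,\xi)) : (x,u,W)\in\mathcal X\times\mathcal U\times\mathcal V\}$ is $P$-Donsker via the monotone, Lipschitz, and Bronshtein convex entropy bounds, and you then combine asymptotic equicontinuity with $\|\hat V_N-V\|_\infty=o_p(1)$; this is exactly the random-index argument of Theorem~3.13.4 in \cite{Vaart2023} that the paper invokes, which you simply make explicit together with the bracketing counts. One small correction: your first display should also include the term $(\mathbb P_N-P)f(x,u,\cdot)$, which drops out only in the difference.
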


Condition~\textup{(i)} applies to one-dimensional models in which the
population and SAA value functions are monotone and the transition is
monotone in the noise. We use this condition in the
renewable harvesting application. Condition~\textup{(ii)}
applies when the state space is one-dimensional and the value functions
have a common Lipschitz bound. This is the condition used in the
inventory-control application. Condition~\textup{(iii)} covers
low-dimensional convex models whose value functions are uniformly
Lipschitz. The restriction to state dimension at most three reflects
the available entropy bounds for classes of convex functions.

In applications, the set \(\mathcal V\) is constructed from structural
properties of the population and empirical DP operators. Uniform
boundedness follows from the DP contraction property, while monotonicity,
Lipschitz continuity, or convexity is established from the model
dynamics and stage costs. These properties ensure that both \(V\) and
\(\hat V_N\) belong to the same deterministic class \(\mathcal V\),
with probability tending to one.

\subsection{Beyond uniqueness of optimal one-step costs and transitions}
\label{sec:nonunique-policy-limit}

For completeness, we record a supplementary extension of the 
functional CLT beyond \Cref{ass:uniqueness-type}. 
When population optimal
controls induce different one-step costs or transitions, the infimum
operator is only directionally differentiable, and the resulting limit
need not be Gaussian.  Proofs are deferred to
\Cref{app:nonunique-policy-limit}.

For \(q\in C(\mathcal X\times\mathcal U)\), we define the infimum operator
\begin{align}
\label{eq:inf-operator}
[\Psi(q)](x)
\coloneqq
\min_{u\in\mathcal U}q(x,u).
\end{align}
The theorem below requires that the population optimal solution
set
\(\mathcal U^\star \), defined in
\eqref{eq:solutionset}, be lower hemicontinuous, meaning that whenever
$x_N \in \mathcal{X}$, 
\(x_N\to x\), and \(u\in\mathcal U^\star(x)\), there exist
\(u_N\in\mathcal U^\star(x_N)\) such that \(u_N\to u\). 
As we show in the appendices, 
this lower hemicontinuity ensures  $\Psi$
is directionally differentiable at $\mathcal{Q}(V)$.
We define
\begin{align}
\label{eq:Phi}
\Phi(x,u,\xi)
\coloneqq
f(x,u,\xi)+\gamma V(F(x,u,\xi)),
\quad \text{and} \quad 
\Delta_N
\coloneqq
N^{1/2}
\left(
\hat{\mathcal Q}_N(\hat V_N)-\mathcal Q(V)
\right).
\end{align}

\begin{theorem}
\label{thm:nonunique-policy-limit}
Suppose that \Cref{inf-soc:ass-2}
holds.
Then
\begin{align}
\label{eq:Q-CLT}
N^{1/2}(\hat{\mathcal Q}_N(V)-\mathcal Q(V))
\rightsquigarrow
\mathfrak Y
\quad
\text{in} 
\quad 
C(\mathcal X\times\mathcal U),
\end{align}
where \(\mathfrak Y\) is a centered Gaussian 
process on $\mathcal{X} \times \mathcal{U}$ with
covariance kernel
\[
\operatorname{Cov}
\bigl(
\mathfrak Y(x,u),\mathfrak Y(x',u')
\bigr)
=
\operatorname{Cov}_{\xi\sim P}
\bigl(
\Phi(x,u,\xi),\Phi(x',u',\xi)
\bigr).
\]
If, furthermore,
\(\mathcal{U}^\star\) is lower hemicontinuous, 
\eqref{eq:inf-soc:q-continuity} holds, 
and
\begin{align}
\label{eq:nonunique-directional-linearization}
N^{1/2}
\big[
\Psi(\mathcal Q(V)+N^{-1/2}\Delta_N)
-
\Psi(\mathcal Q(V))
\big]
-
\Psi'(\mathcal Q(V);\Delta_N)
=
o_p(1)
\quad\text{in} \quad C(\mathcal X),
\end{align}
then
\[
N^{1/2}(\hat V_N-V)
\rightsquigarrow
\mathfrak Z
\quad
\text{in} \quad  C(\mathcal X),
\]
where \(\mathfrak Z\) is
the unique solution in \(C(\mathcal X)\) of
\[
\mathfrak Z(x)
=
\min_{u\in\mathcal{U}^\star(x)}
\left\{
\mathfrak Y(x,u)
+
\gamma
\mathbb{E}_{\xi\sim P}
[\mathfrak Z(F(x,u,\xi))]
\right\},
\qquad x\in\mathcal X.
\]
\end{theorem}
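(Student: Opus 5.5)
The plan has two parts: first the functional CLT \eqref{eq:Q-CLT}, then its propagation through the fixed point equation.

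\textbf{Step 1: the CLT \eqref{eq:Q-CLT}.} The process $N^{1/2}(\hat{\mathcal Q}_N(V)-\mathcal Q(V))$ is the empirical process indexed by the class $\{\Phi(x,u,\cdot):(x,u)\in\mathcal X\times\mathcal U\}$. By \Cref{inf-soc:ass-2}, this class is Lipschitz in the parameter $(x,u)$ over a compact set, with square integrable constant $K(\xi)(1+\gamma\,\mathrm{Lip}(V))$. This needs $V$ to be Lipschitz; alternatively, I would use the continuity of $V$ and work with a bracketing argument directly. The Lipschitz case follows from the contraction mapping on Lipschitz functions only if $\gamma\,\sup K<1$, which need not hold. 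So I would instead invoke the CLT in $C(S)$ for i.i.d.\ continuous random functions (Jain--Marcus type) when a Lipschitz structure is available. Otherwise I would use Example 2.11.x/Theorem 2.7.11 of \cite{Vaart2023}: parametric classes that are Lipschitz in the parameter have finite bracketing integral. To cover merely continuous $V$, I would approximate $V$ uniformly by Lipschitz functions and use a standard approximation argument for weak convergence. The finite dimensional marginals are Gaussian with the stated covariance by the multivariate CLT, and tightness comes from the bracketing bound. This yields $\mathfrak Y$ with continuous paths.

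\textbf{Step 2: expansion of $\hat{\mathcal Q}_N(\hat V_N)$.} I would write
\[
\Delta_N=N^{1/2}(\hat{\mathcal Q}_N(V)-\mathcal Q(V))+\gamma N^{1/2}\mathcal A(\hat V_N-V)+o_p(1),
\]
using \eqref{eq:inf-soc:q-continuity} together with the identity $\mathcal Q(\hat V_N)-\mathcal Q(V)=\gamma\mathcal A(\hat V_N-V)$. Since $\hat V_N=\Psi(\hat{\mathcal Q}_N(\hat V_N))$ and $V=\Psi(\mathcal Q(V))$, condition \eqref{eq:nonunique-directional-linearization} gives
\[
Z_N\coloneqq N^{1/2}(\hat V_N-V)=\Psi'(\mathcal Q(V);\Delta_N)+o_p(1).
\]
By the appendix lemma under lower hemicontinuity, $\Psi'(\mathcal Q(V);h)(x)=\min_{u\in\mathcal U^\star(x)}h(x,u)$. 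This map is positively homogeneous and $1$-Lipschitz in sup norm, and it maps into $C(\mathcal X)$. Combining the two displays,
\[
Z_N=\mathcal M(Y_N+\gamma\mathcal A Z_N)+o_p(1),
\]
where $\mathcal M\coloneqq\Psi'(\mathcal Q(V);\cdot)$ and $Y_N\coloneqq N^{1/2}(\hat{\mathcal Q}_N(V)-\mathcal Q(V))$.

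\textbf{Step 3: solving the fixed point equation.} For fixed $y\in C(\mathcal X\times\mathcal U)$, the map $z\mapsto\mathcal M(y+\gamma\mathcal A z)$ is a $\gamma$-contraction on $C(\mathcal X)$, because $\|\mathcal A\|\le1$ and $\mathcal M$ is $1$-Lipschitz. Let $S(y)$ denote its unique fixed point. Then $S$ is Lipschitz with constant $(1-\gamma)^{-1}$, since
\[
\|S(y)-S(y')\|_\infty\le\|y-y'\|_\infty+\gamma\|S(y)-S(y')\|_\infty.
\]
The same estimate gives
\[
\|Z_N-S(Y_N)\|_\infty\le(1-\gamma)^{-1}\|r_N\|_\infty=o_p(1),
\]
where $r_N$ is the $o_p(1)$ remainder from Step 2. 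By the continuous mapping theorem and Slutsky's lemma, $Z_N\rightsquigarrow S(\mathfrak Y)=\mathfrak Z$, and uniqueness of $\mathfrak Z$ is exactly the contraction property.

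\textbf{Main obstacle.} I expect the main obstacle to be Step 1 when $V$ is only continuous rather than Lipschitz, that is, establishing the Donsker property of the class $\{\Phi(x,u,\cdot)\}$. I would first try to show that $V$ inherits enough modulus-of-continuity control for a bracketing entropy bound. Failing that, I would rely on the i.i.d.\ CLT in $C(S)$ under a square integrable Lipschitz-type modulus, handling $V\circ F$ by uniform approximation of $V$ by Lipschitz functions.
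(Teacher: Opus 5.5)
Your Steps 2 and 3 match the paper's argument. The paper uses the same expansion $\Delta_N=Y_N+\gamma\mathcal A Z_N+o_p(1)$ from \eqref{eq:inf-soc:q-continuity} and the same combination of \eqref{eq:nonunique-directional-linearization} with the $1$-Lipschitz property of $\Psi'(\mathcal Q(V);\cdot)$. It also uses the same $\gamma$-contraction $W\mapsto\Psi'(\mathcal Q(V);h+\gamma\mathcal AW)$ with Lipschitz fixed-point map $S$. It concludes the same way, via $\|Z_N-S(Y_N)\|_\infty\le\gamma\|Z_N-S(Y_N)\|_\infty+o_p(1)$ and the continuous mapping theorem. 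That part is fine.

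\textbf{Gap in Step 1.} You do not resolve Step 1, and the fallback you propose would not close it. Suppose you approximate $V$ uniformly by Lipschitz $V_k$ with $\|V-V_k\|_\infty\le\varepsilon_k$. The remainder process $\sup_{(x,u)}\bigl|N^{-1/2}\sum_{i=1}^N\bigl((V-V_k)(F(x,u,\xi^{(i)}))-\mathbb E_{\xi\sim P}[(V-V_k)(F(x,u,\xi))]\bigr)\bigr|$ then has only the a priori bound $2N^{1/2}\varepsilon_k$. This does not vanish as $N\to\infty$ for fixed $k$. Making the remainder small needs entropy control of the class $\{(V-V_k)\circ F(x,u,\cdot)\}$, that is, a quantitative modulus of continuity of $V$, which is exactly what is missing. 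For the same reason, a Jain--Marcus CLT in $C(\mathcal X\times\mathcal U)$ needs a Lipschitz-type bound with respect to a metric with finite entropy integral. Continuity of $V$ alone does not supply one.

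\textbf{Missing ingredient.} The needed fact is \Cref{lem:inf-soc:holder-value}: $V$ is $\alpha$-H\"older for every sufficiently small $\alpha\in(0,1]$. Your own remark points toward it.
\begin{itemize}
\item Iterating the Lipschitz seminorm under $\mathcal T$ produces the factor $\gamma\mathbb E[K(\xi)]$, which may exceed one.
\item For the H\"older seminorm you get instead $[\mathcal TW]_\alpha\le\operatorname{diam}(\mathcal X)^{1-\alpha}\mathbb E[K(\xi)]+\gamma\mathbb E[K(\xi)^\alpha]\,[W]_\alpha$.
\item By dominated convergence (using $K^\alpha\le1+K^2$), $\mathbb E[K(\xi)^\alpha]\to\mathrm{Prob}\{K(\xi)>0\}\le1$ as $\alpha\downarrow0$. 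Hence $\gamma\mathbb E[K(\xi)^\alpha]<1$ for small $\alpha$.
\item Value iteration from $V_0=0$ then gives a uniform H\"older bound, which passes to the limit $V$.
\end{itemize}

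\textbf{Completing Step 1.} With $V$ H\"older, $|\Phi(x,u,\xi)-\Phi(x',u',\xi)|\le M(\xi)\delta^\alpha$. Here $\delta=\|x-x'\|_2+\|u-u'\|_2$ and $M(\xi)=c\,K(\xi)+\gamma L_VK(\xi)^\alpha$ is square integrable, with $c$ depending only on the diameter of $\mathcal X\times\mathcal U$. So the class $\{\Phi(x,u,\cdot)\}$ is Lipschitz in the parameter for the metric $\delta^\alpha$. It therefore has polynomially growing bracketing numbers and is $P$-Donsker, and your bracketing or Jain--Marcus route goes through. Finally, the processes have paths in the closed subspace $C(\mathcal X\times\mathcal U)$ of $\ell^\infty(\mathcal X\times\mathcal U)$. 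The limit is therefore supported there, and \eqref{eq:Q-CLT} follows.
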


If \Cref{ass:uniqueness-type} also holds, the minimum in the limiting
fixed point equation is redundant, and the limit reduces to the
Gaussian limit in \eqref{eq:inf-soc:limit}.
The preceding theorem is stated under the linearization
condition \eqref{eq:nonunique-directional-linearization}.  It is the
nonunique control analogue of the linearization condition
\eqref{eq:inf-soc:linearization-error}. The next lemma provides
simple sufficient conditions.

\begin{lemma}[Compactness implies directional linearization]
\label{lem:nonunique-compact-directional-linearization}
If \Cref{inf-soc:ass-2} holds, 
\(\mathcal U^\star\) is lower
hemicontinuous, 
\(\mathcal A \) is
compact, and 
\eqref{eq:inf-soc:q-continuity} holds, 
then  condition
\eqref{eq:nonunique-directional-linearization} is satisfied.
\end{lemma}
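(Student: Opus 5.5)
The plan is to reduce \eqref{eq:nonunique-directional-linearization} to two facts. The first is Hadamard directional differentiability of the infimum operator $\Psi$ at $q\coloneqq\mathcal Q(V)$, which the appendices provide under lower hemicontinuity of $\mathcal U^\star$, with $[\Psi'(q;h)](x)=\min_{u\in\mathcal U^\star(x)}h(x,u)$. The second is asymptotic tightness of $\Delta_N$ in $C(\mathcal X\times\mathcal U)$. Compactness of $\mathcal A$ enters only in the second fact.

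\emph{Step 1: uniform convergence on compacts.} Define $g_N(h)\coloneqq N^{1/2}[\Psi(q+N^{-1/2}h)-\Psi(q)]-\Psi'(q;h)$. I first show that for every compact $K\subset C(\mathcal X\times\mathcal U)$,
\[
\sup_{h\in K}\|g_N(h)\|_\infty\to 0 .
\]
The argument is by contradiction. If the claim fails, there are $\delta>0$ and $h_N\in K$ with $\|g_N(h_N)\|_\infty\ge\delta$ along a subsequence. Passing to a further subsequence, $h_N\to h\in K$. Hadamard directional differentiability with $t_N=N^{-1/2}$ then gives $\|g_N(h_N)\|_\infty\to 0$, which is a contradiction.

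\emph{Step 2: stochastic boundedness of $N^{1/2}(\hat V_N-V)$.} Since $\Psi$ is $1$-Lipschitz in the sup norm, we have $\|\hat V_N-V\|_\infty\le\|\hat{\mathcal Q}_N(\hat V_N)-\mathcal Q(V)\|_\infty$. I then split
\[
\hat{\mathcal Q}_N(\hat V_N)-\mathcal Q(V)=[\hat{\mathcal Q}_N-\mathcal Q](V)+\bigl([\hat{\mathcal Q}_N-\mathcal Q](\hat V_N)-[\hat{\mathcal Q}_N-\mathcal Q](V)\bigr)+\gamma\mathcal A(\hat V_N-V),
\]
using that $\mathcal Q(W)-\mathcal Q(V)=\gamma\mathcal A(W-V)$ and that $\|\mathcal A\|\le1$. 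This yields
\[
(1-\gamma)N^{1/2}\|\hat V_N-V\|_\infty\le\|N^{1/2}[\hat{\mathcal Q}_N-\mathcal Q](V)\|_\infty+o_p(1).
\]
The first term on the right is $O_p(1)$ by \eqref{eq:Q-CLT}, and the remainder is $o_p(1)$ by \eqref{eq:inf-soc:q-continuity}. Hence $N^{1/2}\|\hat V_N-V\|_\infty=O_p(1)$.

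\emph{Step 3: tightness of $\Delta_N$.} The same decomposition, multiplied by $N^{1/2}$, writes $\Delta_N$ as the sum of three terms. The first is $N^{1/2}[\hat{\mathcal Q}_N-\mathcal Q](V)$, which converges weakly and is therefore asymptotically tight. The second is $o_p(1)$. The third is $\gamma\mathcal A\bigl(N^{1/2}(\hat V_N-V)\bigr)$. This is the step where compactness of $\mathcal A$ is needed. Given $\varepsilon>0$, choose $M$ with $\limsup_N\operatorname{Prob}^*\{N^{1/2}\|\hat V_N-V\|_\infty>M\}<\varepsilon$. The closure of $\gamma\mathcal A$ applied to the $M$-ball is compact, so the third term lies in a fixed compact set with probability at least $1-\varepsilon$ eventually. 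A finite sum of asymptotically tight sequences is asymptotically tight, since the Minkowski sum of compact sets is compact and the $\delta$-enlargements are handled in the standard way. Consequently, for each $\varepsilon>0$ there is a compact $K_\varepsilon$ such that $\liminf_N\operatorname{Prob}_*\{\Delta_N\in K_\varepsilon^\delta\}\ge1-\varepsilon$ for every $\delta>0$.

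\emph{Step 4: conclusion.} Because $\Psi$ is $1$-Lipschitz and $\Psi'(q;\cdot)$ is $1$-Lipschitz, each $g_N$ is $2$-Lipschitz uniformly in $N$. So Step 1 extends from $K_\varepsilon$ to the enlargement $K_\varepsilon^\delta$ at the cost of an additive $2\delta$. It follows that
\[
\operatorname{Prob}^*\{\|g_N(\Delta_N)\|_\infty>\eta\}\le\operatorname{Prob}^*\{\Delta_N\notin K_\varepsilon^\delta\}+\mathbf 1\Bigl\{\sup_{K_\varepsilon}\|g_N\|_\infty+2\delta>\eta\Bigr\}.
\]
Choosing $\delta<\eta/4$ and letting $N\to\infty$ and then $\varepsilon\to0$ gives \eqref{eq:nonunique-directional-linearization}.

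The main obstacle is Step 3. Without compactness of $\mathcal A$, the term $\mathcal A\bigl(N^{1/2}(\hat V_N-V)\bigr)$ is merely bounded in probability, not tight in $C(\mathcal X\times\mathcal U)$, and Hadamard directional differentiability cannot be applied along the random directions $\Delta_N$.
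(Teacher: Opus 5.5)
Your proposal is correct and follows essentially the same route as the paper: stochastic boundedness of $N^{1/2}(\hat V_N-V)$, then tightness of $\Delta_N$ via compactness of $\mathcal A$ and \eqref{eq:inf-soc:q-continuity}, then Hadamard directional differentiability of $\Psi$ at $\mathcal Q(V)$ combined with a delta-method argument, which you make explicit through uniform convergence on compacta and the uniform $2$-Lipschitz bound on $g_N$, where the paper simply cites the delta method and a subsequence argument. The only minor difference is that your Step~2 invokes \eqref{eq:inf-soc:q-continuity} to obtain $N^{1/2}\|\hat V_N-V\|_\infty=O_p(1)$, whereas the paper takes this directly from the contraction estimate \eqref{eq:scaled-value-function-difference-bound}, which does not need that hypothesis.
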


\section{Empirical DP and policy optimization}
\label{sec:policy-saa}

To clarify the statistical effect of the fixed point structure, we compare
empirical DP with ordinary SAA applied directly to trajectory-based policy
optimization. In empirical DP, the one-period disturbance law is replaced by
its empirical distribution in the DP equation. In trajectory-based policy
optimization, complete disturbance trajectories are sampled and their average
discounted cost is minimized over a parameterized policy class.
We
first establish a CLT for SAA policy optimization and
then derive closed-loop identities comparing it with the 
fixed point limit of
empirical DP.
Proofs are deferred to
\Cref{app:variance-proofs}.

\subsection{Trajectory-based policy optimization}
\label{subsect:trajectory-policy-optimization}

We introduce an idealized SAA framework for policy optimization and study
its asymptotics. The formulation is idealized because it assumes
access to i.i.d.\ copies of the entire disturbance sequence
\(\boldsymbol\xi=(\xi_0,\xi_1,\ldots)\), so that each observation represents
one complete infinite-horizon sample path.

Fix \(x_0\in\mathcal X\). Let
\(\Theta\subseteq\mathbb R^q\) be nonempty and compact, and consider the
parameterized class
\(\Pi_\Theta\coloneqq\{\pi_\theta:\theta\in\Theta\}\) of stationary
Markov policies \(\pi_\theta:\mathcal X\to\mathcal U\).
For a disturbance trajectory
\(\boldsymbol\xi=(\xi_0,\xi_1,\ldots)\in\Xi^{\mathbb N_0}\), let
\(\boldsymbol x_0^\theta=x_0\) and define
\begin{equation}
\label{eq:parametric-closed-loop-process}
\boldsymbol x_{t+1}^\theta
=
F\bigl(
\boldsymbol x_t^\theta,
\pi_\theta(\boldsymbol x_t^\theta),
\xi_t
\bigr),
\qquad
t=0,1,2,\ldots.
\end{equation}
The realized total discounted cost under \(\pi_\theta\) is
\begin{equation}
\label{eq:parametric-trajectory-cost}
J(\theta;\boldsymbol\xi)
\coloneqq
\sum_{t\geq 0}
\gamma^t
f\bigl(
\boldsymbol x_t^\theta,
\pi_\theta(\boldsymbol x_t^\theta),
\xi_t
\bigr).
\end{equation}
Let \(P^\infty\coloneqq P^{\otimes\mathbb N_0}\) denote the distribution
of \(\boldsymbol\xi\). We consider the population policy optimization
problem
\begin{equation}
\label{eq:parametric-policy-population}
\min_{\theta\in\Theta}
\mathbb E_{\boldsymbol\xi\sim P^\infty}
\bigl[
J(\theta;\boldsymbol\xi)
\bigr].
\end{equation}
Given i.i.d.\ disturbance trajectories
\(\boldsymbol\xi^{(1)},\ldots,\boldsymbol\xi^{(N)}\) with common
distribution \(P^\infty\), the corresponding SAA optimal value is
\begin{equation}
\label{eq:parametric-policy-saa}
\hat\vartheta_N^{\mathrm{tr}}
\coloneqq
\min_{\theta\in\Theta}
\frac{1}{N}
\sum_{i=1}^{N}
J\bigl(
\theta;\boldsymbol\xi^{(i)}
\bigr).
\end{equation}
Thus, in contrast to empirical DP, the empirical distribution in
\eqref{eq:parametric-policy-saa} is placed directly on complete
disturbance trajectories.

\begin{assumption}[Identifiable, well-specified policy class, and Lipschitz trajectory cost]
\label{ass:parametric-policy-class}
\textup{(i)}
The mapping
\(J\) is jointly
measurable,
and there exists
$\tilde{\theta} \in \Theta$
such that 
\(\mathbb E_{\boldsymbol{\xi} \sim P^\infty}[J(\tilde{\theta}, \boldsymbol{\xi})]<\infty\).
\textup{(ii)}
There exists a measurable function
\(L:\Xi^{\mathbb N_0}\to[0,\infty)\) satisfying
\(\mathbb E_{\boldsymbol{\xi} \sim P^\infty}[L(\boldsymbol\xi)^2]<\infty\) such that,
for all $\boldsymbol{\xi} \in \Xi^{\mathbb{N}_0}$,
$J(\cdot; \boldsymbol{\xi})$
is Lipschitz continuous with Lipschitz constant
$L(\boldsymbol\xi)$.
\textup{(iii)}
The policy parameterization is identifiable: if
\(\pi_\theta=\pi_{\theta'}\), then \(\theta=\theta'\).
\textup{(iv)}
The policy class is well specified: there exists
\(\bar\theta\in\Theta\) such that \(\pi_{\bar\theta}=\pi^\star\).
Moreover, every solution of the population policy optimization problem
\eqref{eq:parametric-policy-population} represents \(\pi^\star\), that is,
\[
\operatorname*{argmin}_{\theta\in\Theta}
\mathbb E_{\boldsymbol\xi\sim P^\infty}
\bigl[
J(\theta;\boldsymbol\xi)
\bigr]
\subseteq
\{\theta\in\Theta:\pi_\theta=\pi^\star\}.
\]
\end{assumption}

The classical asymptotic theory for SAA optimal values shows that, when
the population problem has a unique solution, the limiting variance is
the variance of the random objective evaluated at that
population solution; see Theorem~5.7 in \cite{SDR}. 
Let us introduce this variance for our setting.
Let \((\boldsymbol x_t)_{t\geq0}\) denote the closed-loop process under
\(\pi^\star\), started from \(x_0\), that is,
\[
\boldsymbol x_{t+1}
=
F\bigl(
\boldsymbol x_t,
\pi^\star(\boldsymbol x_t),
\xi_t
\bigr),
\qquad
\boldsymbol x_0=x_0.
\]
We define
\begin{align}
\label{eq:direct-variance}
\sigma_{\pi^\star,\gamma}^2(x_0)
\coloneqq
\operatorname{Var}
\Big(
\sum_{t\geq 0}
\gamma^t
f\bigl(
\boldsymbol x_t,
\pi^\star(\boldsymbol x_t),
\xi_t
\bigr)
\Big).
\end{align}

\begin{proposition}[Policy optimization CLT]
\label{prop:policy-clt}
If
\Cref{inf-soc:ass-2,ass:parametric-policy-class} hold,
problem \eqref{eq:inf-soc} has a unique optimal policy \(\pi^\star\),
and $x_0 \in \mathcal{X}$, then
the optimal value of
\eqref{eq:parametric-policy-population} equals \(V(x_0)\), 
and
\begin{equation}
\label{eq:parametric-policy-clt}
N^{1/2}
\bigl(
\hat\vartheta_N^{\mathrm{tr}}-V(x_0)
\bigr)
\rightsquigarrow
\mathcal N
\bigl(
0,\sigma_{\pi^\star,\gamma}^2(x_0)
\bigr).
\end{equation}
\end{proposition}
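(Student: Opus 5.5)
The proof has two parts. First we identify the population optimal value of \eqref{eq:parametric-policy-population} with \(V(x_0)\). Then we apply the classical SAA optimal-value CLT (Theorem~5.7 in \cite{SDR}) with random objective \(J(\theta;\boldsymbol\xi)\) on the compact parameter set \(\Theta\).

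\textbf{Identification of the optimal value.} For any \(\theta\), \(\pi_\theta\in\Pi\), so \(\mathbb E[J(\theta;\boldsymbol\xi)]\) is the expected discounted cost of a stationary Markov policy started at \(x_0\). Since \(f\) is bounded on the compact set \(\mathcal X\times\mathcal U\times\Xi\) by \Cref{inf-soc:ass-2}, Fubini lets us exchange sum and expectation. Standard DP theory for the contraction \(\mathcal T\) then shows that this cost is at least \(V(x_0)\). For \(\bar\theta\) with \(\pi_{\bar\theta}=\pi^\star\), the cost equals \(V(x_0)\): iterating \(V=\mathcal T V\) along the closed-loop process gives
\[
V(x_0)=\mathbb E\Big[\sum_{t<T}\gamma^t f(\boldsymbol x_t,\pi^\star(\boldsymbol x_t),\xi_t)\Big]+\gamma^T\mathbb E[V(\boldsymbol x_T)],
\]
and the last term vanishes as \(T\to\infty\) because \(V\) is bounded. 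Hence the optimal value of \eqref{eq:parametric-policy-population} is \(V(x_0)\).

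\textbf{Hypotheses of the SAA CLT.} We must check the following.
\begin{enumerate}[label=\textup{(\alph*)},nosep]
\item \(J(\cdot;\boldsymbol\xi)\) is Lipschitz with constant \(L(\boldsymbol\xi)\), which is square integrable, by \Cref{ass:parametric-policy-class}(ii).
\item \(\mathbb E[J(\theta;\boldsymbol\xi)^2]<\infty\) for some \(\theta\). This holds for every \(\theta\), since \(|J|\le \|f\|_\infty/(1-\gamma)\).
\item \(\Theta\) is compact.
\end{enumerate}
These yield a functional CLT for \(N^{1/2}(\hat{\mathbb E}_N J-\mathbb E J)\) in \(C(\Theta)\), because a class that is Lipschitz in the parameter with \(L^2\) envelope is Donsker (Example~19.7 in van der Vaart). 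The Hadamard directional differentiability of the min functional, i.e.\ Danskin's theorem, then gives
\[
N^{1/2}(\hat\vartheta_N^{\mathrm{tr}}-V(x_0))\rightsquigarrow\min_{\theta\in S}Y(\theta),
\]
where \(S\) is the population argmin set and \(Y\) is the limiting Gaussian process.

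\textbf{Identifying the limit.} By \Cref{ass:parametric-policy-class}(iv), every \(\theta\in S\) satisfies \(\pi_\theta=\pi^\star\), and identifiability (iii) then forces \(S=\{\bar\theta\}\). The limit is therefore \(Y(\bar\theta)\sim\mathcal N(0,\operatorname{Var}J(\bar\theta;\boldsymbol\xi))\). Since \(\pi_{\bar\theta}=\pi^\star\), the process \(\boldsymbol x^{\bar\theta}_t\) coincides with the closed-loop process \(\boldsymbol x_t\), so this variance is exactly \(\sigma^2_{\pi^\star,\gamma}(x_0)\) from \eqref{eq:direct-variance}.

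\textbf{Main obstacle.} The delicate step is the identification, in particular the lower bound: the cost of an arbitrary measurable stationary policy, possibly discontinuous, must be shown to be at least \(V(x_0)\). The plan is to use that the policy evaluation operator \(T_\pi W(x)=\mathbb E[f(x,\pi(x),\xi)+\gamma W(F(x,\pi(x),\xi))]\) acts on bounded measurable functions and satisfies \(T_\pi V\ge \mathcal T V=V\) pointwise. Iterating this monotone inequality and letting the horizon tend to infinity gives \(J_\pi\ge V\). Measurability of \(J\) in \(\boldsymbol\xi\), needed to apply the theorem on the product space, is supplied by \Cref{ass:parametric-policy-class}(i).
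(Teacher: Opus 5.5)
Your proposal is correct and follows essentially the paper's route: you identify the optimal value via admissibility of $\Pi_\Theta$ and well-specification, obtain a unique population minimizer from \Cref{ass:parametric-policy-class}\textup{(iii)}--\textup{(iv)}, and then apply the argument behind Theorem~5.7 in \cite{SDR} (a Donsker property for the class that is Lipschitz in $\theta$, followed by the Danskin/delta-method step for the min functional). You also supply two details the paper leaves implicit: the lower bound $J_\pi \ge V$ for arbitrary measurable stationary policies, via the monotone policy-evaluation operator, and square integrability of $J$ from $|J|\le\|f\|_\infty/(1-\gamma)$.
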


\subsection{Closed-loop variance identities}
\label{subsect:variance-identities}

The preceding subsection identifies
\(\sigma_{\pi^\star,\gamma}^2(x_0)\) as the asymptotic variance of the
trajectory-based SAA optimal value. The fixed-point CLT, in turn, gives
the asymptotic variance
\(\sigma_{\mathrm{DP},\gamma}^2(x_0)\) 
from \eqref{eq:DP-variance} of the SAA DP optimal value.
We now express both variances in terms of a covariance kernel.

Recall that \((\boldsymbol x_t)_{t\geq0}\) denotes the closed-loop process
under \(\pi^\star\), started from \(x_0\), and let
\((\tilde{\boldsymbol x}_t)_{t\geq0}\) be an independent copy. We take both
processes to be independent of the Gaussian limit \(\mathfrak H\) in
\eqref{eq:inf-soc:clt-mathcalV}, and let \(\mu_t\) denote the law of
\(\boldsymbol x_t\). Let \(\mathcal K_\gamma\) be the covariance kernel
defined in \eqref{covHt}, namely,
\[
\mathcal K_\gamma(x,x')
\coloneqq
\operatorname{Cov}_{\xi\sim P}
\bigl(
\Phi^\star(x,\xi),
\Phi^\star(x',\xi)
\bigr).
\]
The subscript \(\gamma\) emphasizes that
\(\mathcal K_\gamma\) depends on the discount factor through
\(\Phi^\star\).
For \(\rho\in(0,1)\), we define the discounted state-occupation measure
\[
\nu_\rho
\coloneqq
(1-\rho)\sum_{t\geq 0}\rho^t\mu_t .
\]
This is the normalized discounted state frequency of the closed-loop
process; cf.\ Remark~6.3.1 in \cite{HernandezLerma1996}. In finite
state-action models, the variance of the total discounted cost identity below 
follows from the 
classical covariance formula in Theorem~1 of \cite{Sobel1982}.

\begin{proposition}[Closed-loop variances]
\label{prop:random-path-variance-ratio}
If \Cref{inf-soc:ass-2,ass:uniqueness-type} hold,
\eqref{eq:inf-soc:fixed-point-linearization} is satisfied, 
problem \eqref{eq:inf-soc}
has a unique optimal policy $\pi^\star$, and
  $x_0 \in \mathcal{X}$, 
then
\begin{align}
\label{eq:DP-variance-discounted-occupation}
\sigma_{\mathrm{DP},\gamma}^2(x_0)
& =
\sum_{s, t \geq 0}
\gamma^{s+t}
\mathbb{E}
\left[
\mathcal{K}_\gamma(\boldsymbol x_s,\tilde{\boldsymbol x}_t)
\right]
= 
\frac{1}{(1-\gamma)^2}
\int_{\mathcal X\times\mathcal X}
\mathcal K_\gamma(x,x')
\,\nu_\gamma(\mathrm dx)\nu_\gamma(\mathrm dx'),
\\ 
\label{eq:direct-variance-discounted-occupation}
\sigma_{\pi^\star,\gamma}^2(x_0)
& =
\sum_{t \geq 0}
\gamma^{2t}
\mathbb{E}
\left[
\mathcal{K}_\gamma(\boldsymbol x_t,\boldsymbol x_t)
\right]
= 
\frac{1}{1-\gamma^2}
\int_{\mathcal X}
\mathcal K_\gamma(x,x)\,\nu_{\gamma^2}(\mathrm dx).
\end{align}
\end{proposition}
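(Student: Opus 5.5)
We prove the two identities separately. \Cref{thm:inf-soc:clt} gives $\mathfrak G=(I-\gamma\mathcal L)^{-1}\mathfrak H$. The DP variance then comes from a Neumann series expansion of this formula. The trajectory variance comes from a martingale-difference decomposition of the realized discounted cost.

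\emph{DP variance.} Since $\|\gamma\mathcal L\|\le\gamma<1$, we have $(I-\gamma\mathcal L)^{-1}=\sum_{t\ge0}\gamma^t\mathcal L^t$ in operator norm. Under a unique optimal policy, $F^\star(x,\xi)=F(x,\pi^\star(x),\xi)$. By induction on $t$, using independence of $\xi_0,\dots,\xi_{t-1}$ and the Markov property, $[\mathcal L^tW](x_0)=\mathbb E[W(\boldsymbol x_t)]$ for $W\in C(\mathcal X)$. Hence
\[
\mathfrak G(x_0)=\sum_{t\ge0}\gamma^t\,\mathbb E\bigl[\mathfrak H(\boldsymbol x_t)\mid\mathfrak H\bigr],
\]
where the expectation integrates only over the path, which is independent of $\mathfrak H$. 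The series converges almost surely, and in $L^2$, because $\sup_x|\mathfrak H(x)|$ has finite second moment (Gaussian process with continuous paths on a compact set). We then compute $\operatorname{Var}(\mathfrak G(x_0))$ by expanding the square and exchanging sums and expectations, which dominated convergence justifies. Writing the product of the two conditional expectations with an independent copy $\tilde{\boldsymbol x}$ gives $\mathbb E[\mathfrak H(\boldsymbol x_s)\mathfrak H(\tilde{\boldsymbol x}_t)]$. Fubini and independence turn this into $\mathbb E[\mathcal K_\gamma(\boldsymbol x_s,\tilde{\boldsymbol x}_t)]$. The occupation-measure form follows by writing $\sum_t\gamma^t\mu_t=(1-\gamma)^{-1}\nu_\gamma$.

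\emph{Trajectory variance.} Set $C_t\coloneqq f(\boldsymbol x_t,\pi^\star(\boldsymbol x_t),\xi_t)$ and $\mathcal F_t=\sigma(\xi_0,\dots,\xi_{t-1})$. Since $V$ is the value of $\pi^\star$, $V(\boldsymbol x_t)=C_t+\gamma V(\boldsymbol x_{t+1})-D_t$, where
\[
D_t\coloneqq\Phi^\star(\boldsymbol x_t,\xi_t)-\mathbb E[\Phi^\star(\boldsymbol x_t,\xi)\mid\mathcal F_t]
=\Phi^\star(\boldsymbol x_t,\xi_t)-V(\boldsymbol x_t).
\]
Telescoping gives
\[
\sum_{t\ge0}\gamma^tC_t-V(x_0)=\sum_{t\ge0}\gamma^tD_t,
\]
using boundedness of $V$ and $\gamma^TV(\boldsymbol x_T)\to0$. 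The $D_t$ are bounded martingale differences, so they are orthogonal in $L^2$. Moreover, $\mathbb E[D_t^2\mid\mathcal F_t]=\mathcal K_\gamma(\boldsymbol x_t,\boldsymbol x_t)$ because $\xi_t$ is independent of $\boldsymbol x_t$. Summing gives $\sigma^2_{\pi^\star,\gamma}(x_0)=\sum_t\gamma^{2t}\mathbb E[\mathcal K_\gamma(\boldsymbol x_t,\boldsymbol x_t)]$. Rewriting with $\nu_{\gamma^2}$ gives the second form.

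The main obstacle is the rigorous identification of $(I-\gamma\mathcal L)^{-1}\mathfrak H$ evaluated at $x_0$ with the path expectation series. This requires care with measurability of $(\omega,x)\mapsto\mathfrak H(x)$, which continuity of paths provides, and with the $L^2$ interchange of the infinite double sum. We will handle both using $\mathbb E\|\mathfrak H\|_\infty^2<\infty$, obtained from Fernique's theorem or from tightness in $C(\mathcal X)$.
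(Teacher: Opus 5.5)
Your proposal is correct and follows essentially the same route as the paper: the von Neumann series for $(I-\gamma\mathcal L)^{-1}$, the induction $[\mathcal L^t h](x_0)=\mathbb E[h(\boldsymbol x_t)]$, and an independent copy $\tilde{\boldsymbol x}$ with Fubini for $\sigma_{\mathrm{DP},\gamma}^2(x_0)$, and the telescoping martingale-difference decomposition (your $D_t$ is the paper's $\mathsf g(\boldsymbol x_t,\xi_t)$) for $\sigma_{\pi^\star,\gamma}^2(x_0)$. The only difference is how you justify interchanging the variance with the series. The paper uses boundedness of $\mathcal K_\gamma$, which already gives $\mathbb E\bigl[([\mathcal L^t\mathfrak H](x_0))^2\bigr]\le\sup_x\mathcal K_\gamma(x,x)$ and hence $L^2$ convergence, so Fernique is unnecessary; also, tightness in $C(\mathcal X)$ alone would not yield $\mathbb E\|\mathfrak H\|_\infty^2<\infty$.
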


\Cref{prop:random-path-variance-ratio} shows that the two variances
average the same covariance kernel in different ways. The empirical-DP
optimal-value variance averages \(\mathcal K_\gamma(x,x')\) over two
independent states drawn from the discounted occupation measure
\(\nu_\gamma\). Equivalently, it contains the cross-time terms
\(\mathbb{E}[\mathcal K_\gamma(\boldsymbol x_s,
\tilde{\boldsymbol x}_t)]\), and therefore includes both diagonal and
off-diagonal evaluations of the kernel. The trajectory-based SAA
optimal-value variance uses only the diagonal
\(x\mapsto\mathcal K_\gamma(x,x)\), averaged with respect to
\(\nu_{\gamma^2}\). Equivalently, it contains only the terms
\(\mathbb{E}[\mathcal K_\gamma(\boldsymbol x_t,\boldsymbol x_t)]\).

Although we do not pursue this direction here, the variance expressions in \Cref{prop:random-path-variance-ratio} may suggest that, under suitable conditions, the normalized variances \[ (1-\gamma)^2\sigma_{\mathrm{DP},\gamma}^2(x_0) \quad\text{and}\quad (1-\gamma^2)\sigma_{\pi^\star,\gamma}^2(x_0) \] converge as \(\gamma \uparrow 1\). Such limits may be connected to vanishing-discount theory for SOC problems, which studies conditions under which discounted SOC problems converge, in an appropriate sense, to average-cost SOC problems; see \cite{Feinberg2012,HernandezLerma1996}.

\subsection{A finite state-action example with three variance-ratio regimes}
\label{subsect:variance-ratio-example}

The following finite state-action example shows that the variance ratio associated
with \eqref{eq:direct-variance} and \eqref{eq:DP-variance} can be less
than, equal to, or greater than one, depending on a 
model parameter and the discount factor.

\begin{example}
\label{ex:param-finite-state-variance-ratio}
\normalfont
Let \(\mathcal X=\{0,1,2\}\),
\(\mathcal U=\{0\}\), and \(x_0=0\). The deterministic dynamics are
\(0\mapsto1\), \(1\mapsto2\), and \(2\mapsto1\). Let
\(\xi=(Y,Z)\), where \(Y\) and \(Z\) are independent Rademacher random
variables, and let \(\eta\in\mathbb R\). 
We define
\(f_\eta(0,0,\xi)\coloneqq Y\), \(f_\eta(1,0,\xi)\coloneqq \eta Z\), and
\(f_\eta(2,0,\xi) \coloneqq \eta^2Z\). 
\end{example}

In the next result, we make the dependence of the limiting variances on the parameter \(\eta\) explicit.

\begin{lemma}[Variance formulas for
\Cref{ex:param-finite-state-variance-ratio}]
\label{lem:param-finite-state-variances}
For every \(\eta\in\mathbb R\) and \(\gamma\in(0,1)\),
\begin{equation*}
\begin{aligned}
\sigma_{\mathrm{DP},\eta,\gamma}^2(0)
=
1+
\frac{\gamma^2\eta^2(1+\gamma \eta)^2}
     {(1-\gamma^2)^2},
\quad \text{and} \quad 
\sigma_{\pi^\star,\eta,\gamma}^2(0)
=
1+
\frac{\gamma^2\eta^2+\gamma^4\eta^4}
     {1-\gamma^4}.
\end{aligned}
\end{equation*}
\end{lemma}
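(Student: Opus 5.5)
Both formulas come from explicit computations, and the plan is to carry them out directly. First I check that the hypotheses of \Cref{thm:inf-soc:clt} and \Cref{prop:policy-clt} hold. Since \(\mathcal U=\{0\}\) is a singleton, the optimal policy \(\pi^\star\equiv 0\) is unique, so \Cref{ass:uniqueness-type} holds with \(f^\star=f_\eta(\cdot,0,\cdot)\) and \(F^\star\) equal to the deterministic map \(0\mapsto1\), \(1\mapsto2\), \(2\mapsto1\). \Cref{inf-soc:ass-2} is immediate for finite \(\mathcal X,\mathcal U\) and bounded \(\Xi\subset\{\pm1\}^2\). Condition (iii) of \Cref{prop:main-linearization-conditions} and condition (ii) of \Cref{prop:main-equicontinuity-conditions} give \eqref{eq:inf-soc:fixed-point-linearization}. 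Hence \eqref{eq:inf-soc:limit} holds, and I can use the finite-state representation of \Cref{rem:finite-state-bellman} with \(P^\star\) the permutation-type matrix of these dynamics.

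Next I compute \(V\). The Bellman equations read \(V(0)=\mathbb E[Y]+\gamma V(1)\), \(V(1)=\eta\mathbb E[Z]+\gamma V(2)\) and \(V(2)=\eta^2\mathbb E[Z]+\gamma V(1)\). Since \(\mathbb E[Y]=\mathbb E[Z]=0\), this gives \(V\equiv0\). Therefore \(\Phi^\star(x,\xi)=f_\eta(x,0,\xi)\), and the Gaussian vector \(\mathfrak H\) from \eqref{covHt} can be realized as
\[
\mathfrak H=(Y',\ \eta Z',\ \eta^2 Z')
\]
in distributional form. Here \(Y',Z'\) are independent standard normals, which reproduces the covariances \(1,0,0,\eta^2,\eta^3,\eta^4\).

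For the empirical-DP variance, I solve \(\mathfrak G=\mathfrak H+\gamma P^\star\mathfrak G\) componentwise. The equations \(\mathfrak G(1)=\mathfrak H(1)+\gamma\mathfrak G(2)\) and \(\mathfrak G(2)=\mathfrak H(2)+\gamma\mathfrak G(1)\) give
\[
\mathfrak G(1)=\frac{\mathfrak H(1)+\gamma\mathfrak H(2)}{1-\gamma^2}=\frac{\eta(1+\gamma\eta)}{1-\gamma^2}\,Z'.
\]
Substituting into \(\mathfrak G(0)=Y'+\gamma\mathfrak G(1)\) and using independence of \(Y'\) and \(Z'\) yields the stated \(\sigma^2_{\mathrm{DP},\eta,\gamma}(0)\).

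For the trajectory variance, I use definition \eqref{eq:direct-variance} directly, with \(\xi_t=(Y_t,Z_t)\) i.i.d. The closed-loop path from \(x_0=0\) is deterministic: it is \(0\) at \(t=0\), \(1\) at odd \(t\), and \(2\) at even \(t\ge2\). The realized cost is therefore
\[
Y_0+\sum_{t\ \mathrm{odd}}\gamma^t\eta Z_t+\sum_{t\ge2\ \mathrm{even}}\gamma^t\eta^2 Z_t,
\]
a sum of independent mean-zero terms. Its variance is \(1+\eta^2\sum_{k\ge0}\gamma^{2(2k+1)}+\eta^4\sum_{k\ge1}\gamma^{4k}\), which sums to the stated formula. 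As a consistency check, \Cref{prop:random-path-variance-ratio} should give the same two values.

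I expect no real obstacle. The only delicate points are verifying that the sufficient conditions apply, which is trivial here, and keeping the parity bookkeeping of the geometric sums correct.
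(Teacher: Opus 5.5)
Your proposal is correct and follows essentially the same route as the paper. The paper also works in the finite-state representation with $P^\star$ and $\operatorname{Cov}(\mathfrak H_\eta)$, and obtains $\mathfrak G(0)=G_Y+\frac{\gamma\eta(1+\gamma\eta)}{1-\gamma^2}G_Z$, reading it off the first row of $(I-\gamma P^\star)^{-1}$ rather than from your componentwise solve. It then computes the trajectory variance directly from the periodic path $0,1,2,1,2,\ldots$ using independence across time.

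Two things you make explicit are only implicit in the paper, and both are correct: that $V\equiv 0$, and that the CLT hypotheses hold via \Cref{prop:main-linearization-conditions}(iii) and \Cref{prop:main-equicontinuity-conditions}(ii). The latter is not strictly needed for the formula, since $\mathfrak G=(I-\gamma\mathcal L)^{-1}\mathfrak H$ is defined under \Cref{inf-soc:ass-2,ass:uniqueness-type} alone.
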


For 
\(\eta \in \{-1, 0, 2\}\),
\Cref{fig:param-finite-state-variance-ratio}
illustrates the variance ratio
\begin{align}
\label{eq:variance-ratio}
R_\eta(\gamma)
\coloneqq
\frac{\sigma_{\pi^\star,\eta,\gamma}^2(0)}
{\sigma_{\mathrm{DP},\eta,\gamma}^2(0)}.
\end{align}
For \(\eta = 0\), the ratio remains constant; for \(\eta = 2\), it decreases to zero; and for \(\eta = -1\), it diverges to \(\infty\) as \(\gamma \uparrow 1\).

\begin{figure}[t]
\centering
\begin{subfigure}[t]{0.48\textwidth}
\centering
\begin{tikzpicture}
\begin{axis}[
    width=0.95\linewidth,
    height=0.72\linewidth,
    xlabel={Discount factor \(\gamma\)},
    ylabel={Variance ratio \(R_\eta(\gamma)\)},
    xmin=0,
    xmax=1,
    ymin=0,
    ymax=1.05,
    domain=0:1,
    samples=300,
    grid=major,
    tick label style={font=\small},
    label style={font=\small},
    legend style={
        font=\small,
        at={(0.03,0.03)},
        anchor=south west,
        draw=none,
        fill=white,
        fill opacity=0.85,
        text opacity=1
    },
]
\addplot+[
    mark=none,
    thick,
    color={tab-blue},
]
{1};
\addlegendentry{\(\eta=0\)}

\addplot+[
    mark=none,
    very thick,
    dashed,
    color={tab-orange},
]
{
    ((1 - x^2)*(1 + 4*x^2 + 15*x^4))
    /
    ((1 + x^2)*(1 + 2*x^2 + 16*x^3 + 17*x^4))
};
\addlegendentry{\(\eta=2\)}

\addplot+[
    only marks,
    mark=*,
    mark size=1.6pt,
    color={tab-orange},
]
coordinates {(1,0)};

\end{axis}
\end{tikzpicture}
\caption{Cases \(\eta=0\) and \(\eta=2\).}
\label{fig:param-ratio-constant-decreasing}
\end{subfigure}
\hfill
\begin{subfigure}[t]{0.48\textwidth}
\centering
\begin{tikzpicture}
\begin{axis}[
    width=0.95\linewidth,
    height=0.72\linewidth,
    xlabel={Discount factor \(\gamma\)},
    ylabel={Variance ratio \(R_{\eta}(\gamma)\)},
    xmin=0,
    xmax=1,
    ymin=0.8,
    ymax=1000,
    ymode=log,
    domain=0:0.9995,
    samples=300,
    grid=major,
    clip=false,
    tick label style={font=\small},
    label style={font=\small},
    legend style={
        font=\small,
        at={(0.03,0.97)},
        anchor=north west,
        draw=none,
        fill=white,
        fill opacity=0.85,
        text opacity=1
    },
]
\addplot+[
    mark=none,
    very thick,
    densely dotted,
    color={tab-brown},
]
{
    (
        1 + (x^2 + x^4)/(1 - x^4)
    )
    /
    (
        1 + x^2*(1 - x)^2/(1 - x^2)^2
    )
};
\addlegendentry{\(\eta=-1\)}

\end{axis}
\end{tikzpicture}
\caption{Case \(\eta=-1\).}
\label{fig:param-ratio-increasing}
\end{subfigure}
\caption{Three variance ratio regimes for
\Cref{ex:param-finite-state-variance-ratio}. 
The function $R_\eta(\gamma)$ is the ratio of the 
variance of total discounted cost to the
asymptotic variance of the SAA DP optimal value, 
as defined in \eqref{eq:variance-ratio}.}
\label{fig:param-finite-state-variance-ratio}
\end{figure}

\section{Application to inventory control}
\label{sec:inventory-control}

We consider a risk-neutral discounted inventory model. In our
formulation, \(a_t\ge0\) is the physical order quantity. The state
\(x_t\in\mathbb R\) is inventory on hand minus backlog, and negative
values represent backlog. The demands \((D_t)_{t\ge0}\) are i.i.d.\ with common distribution function \(H\).
Moreover, the demand \(D\) has a density  supported on
\([0,\overline D]\) and  \(H\) is
strictly increasing on \([0,\overline D]\).
Let \([r]_+\coloneqq\max\{r,0\}\).
The inventory control problem 
with initial inventory $x_0 \in \mathbb{R}$
is given by
(see, for example, \cite{zipkin})
\[
\inf_{a_t\ge0}
\mathbb{E}\bigg[
\sum_{t\geq 0}\gamma^t
\left(
ca_t+b[D_t-(x_t+a_t)]_+
+h[x_t+a_t-D_t]_+
\right)
\bigg]
\quad \text{s.t.} \quad 
x_{t+1}=x_t+a_t-D_t .
\]
The constants \(c,b,h\ge0\) are ordering, backlog, and holding costs,
and we assume \(b>c\) and $c + h >0$.

Under our assumptions on the demand $D$, 
the optimal policy is the
base-stock policy \(a^\star(x)=[x^\star-x]_+\), where
\[
x^\star
=
H^{-1}
\left(
\frac{b-(1-\gamma)c}{b+h}
\right).
\]
Since \(H\) is strictly increasing, \(x^\star\) is unique, and
\(x^\star\in[0,\overline D]\).

\subsection{Post-order  reformulation}

The original order quantity \(a_t\) is unbounded and the state space is
unbounded. 
We therefore introduce a reformulation. Let
$\underline x\coloneqq\min\{x_0,-\overline D\}$, 
$\overline x\coloneqq\max\{x_0,\overline D\}$, 
and set
\[
\mathcal X=[\underline x,\overline x],
\quad
\mathcal U=[0,\overline x],
\quad \text{and} \quad 
\Xi=[0,\overline D].
\]
Given \(x\in\mathcal X\) and \(u\in\mathcal U\), the target \(u\)
induces the physical order \([u-x]_+\) and the post-order inventory
\(\max\{x,u\}\). We define
\begin{align*}
F(x,u,d) & \coloneqq \max\{x,u\}-d, 
\\ 
f(x,u,d)
& \coloneqq 
c[u-x]_+
+b[d-\max\{x,u\}]_+
+h[\max\{x,u\}-d]_+ .
\end{align*}
The post-inventory problem is
\[
\inf_{u_t\in \mathcal{U}}
\mathbb{E}\left[
\sum_{t\geq 0}\gamma^t f(x_t,u_t,D_t)
\right]
\quad \text{s.t.} \quad 
x_{t+1}=F(x_t,u_t,D_t).
\]

This target-level formulation has the same optimal value as the
original problem. Every target \(u\in[0,\overline x]\) induces the
feasible physical order \([u-x]_+\ge0\). Conversely, the optimal
base-stock policy in the original problem is represented by the
constant target \(u^\star(x)\equiv x^\star\), since
\(x^\star\in[0,\overline D]\subseteq[0,\overline x]\), and it induces the
physical order \([x^\star-x]_+\). Hence restricting to bounded targets does
not change the optimal value.

We now verify \Cref{inf-soc:ass-2}. The sets
\(\mathcal X\), \(\mathcal U\), and \(\Xi\) are compact. For
\(x\in\mathcal X\), \(u\in\mathcal U\), and \(d\in\Xi\), we have
\(F(x,u,d)\le\overline x\). Also, \(\max\{x,u\}\ge0\), because
\(u\ge0\), and hence
\(F(x,u,d)\ge-\overline D\ge\underline x\). Therefore
\(F(\mathcal X\times\mathcal U\times\Xi)\subseteq\mathcal X\).
The functions \(F\) and \(f\) are 
Lipschitz with deterministic Lipschitz constants. Thus
\Cref{inf-soc:ass-2} holds.

From \eqref{eq:solutionset},
we recall that $\mathcal{U}^\star$
denotes the optimal solution set
associated with the DP principle.
Next, we verify \Cref{ass:uniqueness-type}.

\begin{lemma}
\label{lem:inventory-redundant-targets}
For every  \(x\in\mathcal X\) and every \(u\in\mathcal{U}^\star(x)\),
$
\max\{x,u\}=\max\{x,x^\star\}
$.
Consequently, \Cref{ass:uniqueness-type}
holds with 
$f^\star(x,D) = f(x, x^\star, D)$,
and 
$F^\star(x,D) = F(x, x^\star, D)$.
\end{lemma}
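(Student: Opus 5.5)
We need to show that every population optimal target $u\in\mathcal U^\star(x)$ yields post-order inventory $y\coloneqq\max\{x,u\}$ equal to $\max\{x,x^\star\}$. Once this holds, $f$ and $F$ depend on $(x,u)$ only through $u$'s physical order cost $c[u-x]_+$ and through $y$. Since $c[u-x]_+=c(y-x)$, both $f(x,u,d)$ and $F(x,u,d)$ are functions of $(x,y,d)$ alone. Hence they coincide with $f(x,x^\star,d)$ and $F(x,x^\star,d)$, and the consequence for \Cref{ass:uniqueness-type} follows. Continuity of $f^\star$ and $F^\star$ is immediate from continuity of $f,F$ and of $x\mapsto x^\star$ as a constant.

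\textbf{Rewriting the action-value.} I will write
\[
[\mathcal Q(V)](x,u)=G(\max\{x,u\})-cx,
\qquad
G(y)\coloneqq cy+\mathbb E\bigl[b[D-y]_++h[y-D]_++\gamma V(y-D)\bigr].
\]
Minimizing over $u\in[0,\overline x]$ is then the same as minimizing $G$ over the attainable set $y\in[\max\{x,0\},\overline x]$. Since $x\le\overline x$, this set equals $[x\vee 0,\overline x]$.

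\textbf{Structure of $G$.} The claim reduces to showing that $G$ restricted to $[x\vee0,\overline x]$ has the unique minimizer $\max\{x,x^\star\}$. For this I would use the known structure of the inventory problem: $V$ is convex, and $G$ is convex with unique minimizer $x^\star$. The cleanest route is to compute $V$ explicitly under the base-stock policy. For $x\le x^\star$, the optimal cost is $V(x)=-cx+\text{const}$, since ordering up to $x^\star$ costs linearly. The closed-loop next state $x^\star-D\le x^\star$ always lies in that region, so $\gamma\mathbb E[V(y-D)]=-\gamma c(y-\mathbb E D)+\text{const}$ whenever $y-D\le x^\star$ almost surely. Then, for $y\le x^\star$,
\[
G'(y)=(1-\gamma)c-b+(b+h)H(y).
\]
This is strictly increasing because $H$ is strictly increasing on $[0,\overline D]$, and it vanishes exactly at $x^\star$. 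For $y>x^\star$, I need to show $G(y)>G(x^\star)$. Here I would invoke convexity of $V$, established via the standard argument that $\mathcal T$ preserves convexity in the post-order formulation and passing to the fixed point. That makes $G$ convex, and combining convexity with $G'<0$ on $[0,x^\star)$ and $G'>0$ just to the right of $x^\star$ gives a unique minimizer. Strictness to the right of $x^\star$ follows from convexity plus $(b+h)H(y)-(b-(1-\gamma)c)>0$ for $y>x^\star$ near $x^\star$, since the one-sided derivative of $G$ there is at least this quantity.

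\textbf{Conclusion and main obstacle.} With $G$ strictly convex around its unique minimizer $x^\star$, the minimizer over $[x\vee0,\overline x]$ is $\max\{x,x^\star\}$, using $x^\star\ge0$. Thus every $u\in\mathcal U^\star(x)$ has $\max\{x,u\}=\max\{x,x^\star\}$. The main obstacle is establishing convexity of $V$ and the uniqueness of the minimizer of $G$ on $(x^\star,\overline x]$, including the region where $V$ is not linear. My first attempt would be the convexity-preservation argument for $\mathcal T$ combined with the one-sided derivative bound above.
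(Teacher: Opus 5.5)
Your proposal is correct and follows the paper's route. Since $c[u-x]_+=c(\max\{x,u\}-x)$, the action-value depends on $u$ only through $y=\max\{x,u\}$, and base-stock optimality then pins down the optimal $y$; you additionally prove that $G$ has a unique minimizer on $[x\vee 0,\overline x]$, which the paper only asserts. As a minor simplification, the bound $V(z+\delta)-V(z)\ge -c\delta$ that you draw from convexity of $V$ follows directly from $V(z)=\min_{y\in[z\vee 0,\overline x]}G(y)-cz$ and the nesting of these feasible intervals, which gives $G(y)-G(x^\star)\ge (b+h)\int_{x^\star}^{y}\bigl(H(s)-H(x^\star)\bigr)\,\mathrm ds>0$ for $y>x^\star$ without the convexity-preservation step.
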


\begin{proof}
The control \(u\) affects the objective only through the
post-order inventory \(\max\{x,u\}\). By the base-stock optimality of
the original inventory problem, the unique optimal post-order inventory
is \(\max\{x,x^\star\}\). Thus every optimal control must induce this
post-order inventory. 
\end{proof}

\subsection{Inventory CLT}

Now, we formulate the CLT for the post-order formulation. Since we have already verified \Cref{inf-soc:ass-2,ass:uniqueness-type}, 
\Cref{thm:inf-soc:clt} provides
the limit law  $\mathfrak{H}$ in
\eqref{eq:inf-soc:clt-mathcalV}.

\begin{proposition}[Inventory  CLT]
\label{prop:inventory-clt}
For the post-order inventory reformulation,
\[
N^{1/2}(\hat V_N-V)
\rightsquigarrow
\mathfrak G
\quad
\text{in}
\quad C(\mathcal X),
\]
where $\mathfrak{G}$ is the unique centered Gaussian 
process on $\mathcal{X}$ satisfying
\[
\mathfrak G(x)
=
\mathfrak H(x)
+
\gamma\mathbb{E}_{D \sim P}[\mathfrak G(\max\{x,x^\star\}-D)],
\qquad
x\in\mathcal X.
\]
\end{proposition}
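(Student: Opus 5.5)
We apply \Cref{thm:inf-soc:clt}. \Cref{inf-soc:ass-2,ass:uniqueness-type} have been verified, with $F^\star(x,D)=\max\{x,x^\star\}-D$ by \Cref{lem:inventory-redundant-targets}. It therefore remains to establish the empirical linearization condition \eqref{eq:inf-soc:fixed-point-linearization}. As in the discussion following \Cref{thm:inf-soc:clt}, we split it into the linearization condition \eqref{eq:inf-soc:linearization-error} and the stochastic equicontinuity condition \eqref{eq:inf-soc:continuity}.

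Once this is done, \eqref{eq:inf-soc:limit} gives $\mathfrak G=(I-\gamma\mathcal L)^{-1}\mathfrak H$. Rewriting this as $\mathfrak G=\mathfrak H+\gamma\mathcal L\mathfrak G$ and inserting $[\mathcal L W](x)=\mathbb E[W(\max\{x,x^\star\}-D)]$ yields the displayed fixed point equation. Uniqueness of $\mathfrak G$ follows from invertibility of $I-\gamma\mathcal L$. Gaussianity and centering hold because $(I-\gamma\mathcal L)^{-1}$ is a bounded linear operator applied to the centered Gaussian process $\mathfrak H$.

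\textbf{Linearization via compactness.} Here $F(x,u,d)=\kappa(G(x,u)+\xi)$ with $G(x,u)=\max\{x,u\}$, $\xi=-D$, and $\kappa$ the identity, restricted to $\mathcal X$. The inclusion $F(\mathcal X\times\mathcal U\times\Xi)\subseteq\mathcal X$ was shown above. Since $D$ has a density, $-D$ does as well, so \Cref{lemma:additive-noise-transition-linearization} shows that $\mathcal A$ is compact. \Cref{prop:main-linearization-conditions}(ii) then gives \eqref{eq:inf-soc:linearization-error}. Compactness of $\mathcal A$ also reduces \eqref{eq:inf-soc:continuity}, via \Cref{prop:main-equicontinuity-conditions}(i), to the $Q$-level condition \eqref{eq:inf-soc:q-continuity}.

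\textbf{Equicontinuity via Donsker conditions.} We verify \eqref{eq:inf-soc:q-continuity} using \Cref{lem:Lipschitz-equicontinuity}(ii), since $\mathcal X$ is an interval. We need a closed class $\mathcal V$ of uniformly bounded functions with a common Lipschitz constant that contains $V$ and $\hat V_N$.

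Let $M$ bound $|f|$ and let $\ell$ be a deterministic Lipschitz constant of $f$ in $x$; for example, $\ell=c+\max\{b,h\}$ works. The candidate class is
\[
\mathcal V=\{W\in C(\mathcal X):\|W\|_\infty\le M/(1-\gamma),\ \mathrm{Lip}(W)\le \ell/(1-\gamma)\}.
\]
This set is closed under uniform limits, so it is closed in $C(\mathcal X)$.

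The key step is to show that both $\mathcal T$ and $\hat{\mathcal T}_N$, for every sample, map $\mathcal V$ into itself. Then $V$ and $\hat V_N$, being limits of value iteration started at $0\in\mathcal V$, lie in $\mathcal V$ with probability one.

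\textbf{Main obstacle: Lipschitz preservation.} The difficulty is the Lipschitz bound for the minimum over $u$. A naive estimate uses that $x\mapsto\max\{x,u\}$ is $1$-Lipschitz, so $x\mapsto f(x,u,d)+\gamma W(F(x,u,d))$ is $(\ell+\gamma L_W)$-Lipschitz uniformly in $u$ and $d$. A pointwise minimum of functions with a common Lipschitz constant retains that constant, so $\mathrm{Lip}(\hat{\mathcal T}_N W)\le \ell+\gamma L_W$. With $L_W\le\ell/(1-\gamma)$ this gives $\ell+\gamma\ell/(1-\gamma)=\ell/(1-\gamma)$, so the class is preserved.

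The point to check carefully is that the cost term $c[u-x]_+$ is $c$-Lipschitz in $x$, uniformly in $u$. The $b$- and $h$-terms depend on $x$ only through $\max\{x,u\}$ and are therefore $\max\{b,h\}$-Lipschitz. Both hold, so the uniform constant $\ell$ is valid and the argument closes. The sup-norm bound on $\hat{\mathcal T}_N W$ follows in the same way from $\|\hat{\mathcal T}_N W\|_\infty\le M+\gamma\|W\|_\infty$.
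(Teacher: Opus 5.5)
Your proposal is correct and follows the paper's proof step for step. Compactness of $\mathcal A$ via the additive-noise lemma gives the linearization condition and reduces stochastic equicontinuity to the $Q$-level condition, which you then obtain from the Lipschitz Donsker criterion with a bounded, uniformly Lipschitz class $\mathcal V$. Your explicit check that $\mathcal T$ and $\hat{\mathcal T}_N$ map $\mathcal V$ into itself, with the sharper constant $c+\max\{b,h\}$ in place of the paper's $c+b+h$, simply fills in what the paper calls ``standard fixed point arguments.''
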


\begin{proof}
We verify the hypotheses of the CLT in 
\Cref{thm:inf-soc:clt}.
We have already verified 
\Cref{inf-soc:ass-2,ass:uniqueness-type}.
For the latter, see \Cref{lem:inventory-redundant-targets}.

Now, we verify the linearization
condition in \eqref{eq:inf-soc:linearization-error}
using \Cref{prop:main-linearization-conditions,lemma:additive-noise-transition-linearization}.
To this end,
we verify the compactness condition of the operator $\mathcal{A}$ defined in \eqref{eq:A-operator}. 
For our reformulation, the operator $\mathcal{A}$ is
 given by
\[
[\mathcal A W](x,u)
=
\mathbb{E}_{D\sim P}[W(\max\{x,u\}-D)].
\]
Since the demand $D$ has a density, \Cref{lemma:additive-noise-transition-linearization} 
ensures compactness of $\mathcal{A}$.

Next, we verify the stochastic equicontinuity-type
condition \eqref{eq:inf-soc:continuity}
using \Cref{prop:main-equicontinuity-conditions,lem:Lipschitz-equicontinuity}. 
Since \(f\) is bounded on the compact
set \(\mathcal X\times\mathcal U\times\Xi\), let
\(M_f\coloneqq\|f\|_\infty\). The contraction bound 
ensures
$V$ and, with probability one,
$\hat{V}_N$ have a sup norm of
at most $(1-\gamma)^{-1} M_f$.
Moreover, standard fixed point
arguments ensure that
$V$ and, with probability one, $\hat{V}_N$
are Lipschitz continuous
with Lipschitz constant
$(1-\gamma)^{-1}(c+b+h)$.
Thus \(V,\hat V_N\) belong,
with probability one, to the set
\[
\mathcal V
=
\Big\{
W\in C(\mathcal X):
\|W\|_\infty\le(1-\gamma)^{-1} M_f,
\quad
\operatorname{Lip}(W)\le
(1-\gamma)^{-1}(c+b+h)
\Big\},
\]
where $\operatorname{Lip}(W)$ denotes
the Lipschitz constant of $W \in C(\mathcal{X})$.
Since \(\mathcal X\) is one-dimensional,
\Cref{lem:Lipschitz-equicontinuity} implies
\eqref{eq:inf-soc:q-continuity}.
Moreover, $\mathcal{A}$ is compact,
so \Cref{prop:main-equicontinuity-conditions}
applies.

Having verified the hypotheses of 
\Cref{thm:inf-soc:clt}, we obtain the
CLT.
\end{proof}

\subsection{Trajectory-based policy optimization over base-stock policies}
\label{subsect:inventory-policy-SAA}

For the original inventory formulation, 
we next consider trajectory-based SAA over the one-dimensional class
of base-stock policies.  For \(s\in[0,\overline D]\),
we define
\(a_s(x)\coloneqq [s-x]_+\).  For a demand trajectory
\(\boldsymbol D=(D_0,D_1,\ldots)\), let
\(J(s;\boldsymbol D)\) denote the realized total discounted cost of the
original inventory model under \(a_s\), starting from \(x_0\).

\begin{proposition}
\label{prop:inventory-parametric-policy-class}
If $x_0 \leq x^\star$, then
the  policy class
\(\{a_s:s\in[0,\overline D]\}\) satisfies
\Cref{ass:parametric-policy-class}
with $\Theta = [0,\overline D]$.
\end{proposition}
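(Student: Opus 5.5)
The plan is to check the four items of \Cref{ass:parametric-policy-class} one at a time for the original inventory model under \(a_s(x)=[s-x]_+\), \(s\in\Theta=[0,\overline D]\). The main tool is the post-order recursion. Write \(y_t^s\coloneqq\max\{\boldsymbol x_t^s,s\}\), so that \(\boldsymbol x_{t+1}^s=y_t^s-D_t\) and \(y_{t+1}^s=\max\{y_t^s-D_t,s\}\). The realized stage cost is then
\[
c\,(y_t^s-\boldsymbol x_t^s)+b[D_t-y_t^s]_++h[y_t^s-D_t]_+ .
\]

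\textbf{Items (i) and (ii).} By induction, every state lies in the bounded interval \([\min\{x_0,0\}-\overline D,\max\{x_0,\overline D\}]\). Hence the stage costs are uniformly bounded, \(J(s;\boldsymbol D)\) is a uniformly convergent series of jointly continuous functions of \((s,\boldsymbol D)\), and it is jointly measurable and bounded. This gives (i).

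For (ii), I will show by induction on \(t\) that \(s\mapsto y_t^s\) and \(s\mapsto \boldsymbol x_t^s\) are \(1\)-Lipschitz for every demand path. The base case uses \(\boldsymbol x_0^s=x_0\), and the inductive step uses that \(\max\) and the shift by \(-D_t\) preserve \(1\)-Lipschitzness. It follows that the order \(y_t^s-\boldsymbol x_t^s\) is \(2\)-Lipschitz, so each stage cost is \((2c+b+h)\)-Lipschitz in \(s\). Summing the discounted series then gives the deterministic constant \(L=(2c+b+h)/(1-\gamma)\), which is trivially square integrable.

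\textbf{Item (iii).} Identifiability is immediate. At any state \(x\le0\) (for instance \(x=\underline x\), or any \(x\le 0\) in the policy's domain) we have \(a_s(x)=s-x\), so \(a_s=a_{s'}\) forces \(s=s'\).

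\textbf{Item (iv).} Since \(x^\star\in[0,\overline D]\), we have \(a_{x^\star}=\pi^\star\), which gives well-specification with \(\bar\theta=x^\star\). The remaining, and main, step is to show that the expected cost \(\mathbb E[J(s;\boldsymbol D)]\) exceeds \(V(x_0)\) whenever \(s\neq x^\star\).

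I will use the standard decomposition of the expected cost of a stationary policy as \(V(x_0)\) plus the discounted expected Bellman gaps along its trajectory:
\[
\mathbb E[J(s;\boldsymbol D)]-V(x_0)=\sum_{t\ge0}\gamma^t\,\mathbb E\bigl[g(\boldsymbol x_t^s,y_t^s)\bigr],
\qquad
g(x,u)\coloneqq[\mathcal Q(V)](x,u)-V(x)\ge0,
\]
with \(\mathcal Q\) taken in the post-order formulation. By \Cref{lem:inventory-redundant-targets}, \(g(x,u)=0\) exactly when \(\max\{x,u\}=\max\{x,x^\star\}\). It therefore suffices to find one time \(t\) at which \(g(\boldsymbol x_t^s,y_t^s)>0\) with positive probability.

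I split into cases.
\begin{itemize}
\item If \(s\ge x_0\), then \(y_0^s=s\neq x^\star=\max\{x_0,x^\star\}\), since \(x_0\le x^\star\). So the gap is already positive at \(t=0\), deterministically.
\item If \(s<x_0<x^\star\), then \(y_0^s=x_0\neq x^\star\), and again the gap is positive at \(t=0\).
\item If \(s<x_0=x^\star\), the first step is optimal, and I look at \(t=1\). There \(\boldsymbol x_1^s=x^\star-D_0\), and \(y_1^s=\max\{x^\star-D_0,s\}\). This differs from \(x^\star\) whenever \(D_0>0\), which has probability one because \(D\) has a density. So \(g(\boldsymbol x_1^s,y_1^s)>0\) almost surely, and its expectation is positive.
\end{itemize}
This gives the strict inequality \(\mathbb E[J(s;\boldsymbol D)]>V(x_0)\) for \(s\neq x^\star\), and hence the inclusion of the argmin required in (iv).

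The step I expect to be the main obstacle is justifying the gap decomposition rigorously. I would handle it by telescoping the Bellman equation \(V(x)=\min_u[\mathcal Q(V)](x,u)\) along the closed loop, using boundedness of \(V\) so that \(\gamma^T\mathbb E[V(\boldsymbol x_T^s)]\to0\). A second point to check is that the expected total cost of the original model under \(a_s\) agrees with the post-order value computed with targets in \(\mathcal U\); this holds because \(s\le\overline D\le\overline x\) and the states stay in \(\mathcal X\).
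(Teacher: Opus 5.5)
Your proposal is correct. Items (i)--(iii) match the paper's argument: bounded states, a coupling argument using nonexpansiveness of the maximum that gives $L=(2c+b+h)/(1-\gamma)$, and evaluation of $a_s$ at a nonpositive state.

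For item (iv) you take a different route. The paper computes $\Upsilon(s)=\mathbb E[J(s;\boldsymbol D)]$ in closed form for $s\ge x_0$: the policy orders $s-x_0$ once and then $D_{t-1}$ in each later period. This gives $\Upsilon'(s)=\frac{b+h}{1-\gamma}\bigl(H(s)-H(x^\star)\bigr)$, so strict unimodality on $[x_0,\overline D]$ follows from strict monotonicity of $H$ alone, without using the value function. The paper handles the branch $s<x_0$ only by the qualitative remark that $a_s$ chooses a non-optimal post-order level at a state reached with positive probability.

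You instead use the Bellman-gap (performance-difference) identity with $g(x,u)=[\mathcal Q(V)](x,u)-V(x)\ge0$ for every $s$, and obtain strictness from \Cref{lem:inventory-redundant-targets}. This treats all $s\neq x^\star$ uniformly. It also supplies the rigorous justification that the paper's ``strictly suboptimal'' step for $s<x_0$ implicitly needs: a non-optimal action at a state visited with positive probability yields a strictly positive discounted gap. The cost is that strictness now rests on the structural uniqueness of the optimal post-order inventory. It also requires identifying the original model's stage costs with the post-order Bellman equation on $\mathcal X$, meaning states stay in $\mathcal X$ and targets $y_t^s$ lie in $[0,\overline x]$. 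You correctly flag this identification, and it does hold.

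The paper's calculus route gives an explicit cost curve but covers only $s\ge x_0$. Your route relies on the value function throughout but needs no case-specific computation.
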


\begin{proof}
For \Cref{ass:parametric-policy-class}\textup{(i)}, let
\(\boldsymbol x_t^s\) be the inventory state under \(a_s\). Since
\(x_0\le x^\star\le\overline D\), \(s\in[0,\overline D]\), and
\(D_t\in[0,\overline D]\), we have
\(\boldsymbol x_t^s\in[-\overline D,\overline D]\) for \(t\ge1\).
The order quantities and one-period costs are therefore bounded by a
deterministic constant, uniformly in \(s\) and the demand trajectory.
Hence \(J(s;\boldsymbol D)\) is integrable for, in particular,
\(s=x^\star\). Moreover, every finite-horizon discounted cost is jointly
measurable in \((s,\boldsymbol D)\), and \(J\) is its pointwise limit.
Thus \Cref{ass:parametric-policy-class}\textup{(i)} holds.

For \Cref{ass:parametric-policy-class}\textup{(ii)}, couple the systems
under \(a_s\) and \(a_r\) using the same demand trajectory. The recursion
\(\boldsymbol x_{t+1}^s=\max\{\boldsymbol x_t^s,s\}-D_t\) and the
nonexpansiveness of the maximum imply inductively that
\(\lvert\boldsymbol x_t^s-\boldsymbol x_t^r\rvert\le\lvert s-r\rvert\).
Consequently,
\(\lvert a_s(\boldsymbol x_t^s)-a_r(\boldsymbol x_t^r)\rvert
\le2\lvert s-r\rvert\).
Since \(y\mapsto b[d-y]_++h[y-d]_+\) is Lipschitz with constant at most
\(b+h\), the difference between the one-period costs is at most
\((2c+b+h)\lvert s-r\rvert\). Therefore,
\[
\lvert J(s;\boldsymbol D)-J(r;\boldsymbol D)\rvert
\le
\frac{2c+b+h}{1-\gamma}\lvert s-r\rvert .
\]
Thus \Cref{ass:parametric-policy-class}\textup{(ii)} holds with the
deterministic, and hence square-integrable, Lipschitz modulus
\(L(\boldsymbol D)=(2c+b+h)/(1-\gamma)\).

For \Cref{ass:parametric-policy-class}\textup{(iii)}, if \(a_s=a_r\),
then \(s=a_s(0)=a_r(0)=r\). Hence the parameterization is identifiable.

For \Cref{ass:parametric-policy-class}\textup{(iv)}, the inventory
structural result gives \(a_{x^\star}=\pi^\star\), so the policy class is
well specified. It remains to identify the population minimizers. 
Let
\(\Upsilon(s)\coloneqq \mathbb E[J(s;\boldsymbol D)]\). For \(s\ge x_0\), 
we have
\[
\Upsilon(s)
=
c(s-x_0)
+\frac{1}{1-\gamma}
\mathbb{E}\!\left[
b(D-s)_+ + h(s-D)_+
\right]
+\frac{\gamma c}{1-\gamma}\mathbb{E}[D]
\]
and hence
\[
\Upsilon'(s)
=
\frac{b+h}{1-\gamma}\bigl(H(s)-H(x^\star)\bigr).
\]
Because \(H\) is strictly increasing, \(\Upsilon\) is strictly decreasing on
\([x_0,x^\star]\) and strictly increasing on
\([x^\star,\overline D]\).

For \(s<x_0\), if \(x_0<x^\star\), then \(a_s\) does not order the
initial inventory to the uniquely optimal level \(x^\star\). If
\(x_0=x^\star\), then on the event \(\{D_0>0\}\), which has positive
probability, the next state is below \(x^\star\), while \(a_s\) selects
a post-order inventory strictly below \(x^\star\). Thus \(a_s\) is
strictly suboptimal in either case. Hence
\(\operatorname*{argmin}_{s\in[0,\overline D]}\Upsilon(s)=\{x^\star\}\),
and every population minimizer represents \(\pi^\star\). This verifies
\Cref{ass:parametric-policy-class}\textup{(iv)}.
\end{proof}

The trajectory-SAA optimal value is
\[
\hat\vartheta_N^{\mathrm{tr}}
\coloneqq
\min_{s\in[0,\overline D]}
\frac1N
\sum_{i=1}^N
J(s;\boldsymbol D^{(i)}),
\]
where
\(\boldsymbol D^{(1)},\ldots,\boldsymbol D^{(N)}\) are independent
demand trajectories.
\Cref{prop:inventory-parametric-policy-class,prop:policy-clt}
imply the following CLT.

\begin{corollary}[Trajectory-SAA CLT over base-stock policies]
\label{cor:inventory-policy-SAA}
If $x_0 \leq x^\star$, then
\[
N^{1/2}
\bigl(
\hat\vartheta_N^{\mathrm{tr}}-V(x_0)
\bigr)
\rightsquigarrow
\mathcal N
\bigl(
0,\sigma_{\pi^\star,\gamma}^2(x_0)
\bigr).
\]
\end{corollary}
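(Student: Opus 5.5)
The corollary follows by applying \Cref{prop:policy-clt} to the original inventory problem with the base-stock class \(\Pi_\Theta=\{a_s:s\in[0,\overline D]\}\) and \(\Theta=[0,\overline D]\). \Cref{prop:policy-clt} has four hypotheses: \Cref{inf-soc:ass-2}, \Cref{ass:parametric-policy-class}, uniqueness of the optimal policy \(\pi^\star\), and \(x_0\in\mathcal X\). I will check them in that order.

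\Cref{ass:parametric-policy-class} is exactly \Cref{prop:inventory-parametric-policy-class}, which applies because \(x_0\le x^\star\). Note that part (iv) of that assumption is stated relative to \(\pi^\star\), and the proof of \Cref{prop:inventory-parametric-policy-class} uses the structural fact \(a_{x^\star}=\pi^\star\) for the original model.

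The remaining hypotheses are not literally about the original problem, since its state and control spaces are unbounded. I will therefore apply \Cref{prop:policy-clt} to the post-order reformulation on \(\mathcal X=[\underline x,\overline x]\), \(\mathcal U=[0,\overline x]\), \(\Xi=[0,\overline D]\). That reformulation satisfies \Cref{inf-soc:ass-2}, as already verified, and \(x_0\in\mathcal X\) by the choice of \(\underline x,\overline x\). In the reformulation each base-stock policy \(a_s\) corresponds to the target policy \(u\equiv s\) on states \(x\le s\). More generally, target \(u\) and target \(\max\{x,u\}\) induce the same physical order, the same cost and the same transition. Hence for every \(s\) and every demand trajectory, the trajectory cost \(J(s;\boldsymbol D)\) is the same in both formulations, and so are \(\hat\vartheta_N^{\mathrm{tr}}\) and the population value \(V(x_0)\).

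The main obstacle is uniqueness of the optimal policy in the reformulation. Literally it fails: by \Cref{lem:inventory-redundant-targets}, at states \(x>x^\star\) every target \(u\le x\) is optimal. I will handle this by noting where uniqueness is actually used in \Cref{prop:policy-clt}. It serves only two purposes: identifying the population optimal value with \(V(x_0)\), and ensuring every population minimizer of \(\Upsilon(s)=\mathbb E[J(s;\boldsymbol D)]\) yields the same realized cost \(J(\cdot;\boldsymbol\xi)\). The limiting variance then comes from Theorem~5.7 in \cite{SDR} with a unique minimizer \(\bar\theta\).

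Both facts are available here directly. The proof of \Cref{prop:inventory-parametric-policy-class} shows that \(\operatorname*{argmin}_s\Upsilon(s)=\{x^\star\}\), and \(\Upsilon(x^\star)=V(x_0)\) by base-stock optimality. The realized cost at \(s=x^\star\) is the total discounted cost of the closed-loop process under \(\pi^\star\). Its variance is \(\sigma_{\pi^\star,\gamma}^2(x_0)\) as in \eqref{eq:direct-variance}, because every optimal target induces the same post-order inventory \(\max\{x,x^\star\}\) and hence the same cost along each path.

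The argument can therefore be run in either of two equivalent ways. One is to repeat the proof of \Cref{prop:policy-clt}: Theorem~5.7 in \cite{SDR}, with compact \(\Theta\), a square-integrable Lipschitz modulus (here deterministic, \((2c+b+h)/(1-\gamma)\)), and the unique minimizer \(x^\star\). The other is to view the original problem as one whose optimal policy \(\pi^\star=a_{x^\star}\) is unique, as the structural result states. Either way we obtain
\[
N^{1/2}\bigl(\hat\vartheta_N^{\mathrm{tr}}-V(x_0)\bigr)\rightsquigarrow\mathcal N\bigl(0,\sigma_{\pi^\star,\gamma}^2(x_0)\bigr),
\]
which is the claim.
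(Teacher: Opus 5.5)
Your proposal is correct and is essentially the paper's argument, which obtains the corollary by combining \Cref{prop:inventory-parametric-policy-class} with \Cref{prop:policy-clt}. Your extra point is also valid and is something the paper's one-line derivation does not spell out: \Cref{inf-soc:ass-2} holds only for the post-order reformulation, whereas uniqueness of $\pi^\star$ holds only for the original model. Your first route closes this gap by rerunning Theorem~5.7 of \cite{SDR} with $\operatorname*{argmin}_s\Upsilon(s)=\{x^\star\}$, $\Upsilon(x^\star)=V(x_0)$, and the pathwise equality of costs under all optimal targets; your second route, by contrast, just reproduces the literal application and inherits the same mismatch.
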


\subsection{Variance comparisons}

For \(x_0\le x^\star\), combining eq.~(3.9) in \cite{cltshapcheng21} with
\Cref{prop:random-path-variance-ratio}, we can show that
\[
\sigma_{\pi^\star,\gamma}^2(x_0)
=
\frac{1-\gamma}{1+\gamma}
\cdot 
\sigma_{\mathrm{DP},\gamma}^2(x_0).
\]
For \(\gamma\in\{0.3,0.6,0.9\}\),
\Cref{fig:inventory-clt} visualizes the limiting Gaussian
distributions for an example with \(x_0=0\), \(c=1\), \(b=4\),
\(h=1\), and \(D=\overline D\,\mathrm{Beta}(2,3)\), where
\(\overline D=6\).

\begin{figure}[t]
\centering

\begingroup

\pgfplotsset{
  inventory normal axis/.style={
    width=0.455\textwidth,
    height=0.31\textwidth,
    xmin=-30,
    xmax=30,
    ymin=0,
    ymax=0.185,
    domain=-30:30,
    samples=301,
    axis lines=left,
    enlargelimits=false,
    clip=true,
    grid=major,
    major grid style={
      draw=gray!15
    },
    xtick={
      -30,-20,-10,0,10,20,30
    },
    ytick={
      0,
      0.025,
      0.050,
      0.075,
      0.100,
      0.125,
      0.150,
      0.175
    },
    scaled y ticks=false,
    y tick label style={
      /pgf/number format/fixed,
      /pgf/number format/precision=3
    },
    tick label style={
      font=\scriptsize
    },
    label style={
      font=\small
    },
    legend style={
      font=\scriptsize,
      draw=none,
      fill=none,
      at={(0.98,0.98)},
      anchor=north east,
      row sep=1pt
    },
    legend cell align=left
  },
  gamma03/.style={
    no marks,
    line width=1.1pt,
    solid,
    color=tab-blue
  },
  gamma06/.style={
    no marks,
    line width=1.1pt,
    densely dashed,
    color=tab-orange
  },
  gamma09/.style={
    no marks,
    line width=1.1pt,
    dash dot,
    color=tab-brown
  }
}

\subfloat[
  Limiting SAA DP density
  \label{fig:inventory-dynamic-normal}
]{%
  \begin{tikzpicture}
    \begin{axis}[
      inventory normal axis,
      ylabel={Density}
    ]

      \addplot[gamma03]
      {
        exp(-x^2/(2*9.4449))/
        sqrt(2*pi*9.4449)
      };
      \addlegendentry{\(\gamma=0.3\)}

      \addplot[gamma06]
      {
        exp(-x^2/(2*25.4042))/
        sqrt(2*pi*25.4042)
      };
      \addlegendentry{\(\gamma=0.6\)}

      \addplot[gamma09]
      {
        exp(-x^2/(2*338.9693))/
        sqrt(2*pi*338.9693)
      };
      \addlegendentry{\(\gamma=0.9\)}

    \end{axis}
  \end{tikzpicture}%
}
\hfill
\subfloat[
  Limiting SAA policy density
  \label{fig:inventory-trajectory-normal}
]{%
  \begin{tikzpicture}
    \begin{axis}[
      inventory normal axis,
      yticklabels=\empty
    ]

      \addplot[gamma03]
      {
        exp(-x^2/(2*5.0857))/
        sqrt(2*pi*5.0857)
      };

      \addplot[gamma06]
      {
        exp(-x^2/(2*6.3511))/
        sqrt(2*pi*6.3511)
      };

      \addplot[gamma09]
      {
        exp(-x^2/(2*17.8405))/
        sqrt(2*pi*17.8405)
      };

    \end{axis}
  \end{tikzpicture}%
}

\caption{
Mean-zero Gaussian densities for the inventory model.
Panel~\textup{(a)} shows the densities with variances
\(\sigma_{\mathrm{DP},\gamma}^2(x_0)\), while
Panel~\textup{(b)} shows the densities with variances
\(\sigma_{\pi^\star,\gamma}^2(x_0)\).
}
\label{fig:inventory-clt}

\endgroup

\end{figure}

\section{Application to renewable harvesting}
\label{sec:renewable-resource-harvesting}
 
Renewable harvesting is a classical application of SOC \cite{EkerhovdFlamSteinshamn,GaigiLyVathScotti,NyassokeSadefoFono}. In this section, we consider the stochastic stock-recruitment model of \cite{Reed}. This section demonstrates how the general framework applies to a renewable harvesting model with multiplicative  uncertainty, nonlinear harvesting costs, and a concave stock-recruitment function.
 
In the renewable harvesting model of \cite{Reed}, the biomass
evolves according to $x_{t+1}=\xi_tg(x_t-h_t)$
where $x_t\in[0,\infty)$ denotes the biomass at time $t$,
$h_t \in [0, x_t]$ is the harvest amount, 
and
 $(\xi_t)$ is an
i.i.d.\ sequence supported on $[a,b]$
with $0<a<1<b$, unit mean, and a density on $[a,b]$. 
Moreover,  
the stock-recruitment function
\(g:[0,\infty)\to[0,\infty)\) is  continuous,
strictly concave, nondecreasing, and continuously differentiable on
\((0,\infty)\).  We denote by \(g'(0)\) its
right-hand derivative at \(0\), which exists in
\([0,\infty]\) by concavity and monotonicity. Following \cite{Reed}, we
also assume
\[
bg'(0)>1,
\quad 
\limsup_{x \to \infty} b g(x)/x < 1, 
\quad \text{and} \quad 
\lim_{x\to\infty}ag'(x)<1 .
\]
These conditions imply the
existence of finite numbers $r\coloneqq \max\{x:ag(x)\ge x\}$ and
$\overline{x}\coloneqq \max\{x:bg(x)\ge x\}$ such that, under zero harvesting, \(h_t=0\) for all \( t \geq 0\), 
the process \((x_t)_{t\ge0}\)  is recurrent in
$[r,\overline{x}]$;
see p.~351 in \cite{Reed}.

The harvesting model uses the unit harvesting cost $c(x) \coloneqq k/x^\theta$, where $k>0$ is a
cost parameter,
and $\theta\in(0,1)$. The zero-profit biomass level
$x_\infty=(k/p)^{1/\theta}$ is characterized by $c(x_\infty)=p$, 
where  $p>0$ is the unit selling price.
We assume $x_\infty\le r$. Since $g(0)\ge0$, the concavity of
$x\mapsto ag(x)-x$ implies its nonnegativity on the interval $[0,r]$,
so this assumption is equivalent to $ag(x_\infty)\ge x_\infty$.

We define the immediate harvesting reward
$R(x)\coloneqq p(x-x_\infty)-\int_{x_\infty}^x c(t)\,\mathrm{d}t$.
For $x_0 \in [x_\infty, \overline{x}]$ and $\gamma\in(0,1)$, the harvesting
problem is
\[
\max_{0 \le h_t \le x_t}
\mathbb{E}\!\left[\sum_{t\geq 0}\gamma^t
\big(R(x_t) - R(x_t - h_t)\big)\right]
\quad \text{s.t.} \quad  
x_{t+1} = \xi_t\,g(x_t - h_t).
\]

The following condition is a strict version of the monotonicity condition
used in \cite{Reed} to obtain a base-stock-type optimal policy;
see eq.~(3.4) and Theorem~1 in \cite{Reed}. Throughout the section, 
we assume that,  for every $\xi\in[a,b]$, 
\begin{equation}
\label{eq:reed-condition-A}
x \mapsto
\frac{p-c(\xi g(x))}{p-c(x)}g'(x)
\quad
\text{is nonnegative and strictly decreasing on }(x_\infty,\overline{x}].
\end{equation}
This condition says that the marginal effect of
retaining additional post-decision stock is positive but diminishing,
uniformly over the recruitment shock. 
We can show that \eqref{eq:reed-condition-A} is satisfied for  \(g(x)=\bar ax/(1+\bar bx)\) with constants $\bar a$, $\bar b > 0$, provided the
standing assumptions above hold;
cf.\ p.~356 in \cite{Reed}.

\subsection{Target-escapement reformulation}

By Theorem~1 in \cite{Reed}, under condition
\eqref{eq:reed-condition-A}, the original problem admits a base-stock-type
optimal policy \(h^\star(x)=[x-x^\star]_+\), where
\(x^\star\in[x_\infty,\overline x]\). 
We use this structural
result to reformulate the harvesting problem in a form amenable to our
theory.

The original harvest quantity \(h_t\) is constrained by the current
biomass \(x_t\). We therefore introduce a target-escapement
formulation. Let
\[
\mathcal X=[x_\infty,\overline x],
\quad
\mathcal U=[x_\infty,\overline x],
\quad \text{and} \quad 
\Xi=[a,b].
\]
Given \(x\in\mathcal X\) and \(u\in\mathcal U\), the target \(u\)
induces the post-harvest stock \(\min\{x,u\}\) and the harvest
\([x-u]_+\). Since the general framework of \Cref{sec-basic} is a
minimization framework, we use the negative one-period reward as the
stage cost. We define
\begin{align}
\label{eq:harvest-fF}
F(x,u,\xi)\coloneqq \xi g(\min\{x,u\})
\quad \text{and} \quad 
f(x,u,\xi)\coloneqq R(\min\{x,u\})-R(x).
\end{align}
The target-escapement problem is
\[
\inf_{u_t\in\mathcal U}
\mathbb{E}
\Big[
\sum_{t\geq 0}
\gamma^t f(x_t,u_t,\xi_t)
\Big]
\quad \text{s.t.} \quad
x_{t+1}=F(x_t,u_t,\xi_t).
\]

This target-escapement formulation restricts post-harvest stocks to
\([x_\infty,\overline x]\), whereas the original harvesting problem allows
post-harvest stocks in \([0,x]\). This restriction does not change the
optimal value for initial states \(x\in\mathcal X\). Indeed, by
Theorem~1 in \cite{Reed}, the original problem admits an optimal
constant-escapement policy \(h^\star(x)=[x-x^\star]_+\) with
\(x^\star\in[x_\infty,\overline x]\). The induced post-harvest stock is
\(s^\star(x)=\min\{x,x^\star\}\ge x_\infty\). Thus an optimal policy of
the original problem is feasible in the restricted target formulation.
Conversely, every target \(u\in\mathcal U=[x_\infty,\overline x]\)
induces the feasible harvest \([x-u]_+\) for the original problem. Hence
the restricted target-escapement formulation has the same optimal value
as the original harvesting problem, up to the change of sign.

Before verifying \Cref{inf-soc:ass-2}, let us note that
$bg(\overline x)=\overline x$. Indeed, 
if \(bg(\overline x)>\overline x\), then the continuity of \(x\mapsto bg(x)-x\)
implies \(bg(x)>x\) for some \(x>\overline x\), contradicting the maximality of \(\overline x\).

\begin{lemma}
\label{lem:harvest-assumption-1}
For the target-escapement formulation,
\Cref{inf-soc:ass-2} holds.
\end{lemma}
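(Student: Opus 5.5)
The plan is to check the three parts of \Cref{inf-soc:ass-2} directly, using only the standing assumptions on \(g\), \(c\), and \(\xi\). Two facts carry the argument: the invariance \(F(\mathcal X\times\mathcal U\times\Xi)\subseteq\mathcal X\), and bounded derivatives of \(g\) and \(R\) on \([x_\infty,\overline x]\). This interval stays away from \(0\), since \(x_\infty=(k/p)^{1/\theta}>0\).

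\emph{Compactness and invariance.} The sets \(\mathcal X=\mathcal U=[x_\infty,\overline x]\) and \(\Xi=[a,b]\) are compact intervals; here \(x_\infty\le r\le\overline x\) and \(\overline x<\infty\) by the standing assumptions. We must also check that \(F\) maps into \(\mathcal X\). Fix \(x,u\in[x_\infty,\overline x]\) and \(\xi\in[a,b]\), and set \(s\coloneqq\min\{x,u\}\in[x_\infty,\overline x]\).
\begin{itemize}
\item Upper bound: since \(g\ge0\) is nondecreasing and \(bg(\overline x)=\overline x\) (shown just before the lemma), we get \(\xi g(s)\le bg(\overline x)=\overline x\).
\item Lower bound: since \(g\) is nondecreasing and \(ag(x_\infty)\ge x_\infty\) (equivalent to the assumption \(x_\infty\le r\)), we get \(\xi g(s)\ge ag(x_\infty)\ge x_\infty\).
\end{itemize}
Hence \(F(x,u,\xi)\in\mathcal X\).

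\emph{Continuity.} The map \((x,u)\mapsto\min\{x,u\}\) is continuous, and \(g\) is continuous, so \(F(x,u,\xi)=\xi g(\min\{x,u\})\) is jointly continuous. The function \(R\) is \(C^1\) on \([x_\infty,\overline x]\), because \(c(t)=k/t^\theta\) is continuous there. Thus \(f(x,u,\xi)=R(\min\{x,u\})-R(x)\) is continuous; it does not depend on \(\xi\).

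\emph{Lipschitz bounds.} The map \((x,u)\mapsto\min\{x,u\}\) is \(1\)-Lipschitz in the norm \(|x-x'|+|u-u'|\), and any fixed norm on \(\mathbb R^2\) only changes constants.
\begin{itemize}
\item For \(F\): \(g\) is concave, nondecreasing, and continuously differentiable on \((0,\infty)\). On \([x_\infty,\overline x]\) this gives \(0\le g'(t)\le g'(x_\infty)<\infty\), using \(x_\infty>0\). Therefore \(|F(x,u,\xi)-F(x',u',\xi)|\le b\,g'(x_\infty)\,(|x-x'|+|u-u'|)\).
\item For \(f\): \(R'(t)=p-c(t)\). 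Since \(c\) is positive and decreasing with \(c(x_\infty)=p\), we have \(0\le R'(t)<p\) on \([x_\infty,\overline x]\). Hence \(R\) is \(p\)-Lipschitz there, and \(f\) is Lipschitz with constant \(2p\).
\end{itemize}
Taking the constant function \(K(\xi)\equiv\max\{2p,\,b\,g'(x_\infty)\}\) yields a trivially square-integrable Lipschitz modulus.

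I do not expect a genuine obstacle. The only points needing care are the invariance of \(\mathcal X\) under \(F\), which is exactly where the assumptions \(bg(\overline x)=\overline x\) and \(ag(x_\infty)\ge x_\infty\) enter, and finiteness of \(g'(x_\infty)\). The latter uses \(x_\infty>0\), which avoids the possibly infinite right derivative \(g'(0)\).
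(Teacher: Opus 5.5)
Your proposal is correct and follows essentially the same route as the paper. Both arguments use compactness of the intervals, invariance of $\mathcal X$ via $a g(x_\infty)\ge x_\infty$ and $b g(\overline x)=\overline x$, and Lipschitz continuity from the smoothness of $R$ and $g$ on $[x_\infty,\overline x]$ combined with the $1$-Lipschitz map $(x,u)\mapsto\min\{x,u\}$; your explicit constants and the observation that $x_\infty>0$ keeps $g'$ bounded only make precise what the paper leaves implicit.
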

\begin{proof}
(i)~The sets $\mathcal{X}$, $\mathcal{U}$, and $\Xi$ are compact  intervals. 

(ii)~The functions $f$ and $F$ are continuous because $R$ and $g$ are continuously differentiable and $(x,u)\mapsto\min\{x,u\}$ is continuous.
Let $x$, $u \in [x_\infty, \overline{x}]$.
Writing $s=\min\{x,u\}$, 
the nonnegativity and monotonicity of $g$, 
$\xi \in [a,b]$, and $x_\infty\le r$  imply that $\xi g(s)\ge ag(x_\infty)\ge x_\infty$
and $\xi g(s)\le bg(\overline{x})=\overline{x}$. 
Hence
$F(\mathcal{X}\times\mathcal{U}\times\Xi)\subset\mathcal{X}$.

(iii)~Since $R$ and $g$ are continuously differentiable on the compact interval $[x_\infty,\overline{x}]$ and $(x,u)\mapsto\min\{x,u\}$ is $1$-Lipschitz, both $f$ and $F$ are Lipschitz continuous with a deterministic Lipschitz constant.
\end{proof}

This strict diminishing-marginal-effect property in \eqref{eq:reed-condition-A}
ensures that the base-stock-type target $x^\star$ from above 
is unique and \Cref{ass:uniqueness-type} holds, as shown next.

\begin{lemma}
\label{lem:harvest-redundant-targets}
There exists a unique escapement level \(x^\star\in[x_\infty,\overline x]\)
such that the constant target policy \(u^\star(x)\equiv x^\star\) is
optimal for the target-escapement reformulation. Equivalently, the
induced optimal harvest is \(h^\star(x)=[x-x^\star]_+\), and the
induced optimal post-harvest stock is \(\min\{x,x^\star\}\). Moreover,
for every state \(x\in\mathcal X\) and every optimal control
\(u\in\mathcal U^\star(x)\), we have
\(\min\{x,u\}=\min\{x,x^\star\}\). Consequently,
\Cref{ass:uniqueness-type} holds with
\begin{equation} 
\label{eq:harvest-assumption-2} 
F^\star(x,\xi)\coloneqq \xi g(\min\{x,x^\star\}), 
\qquad f^\star(x,\xi)\coloneqq R(\min\{x,x^\star\})-R(x). 
\end{equation}
\end{lemma}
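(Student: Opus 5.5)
The plan is to reduce the action-value function to a function of the post-harvest stock and show it has a unique minimizer. Write $s=\min\{x,u\}$. Since $f(x,u,\xi)=R(s)-R(x)$ and $F(x,u,\xi)=\xi g(s)$ depend on $u$ only through $s$, we get
\[
[\mathcal Q(V)](x,u)=-R(x)+\Gamma(\min\{x,u\}),
\qquad
\Gamma(s)\coloneqq R(s)+\gamma\,\mathbb E_{\xi\sim P}\big[V(\xi g(s))\big],
\]
for $s\in[x_\infty,\overline x]$. As $u$ ranges over $\mathcal U=[x_\infty,\overline x]$, the post-harvest stock $s=\min\{x,u\}$ ranges exactly over $[x_\infty,x]$. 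Thus $\mathcal U^\star(x)=\{u:\min\{x,u\}\in\operatorname{argmin}_{s\in[x_\infty,x]}\Gamma(s)\}$. The whole lemma then reduces to one claim: $\Gamma$ has a unique global minimizer $x^\star$ on $[x_\infty,\overline x]$, and it is strictly decreasing on $[x_\infty,x^\star]$ and strictly increasing on $[x^\star,\overline x]$.

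Granting this claim, the remaining assertions are quick. For every $x$, the unique minimizer of $\Gamma$ on $[x_\infty,x]$ is $\min\{x,x^\star\}$, so every $u\in\mathcal U^\star(x)$ satisfies $\min\{x,u\}=\min\{x,x^\star\}$. The constant target $u\equiv x^\star$ is therefore optimal. Any other constant target $x^{\star\star}\neq x^\star$ fails at the state $x=\overline x$, because $\min\{\overline x,x^{\star\star}\}=x^{\star\star}\ne x^\star$; this gives uniqueness. \Cref{ass:uniqueness-type} then holds with \eqref{eq:harvest-assumption-2}, and the maps $f^\star,F^\star$ are continuous because $R$, $g$ and $\min$ are continuous.

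The main obstacle is the unimodality of $\Gamma$, which is where \eqref{eq:reed-condition-A} enters. I would follow Reed's argument (Theorem~1 in \cite{Reed}) and establish it in three steps.

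\textbf{Step 1 (form of $V$).} Reed's theorem gives the optimal constant-escapement policy with some level $x^\star$, and our reformulation has the same value. Hence on $[x^\star,\overline x]$ we have $V(y)=-R(y)+\Gamma(x^\star)$, so $V'(y)=-(p-c(y))$ there. On $[x_\infty,x^\star]$ no harvest is taken, so $V(y)=\Gamma(y)$.

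\textbf{Step 2 (sign of $\Gamma'$ above $x^\star$).} Differentiate under the expectation; this is legitimate because $V$ is Lipschitz and, via the fixed-point equation, piecewise $C^1$. Then
\[
\Gamma'(s)=(p-c(s))\Big(1-\gamma\,\mathbb E\Big[\tfrac{p-c(\xi g(s))}{p-c(s)}g'(s)\Big]\Big)
\]
wherever $\xi g(s)\ge x^\star$ almost surely.

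\textbf{Step 3 (strict unimodality).} By \eqref{eq:reed-condition-A}, the bracketed expectation is strictly decreasing in $s$. Together with $p-c(s)>0$ on $(x_\infty,\overline x]$, this makes $\Gamma'$ cross zero at most once and change sign from negative to positive. The part of the range where $\xi g(s)<x^\star$ with positive probability, so that $V=\Gamma$ is used inside the expectation, needs Reed's inductive value-iteration argument. I would try to run that argument directly: show that if $W$ satisfies $W'(y)\ge -(p-c(y))$ on a suitable set (a one-sided derivative inequality preserved by $\mathcal T$), then the same holds for $\mathcal T W$, and pass to the fixed point $V$. This step, adapting Reed's nonstrict argument to strict inequalities so that $\Gamma$ has no flat minimizing interval, is where I expect the real work to lie.
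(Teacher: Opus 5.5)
Your reduction is correct and matches the paper's. Set $W(z)\coloneqq -V(R^{-1}(z))$, $\mathsf G_\xi(v)\coloneqq R(\xi g(R^{-1}(v)))$ and $\mathsf H(v)\coloneqq -v+\gamma\mathbb E[W(\mathsf G_\xi(v))]$. Then $\Gamma(s)=-\mathsf H(R(s))$, and everything you derive ``granting this claim'' is fine. The gap is the claim itself.

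Step~2 uses $V'(y)=-(p-c(y))$ at $y=\xi g(s)$. That formula is available only when $\xi g(s)\ge x^\star$ for a.e.\ $\xi$, i.e.\ when $ag(s)\ge x^\star$. In general this misses a left portion of $[x_\infty,x^\star]$, namely all $s$ with $ag(s)<x^\star$, which is exactly where you need \emph{strict} decrease of $\Gamma$. Whenever $ag(x^\star)<x^\star$ (equivalently $x^\star>r$), it even misses a two-sided neighbourhood of $x^\star$. On that set $V$ is known only implicitly, and you defer it to an unspecified adaptation of Reed's induction.

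The invariant you propose there is also not the right one. A one-sided slope bound cannot exclude flat pieces of $\Gamma$. Moreover, on $[x_\infty,x^\star]$ the identity $V(y)+R(y)=\min_{s\in[x_\infty,y]}\Gamma(s)=\Gamma(y)$, with $\Gamma$ nonincreasing there, gives $V'\le-(p-c)$. So the bound $V'\ge-(p-c)$ could hold there only where $\Gamma$ is flat, which is precisely what you are trying to rule out.

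The missing idea is concavity in Reed's transformed variable, which removes any need for an explicit form or differentiability of $V$:
\begin{enumerate}
\item Reed's proof of Theorem~1 in \cite{Reed} shows that $W$ is continuous, concave and nondecreasing on $[0,M]$, where $M\coloneqq R(\overline x)$.
\item Condition \eqref{eq:reed-condition-A} says exactly that $\mathsf G_\xi'(R(y))=\xi\,\frac{p-c(\xi g(y))}{p-c(y)}\,g'(y)$ is nonnegative and strictly decreasing. Hence $\mathsf G_\xi$ is nondecreasing and strictly concave on $(0,M]$.
\item A feasibility argument shows $W$ is strictly increasing: a maximizer $v_1$ for $z_1$ is feasible for $z_2>z_1$, so $W(z_2)\ge W(z_1)+z_2-z_1$.
\item Consequently $W\circ\mathsf G_\xi$, and hence $\mathsf H$, is concave on $[0,M]$ and strictly concave on $(0,M]$. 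Thus $\mathsf H$ has a unique maximizer $v^\star$ and is strictly monotone on either side of it.
\end{enumerate}
Under $\Gamma=-\mathsf H\circ R$, this is exactly your unimodality claim with $x^\star=R^{-1}(v^\star)$.

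Minor slips:
\begin{enumerate}
\item On $[x_\infty,x^\star]$ one has $V=\Gamma-R$, not $V=\Gamma$.
\item Your formula for $\Gamma'$ lacks the factor $\xi$ inside the expectation; this is harmless since $\xi\ge a>0$.
\item The piecewise-$C^1$ property of $V$ is asserted without proof.
\end{enumerate}
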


\begin{proof}
For fixed \(x\in\mathcal X\), the feasible target controls are
\(u\in\mathcal U=[x_\infty,\overline x]\). Since
\(x\in[x_\infty,\overline x]\), the map
\(u\mapsto s=\min\{x,u\}\) has image \([x_\infty,x]\). Both the
one-stage cost and the dynamics depend on \(u\) only through this
post-harvest stock \(s\). Thus, after removing redundant controls, the
choice of a target \(u\) is equivalent to the choice of a post-harvest
stock \(s\in[x_\infty,x]\).

Following \cite{Reed}, we first derive the DP equation
for a transformation of the original reward maximization problem. 
Since $R'(s)=p-c(s)>0$ on $(x_\infty,\overline{x}]$, the map
$R:[x_\infty,\overline{x}]\to[0,M]$ is strictly increasing where $M\coloneqq R(\overline{x})$. 
Hence, its inverse \(R^{-1}\) exists. 
Moreover, it is continuously differentiable on \((0,M]\).
For $\xi \in [a,b]$, we define
$\mathsf{G}_\xi \colon [0, M] \to [0, M]$ by
\[
\mathsf{G}_\xi(v)\coloneqq R\bigl(\xi g(R^{-1}(v))\bigr).
\]
Let $W$  denote the normalized reward maximization value function in the transformed
stock variable (cf.\ eq.~(3.3) in \cite{Reed}), that is, 
\[
W(z)=\max_{0\le v\le z}
\bigl\{z-v+\gamma\mathbb{E}_{\xi\sim P}[W(\mathsf{G}_\xi(v))]\bigr\},
\qquad z\in[0,M].
\]
The proof of Theorem~1 in \cite{Reed}
shows that $W$ is continuous, concave, and nondecreasing.

We show that $W$ is strictly increasing. 
Let us define $\mathsf{H} \colon [0, M] \to \mathbb{R}$ by
\[
\mathsf{H}(v)\coloneqq -v+\gamma\mathbb{E}_{\xi\sim P}[W(\mathsf{G}_\xi(v))].
\]
If $0\le z_1<z_2\le M$ and
$v_1$ is optimal for $z_1$, then $v_1\le z_1<z_2$, so $v_1$ is feasible for
$z_2$. Therefore
\[
W(z_2)\ge
z_2 + \mathsf{H}(v_1)
=
W(z_1)+z_2-z_1>W(z_1).
\]
Thus \(W\) is strictly increasing.

Next, we show that the function $\mathsf{H}$
has a unique maximizer on $[0,M]$.
For $y\in(x_\infty,\overline{x}]$ and $v=R(y)$,
\[
\mathsf{G}_\xi'(R(y))
=\xi\frac{p-c(\xi g(y))}{p-c(y)}g'(y).
\]
Since \(\xi \geq a >0\), \(R\) is strictly increasing, the condition in
\eqref{eq:reed-condition-A}  implies that \(\mathsf{G}_\xi\) is nondecreasing and strictly concave
as a function of \(v\in(0,M]\). Therefore
\(\mathsf{G}_\xi\) is nondecreasing and strictly concave on \((0,M]\), and it
extends continuously to \([0,M]\).
Since \(\mathsf{G}_\xi\) is strictly concave on \((0,M]\) and continuous on
\([0,M]\), it is concave on \([0,M]\). Because \(W\) is concave and
nondecreasing, \(W\circ \mathsf{G}_\xi\) is concave on \([0,M]\). Moreover, since
\(W\) is strictly increasing and \(\mathsf{G}_\xi\) is strictly concave on
\((0,M]\), the composition \(W\circ \mathsf{G}_\xi\) is strictly concave on
\((0,M]\). Taking expectations preserves these properties, and adding the affine term
\(-v\) gives that \(\mathsf{H}\) is concave on \([0,M]\) and strictly concave on
\((0,M]\). Since \(\mathsf{H}\) is continuous on the compact interval \([0,M]\),
it has a maximizer; concavity on \([0,M]\) and strict concavity on
\((0,M]\) imply that this maximizer is unique. We denote it by
\(v^\star\in[0,M]\).

Fix \(x\in\mathcal X\). The feasible post-harvest stocks are
\(s\in[x_\infty,x]\). Since \(R\) is strictly increasing and
\(R(x_\infty)=0\), the change of variables \(v=R(s)\) maps this feasible
set onto \([0,R(x)]\).  Since \(\mathsf{H}\) is strictly concave
and has the unique maximizer \(v^\star\) on \([0,M]\), \(\mathsf{H}\) is
nondecreasing on \([0,v^\star]\) and nonincreasing on
\([v^\star,M]\). Hence its unique maximizer over \([0,R(x)]\) is
\(\min\{R(x),v^\star\}\).
Set
$x^\star\coloneqq R^{-1}(v^\star)$. Since $R$ is increasing, the unique optimal post-decision
stock is
\[
R^{-1}(\min\{R(x),v^\star\})=\min\{x,x^\star\}.
\]
Therefore every optimal control $u\in\mathcal U^\star(x)$ satisfies
$\min\{x,u\}=\min\{x,x^\star\}$. Substituting this identity into
\eqref{eq:harvest-fF} yields \eqref{eq:harvest-assumption-2}
for every optimal control. Hence \Cref{ass:uniqueness-type} holds.
\end{proof}

\subsection{Harvesting CLT}
 
We now verify the remaining hypotheses of \Cref{thm:inf-soc:clt} and obtain the corresponding central limit theorem.
 
\begin{proposition}[Harvesting  CLT]
\label{prop:harvest-clt}
For the target-escapement reformulation introduced above,
\[
  N^{1/2}(\hat V_N - V) \rightsquigarrow \mathfrak{G} \quad\text{in} \quad  C(\mathcal{X}),
\]
where $\mathfrak{G}$ is the unique centered Gaussian 
process on $\mathcal{X}$ satisfying
\[
  \mathfrak{G}(x) = \mathfrak{H}(x) + \gamma\,\mathbb{E}_{\xi \sim P}\bigl[\mathfrak{G}(\xi\,g(\min\{x,x^\star\}))\bigr], \qquad x \in \mathcal{X}.
\]
\end{proposition}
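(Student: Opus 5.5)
We mirror the proof of \Cref{prop:inventory-clt} and verify the hypotheses of \Cref{thm:inf-soc:clt}, including condition \eqref{eq:inf-soc:fixed-point-linearization}, through the two remainders \eqref{eq:inf-soc:linearization-error} and \eqref{eq:inf-soc:continuity}. \Cref{inf-soc:ass-2} is \Cref{lem:harvest-assumption-1}. \Cref{ass:uniqueness-type} is \Cref{lem:harvest-redundant-targets}, with $F^\star(x,\xi)=\xi g(\min\{x,x^\star\})$. Once the theorem applies, the fixed point characterization follows: $\mathfrak G=(I-\gamma\mathcal L)^{-1}\mathfrak H$ is equivalent to $\mathfrak G=\mathfrak H+\gamma\mathcal L\mathfrak G$, which is the displayed equation after inserting $F^\star$. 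Uniqueness holds because $I-\gamma\mathcal L$ is invertible, and $\mathfrak G$ is Gaussian and centered as a continuous linear image of $\mathfrak H$.

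For the linearization \eqref{eq:inf-soc:linearization-error}, we plan to use \Cref{prop:main-linearization-conditions}(ii), which requires compactness of $[\mathcal AW](x,u)=\mathbb E[W(\xi g(s))]$ with $s=\min\{x,u\}$. The noise is multiplicative rather than additive. If $g(x_\infty)>0$, we take logarithms: $\xi g(s)=\exp(\log g(s)+\log\xi)$, where $\log\xi$ has a density. Then \Cref{lemma:additive-noise-transition-linearization} applies with $G=\log g(\min\{x,u\})$ and $\kappa=\exp$. The map $\kappa$ is continuous and maps into $\mathcal X$ by the invariance shown in \Cref{lem:harvest-assumption-1}. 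The invariance argument gives $g(s)\ge g(x_\infty)\ge x_\infty/a>0$, so positivity holds automatically. Alternatively, we could write $[\mathcal AW](x,u)=\int W(y)\,p_\xi(y/g(s))/g(s)\,\mathrm dy$ and apply Arzel\`a--Ascoli directly, using $L^1$-continuity of translations and dilations of the density.

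For the stochastic equicontinuity \eqref{eq:inf-soc:continuity}, we use \Cref{prop:main-equicontinuity-conditions}(i), since $\mathcal A$ is compact. It remains to verify \eqref{eq:inf-soc:q-continuity}, for which we apply \Cref{lem:Lipschitz-equicontinuity}(i). Here $\mathcal X$ and $\Xi$ are intervals, and $F(x,u,\cdot)=\xi g(\min\{x,u\})$ is nondecreasing in $\xi$ because $g\ge0$. We take $\mathcal V$ to be the set of nonincreasing $W\in C(\mathcal X)$ with $\|W\|_\infty\le(1-\gamma)^{-1}\|f\|_\infty$. This set is closed, uniformly bounded, and contains $V$ and $\hat V_N$ almost surely. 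Boundedness comes from the contraction property.

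\textbf{Main obstacle: monotonicity of $\hat V_N$.} We show that $\hat{\mathcal T}_N$ (and likewise $\mathcal T$) maps nonincreasing functions to nonincreasing functions; the fixed point then inherits monotonicity, as it is the sup-norm limit of iterates starting from $0$, and $\mathcal V$ is closed. Write $\hat{\mathcal T}_NW(x)=-R(x)+\min_{s\in[x_\infty,x]}\{R(s)+\gamma N^{-1}\sum_i W(\xi^{(i)}g(s))\}$. As $x$ increases, the feasible set grows, so the min term is nonincreasing. Since $R$ is increasing, $-R(x)$ is decreasing. Hence $\hat{\mathcal T}_NW$ is nonincreasing for every $W$. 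This matches the cost being the negative of the reward: more biomass yields a lower cost.
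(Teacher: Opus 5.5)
Your proposal is correct and follows essentially the same route as the paper: you verify the two assumptions through the harvesting lemmas, obtain compactness of $\mathcal A$ from the logarithmic change of variables and \Cref{lemma:additive-noise-transition-linearization}, and obtain stochastic equicontinuity from \Cref{prop:main-equicontinuity-conditions}(i) and \Cref{lem:Lipschitz-equicontinuity}(i) with the class of bounded nonincreasing functions. Your explicit bound $g\ge g(x_\infty)\ge x_\infty/a>0$ and your derivation of the fixed-point equation from $(I-\gamma\mathcal L)^{-1}\mathfrak H$ are useful additions; the iterate argument is unnecessary, because you already show that $\hat{\mathcal T}_N W$ is nonincreasing for every $W$.
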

 
\begin{proof}
We verify the hypotheses of the CLT in \Cref{thm:inf-soc:clt}. We have already verified \Cref{inf-soc:ass-2,ass:uniqueness-type}; see \Cref{lem:harvest-assumption-1,lem:harvest-redundant-targets}.
 
Now, we verify the linearization condition in \eqref{eq:inf-soc:linearization-error} using \Cref{prop:main-linearization-conditions}. To this end, we verify the compactness condition of the operator $\mathcal{A}$ defined in \eqref{eq:A-operator}. The noise enters multiplicatively, so we pass to additive form: with $\zeta \coloneqq  \log\xi$, which has a density on $[\log a, \log b]$, and $G(x,u) \coloneqq  \log g(\min\{x,u\})$ continuous (as $g>0$ on $[x_\infty,\overline{x}]$), the operator $\mathcal{A}$ is given by
\[
  [\mathcal{A}W](x,u) = \mathbb{E}_{\xi \sim P}[W(\xi\,g(\min\{x,u\}))] = \mathbb{E}_{\zeta}[W(\exp(G(x,u)+\zeta))].
\]
Since $\zeta$ has a density, \Cref{lemma:additive-noise-transition-linearization} ensures compactness of $\mathcal{A}$.
 
Next, we verify the stochastic equicontinuity-type condition \eqref{eq:inf-soc:continuity} using \Cref{prop:main-equicontinuity-conditions,lem:Lipschitz-equicontinuity}. Since $f$ is bounded on the compact set $\mathcal{X}\times\mathcal{U}\times\Xi$, let $M_f \coloneqq  \|f\|_\infty$. The contraction property ensures $V$ and, with probability one, $\hat V_N$ have a sup norm of at most $(1-\gamma)^{-1}M_f$. 

To show that \(V\) and \(\hat V_N\) are nonincreasing, let
\(W\in C(\mathcal X)\). For fixed \(x\in\mathcal X\), the feasible
controls are \(u\in\mathcal U=[x_\infty,\overline x]\), and the
map \(u\mapsto \min\{x,u\}\) has image \([x_\infty,x]\). Since the
stage cost and dynamics depend on \(u\) only through the post-harvest
stock \(s\), the DP operator can be written as
\[
[\mathcal T W](x)
=
-R(x)
+
\min_{s\in[x_\infty,x]}
\left\{
R(s)+\gamma \mathbb{E}_{\xi \sim P}[W(\xi g(s))]
\right\}.
\]
The term \(-R(x)\) is nonincreasing because \(R\) is strictly
increasing. The minimum term is also nonincreasing in \(x\): if
\(x_1\le x_2\), then
\([x_\infty,x_1]\subseteq[x_\infty,x_2]\), and therefore
\[
\min_{s\in[x_\infty,x_2]}
\left\{
R(s)+\gamma\,\mathbb{E}_{\xi \sim P}[W(\xi g(s))]
\right\}
\le
\min_{s\in[x_\infty,x_1]}
\left\{
R(s)+\gamma\,\mathbb{E}_{\xi \sim P}[W(\xi g(s))]
\right\}.
\]
Hence \(\mathcal T W\) is nonincreasing for every
\(W\in C(\mathcal X)\).

The SAA DP operator has the same representation, with
the expectation replaced by the sample average. Hence
\(\hat{\mathcal T}_N W\) is also nonincreasing for every
\(W\in C(\mathcal X)\), with probability one. 
Therefore, with probability one,
\[
V,\hat V_N\in
\mathcal V
\coloneqq
\left\{
W\in C(\mathcal X):
\|W\|_\infty\le (1-\gamma)^{-1}M_f,
\quad
W \text{ is nonincreasing}
\right\}.
\]

Moreover, for each \((x,u)\in\mathcal X\times\mathcal U\), the map
\(\xi\mapsto F(x,u,\xi)=\xi g(\min\{x,u\})\) is nondecreasing on
\(\Xi=[a,b]\). Since \(\mathcal X\) and \(\Xi\) are intervals,
\Cref{lem:Lipschitz-equicontinuity}(i) yields
\eqref{eq:inf-soc:q-continuity}. Since \(\mathcal A\) is compact,
\Cref{prop:main-equicontinuity-conditions} implies
\eqref{eq:inf-soc:continuity}.
 
Having verified the hypotheses of \Cref{thm:inf-soc:clt}, we obtain the CLT.
\end{proof}

\section{Discussion}
\label{sec:discussion}

We developed a statistical limit theory for data-driven infinite-horizon SOC in which the estimator is defined by an empirical DP equation. If, for example, the optimal policy is unique, then the Gaussian limit law satisfies a linear fixed point equation, resembling the DP principle. For nonunique optimal policies, the limit law may be non-Gaussian but still solves a fixed point equation.
We have used the functional CLT to compute 
the variance of the SAA optimal value and compared it with the variance of the total discounted cost.
A simple example shows that these variances can be quite different, especially for discount factors close to one. 

Several extensions remain open. The i.i.d.\ noise assumption could be
relaxed to Markov-modulated uncertainty \cite{Chen2001,Milz2026,Sethi1997}. Verifying the hypotheses of
the CLT also remains challenging in general continuous-state
models; the sufficient conditions used here rely on compactness and
Donsker-type entropy arguments that are most transparent in low
dimension. Post-stage
or reduced-state formulations of stochastic dynamic programs
\cite{Pennanen2025,Powell2011}
could be useful to obtain CLTs for larger problem
classes.
It would also be interesting to construct confidence intervals for optimal values, for example using subsampling methods along the lines of those studied for two-stage stochastic programming in \cite{Eichhorn2007}.

\section*{Acknowledgments}

We are grateful to Prof.\ Alexander Shapiro for many insightful discussions. 
The authors acknowledge the use of ChatGPT 5.5 and 5.6, and Claude (Opus 4.8) for assistance with language editing, organization, and
presentation.  
The authors reviewed and revised
all generated text and take responsibility for the final content.

\appendix

\section{Proofs for \Cref{sec-basic}}

\subsection{Proof of \Cref{thm:inf-soc:clt}}
\label{app:main-clt-proofs}

We prepare our proof of \Cref{thm:inf-soc:clt}.
While value functions are generally
not Lipschitz continuous
(see, e.g., Example~1 in \cite{Wang2026}),
we show that they are 
H\"older continuous.

\begin{lemma}[H\"older continuity of population value function]
\label{lem:inf-soc:holder-value}
If \Cref{inf-soc:ass-2} holds, 
then for all sufficiently small \(\alpha\in(0,1]\), 
$V$ is H\"older continuous with exponent $\alpha$.
\end{lemma}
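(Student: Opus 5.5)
We plan to show that, for suitable \(\alpha\), the Bellman operator \(\mathcal T\) maps a closed set of H\"older functions into itself. Then the fixed point \(V\), being the limit of value iteration started in that set, also lies in it. For \(\alpha\in(0,1]\) and \(C\ge0\), let
\[
\mathcal H_{\alpha,C}\coloneqq\Bigl\{W\in C(\mathcal X):\ |W(x)-W(x')|\le C\|x-x'\|^\alpha\ \text{for all } x,x'\in\mathcal X\Bigr\}.
\]
This set is closed in \(C(\mathcal X)\) under the sup norm. We start value iteration from \(W_0\equiv0\in\mathcal H_{\alpha,C}\). Since \(\mathcal T\) is a \(\gamma\)-contraction, \(\mathcal T^kW_0\to V\) uniformly, so invariance \(\mathcal T(\mathcal H_{\alpha,C})\subseteq\mathcal H_{\alpha,C}\) yields \(V\in\mathcal H_{\alpha,C}\).

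To prove invariance, fix \(W\in\mathcal H_{\alpha,C}\) and \(x,x'\in\mathcal X\). We use the elementary bound \(|\min_u q(x,u)-\min_u q(x',u)|\le\sup_u|q(x,u)-q(x',u)|\). It gives
\[
|[\mathcal TW](x)-[\mathcal TW](x')|
\le \sup_{u}\ \mathbb E\bigl[|f(x,u,\xi)-f(x',u,\xi)|+\gamma|W(F(x,u,\xi))-W(F(x',u,\xi))|\bigr].
\]
By \Cref{inf-soc:ass-2}(iii), the first term is at most \(\mathbb E[K]\,\|x-x'\|\). Because \(\mathcal X\) is compact with diameter \(D\), we have \(\|x-x'\|\le D^{1-\alpha}\|x-x'\|^\alpha\). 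The second term is at most
\[
\gamma C\,\mathbb E[K(\xi)^\alpha]\,\|x-x'\|^\alpha .
\]
Altogether,
\[
|[\mathcal TW](x)-[\mathcal TW](x')|\le\bigl(\mathbb E[K]D^{1-\alpha}+\gamma C\,\mathbb E[K^\alpha]\bigr)\|x-x'\|^\alpha .
\]

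The key step, and the only real obstacle, is closing the recursion. This requires \(\gamma\,\mathbb E[K^\alpha]<1\). By Jensen's inequality, \(\mathbb E[K^\alpha]\le(\mathbb E[K])^\alpha\), and \(\mathbb E[K]<\infty\) since \(K\) is square integrable. If \(\mathbb E[K]\le1\), every \(\alpha\in(0,1]\) works. Otherwise \((\mathbb E[K])^\alpha\to1\) as \(\alpha\downarrow0\), so \(\gamma(\mathbb E[K])^\alpha<1\) for all sufficiently small \(\alpha\). This is exactly where ``sufficiently small'' enters, and we cannot expect \(\alpha=1\) in general, because \(\gamma\mathbb E[K]\) may exceed \(1\).

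For such \(\alpha\), we choose \(C\ge \mathbb E[K]D^{1-\alpha}/(1-\gamma\,\mathbb E[K^\alpha])\). Then \(\mathcal TW\in\mathcal H_{\alpha,C}\), which gives invariance and hence the claim.

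The measure-theoretic details are routine. Integrability of \(K^\alpha\) follows from \(K\in L^2\). Measurability of \(\xi\mapsto W(F(x,u,\xi))\) follows from continuity of \(F\). Since all Lipschitz constants are pointwise in \(\xi\), we take expectations only after bounding.
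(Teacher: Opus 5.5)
Your proof is correct and follows essentially the same route as the paper. You bound the H\"older seminorm of $\mathcal{T}W$ by $D^{1-\alpha}\mathbb{E}[K]+\gamma\,\mathbb{E}[K^\alpha]\,[W]_\alpha$, pick $\alpha$ with $\gamma\,\mathbb{E}[K^\alpha]<1$, and pass to the limit along value iteration from $W_0\equiv 0$; your invariant H\"older ball is equivalent to the paper's recursion $L_{k+1}\le A+\beta L_k$. The only difference is how you choose $\alpha$: you use Jensen's inequality $\mathbb{E}[K^\alpha]\le(\mathbb{E}[K])^\alpha$, which gives the explicit threshold $\alpha<\log(1/\gamma)/\log\mathbb{E}[K]$ when $\mathbb{E}[K]>1$, whereas the paper uses dominated convergence $\mathbb{E}[K^\alpha]\to\mathrm{Prob}\{K>0\}$; both work.
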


\begin{proof}
Let 
$\operatorname{diam}(\mathcal{X})
\coloneqq 
\sup_{x,x' \in \mathcal{X}} \|x-x'\|_2$.
If \(\operatorname{diam}(\mathcal X)=0\), the claim 
holds. Set
\(D\coloneqq \operatorname{diam}(\mathcal X)>0\).
We first choose the H\"older exponent. For \(\alpha\in(0,1]\), we have
\(K(\xi)^\alpha\le 1+K(\xi)^2\). Moreover,
\(K(\xi)^\alpha\to\mathbf 1_{\{K(\xi)>0\}}\) as \(\alpha\downarrow0\).
Thus, by dominated convergence,
$
\mathbb{E}_{\xi\sim P}[K(\xi)^\alpha]
\to
\mathrm{Prob}\{K(\xi)>0\}
$.
Since \(\gamma<1\), we can choose \(\alpha\in(0,1]\) such that
\(\beta\coloneqq\gamma\mathbb{E}_{\xi\sim P}[K(\xi)^\alpha]<1\).

Let \(W\in C(\mathcal X)\) be \(\alpha\)-H\"older continuous, with
seminorm \([W]_\alpha\). For \(x,x'\in\mathcal X\),
\[
\begin{aligned}
|(\mathcal TW)(x)-(\mathcal TW)(x')|
&\le
\sup_{u\in\mathcal U}
\mathbb{E}\Big[
|f(x,u,\xi)-f(x',u,\xi)|  \\
&\qquad\qquad
+\gamma |W(F(x,u,\xi))-W(F(x',u,\xi))|
\Big]  \\
&\le
\mathbb{E}_{\xi\sim P}[K(\xi)]\|x-x'\|_2
+\gamma[W]_\alpha\mathbb{E}_{\xi\sim P}[K(\xi)^\alpha]\|x-x'\|_2^\alpha .
\end{aligned}
\]
Since \(\|x-x'\|_2\le D\), we have
\(\|x-x'\|_2\le D^{1-\alpha}\|x-x'\|_2^\alpha\). Hence
\[
[\mathcal TW]_\alpha
\le
D^{1-\alpha}\mathbb{E}_{\xi\sim P}[K(\xi)]
+
\beta[W]_\alpha .
\]

Now let \(V_0\coloneqq0\) and \(V_{k+1}\coloneqq\mathcal TV_k\).
Writing \(L_k\coloneqq[V_k]_\alpha\), the preceding bound gives
\(L_{k+1}\le A+\beta L_k\), where
\(A\coloneqq D^{1-\alpha}\mathbb{E}_{\xi\sim P}[K(\xi)]\). Since \(L_0=0\),
\[
L_k
\le
A\sum_{j=0}^{k-1}\beta^j
\le
\frac{A}{1-\beta}.
\]
Thus, for all \(k\ge1\),
\[
|V_k(x)-V_k(x')|
\le
\frac{D^{1-\alpha}\mathbb{E}_{\xi\sim P}[K(\xi)]}
{1-\gamma\mathbb{E}_{\xi\sim P}[K(\xi)^\alpha]}
\|x-x'\|_2^\alpha .
\]
Since \(\mathcal T\) is a \(\gamma\)-contraction,
\(V_k\to V\) uniformly. Passing to the limit gives
the H\"older continuity.
\end{proof}

We recall the notion of Hadamard directional
differentiability tangentially to a set; see p.~461 in \cite{SDR}.
Let \(\mathbb B_1\) and \(\mathbb B_2\) be Banach spaces, let
\(\mathbb{K} \subseteq\mathbb B_1\), let \(G:\mathbb 
B_1 \to\mathbb B_2\), and let
\(\mu\in \mathbb{K}\). We say that \(G\) is Hadamard directionally
differentiable at \(\mu\) tangentially to \(\mathbb{K}\) if there exists a map
\(G'(\mu; \cdot)\) such that, for every \(t_N\downarrow0\) and every \(s_N\to s\)
such that \(\mu+t_Ns_N\in \mathbb{K}\),
\[
t_N^{-1}\big(G(\mu+t_Ns_N)-G(\mu)\big)
\to
G'(\mu; s)
\quad
\text{in} \quad  \mathbb B_2 .
\]
If $\mathbb{K} = \mathbb{B}_1$,
then $G$ is said to be
Hadamard directionally differentiable
at $\mu$.
If \(\mathbb{K}\) is a closed linear subspace of \(\mathbb B_1\), and \(G'(\mu; \cdot)\)
is linear and continuous on it, then \(G\) is Hadamard differentiable
at \(\mu\) tangentially to \(\mathbb{K}\).

We recall the definition of 
the inf operator $\Psi$ from 
\eqref{eq:inf-operator}.
We define the solution set
\begin{align}
\label{eq:inf-operator-solution-set}
\mathscr{U}^\star(q;x)
\coloneqq 
\operatorname*{argmin}_{u\in\mathcal U}q(x,u).
\end{align}
For all $x \in \mathcal{X}$, 
we have
$\mathcal{U}^\star(x) = \mathscr{U}^\star(\mathcal Q(V);x)$,
where $\mathcal{U}^\star$ is the solution set
from \eqref{eq:solutionset}.
For \(q\in C(\mathcal X\times\mathcal U)\), we define
\begin{align}
S^\star(q)
\coloneqq
\left\{
h\in C(\mathcal X\times\mathcal U):
h(x,u)=h(x,v)
\text{ whenever }
u,v\in\mathscr{U}^\star(q;x)
\right\}.
\end{align}
This closed linear subspace consists of  directions that do
not distinguish between optimal controls at the same state.
We also recall the definition of 
 \(\mathcal L\) from \eqref{eq:L}.

\begin{lemma}
\label{lem:infimum-map-derivative}
If 
\Cref{inf-soc:ass-2}\textup{(i)} holds,
and
\(q\in C(\mathcal X\times\mathcal U)\),
then the  following holds.
\begin{enumerate}[label=\textup{(\roman*)},nosep]
    \item 
The map \(\Psi\) is Hadamard  differentiable at \(q\)
tangentially to \(S^\star(q)\), with 
\[
[\Psi'(q;h)](x)
=
h(x,u),
\qquad
u\in\mathscr{U}^\star(q;x),
\quad h\in S^\star(q).
\]
\item  If
\(\mathscr{U}^\star(q;\cdot)\)
is lower hemicontinuous, then $\Psi$ is
Hadamard directionally differentiable at \(q\), with 
\[
[\Psi'(q;h)](x)
=
\min_{u\in\mathscr{U}^\star(q;x)}h(x,u).
\]
\item If, in addition,
\Cref{inf-soc:ass-2,ass:uniqueness-type} hold, then
\(\mathcal T\) is Hadamard
differentiable at \(V\), with
$
\mathcal T'(V;W)=\gamma\mathcal LW
$, 
$W\in C(\mathcal X)$.
\end{enumerate}
\end{lemma}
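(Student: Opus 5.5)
The plan is to prove a single uniform two-sided estimate for the perturbed minimum, from which (i) and (ii) follow, and then to deduce (iii) by the chain rule. Fix \(q\in C(\mathcal X\times\mathcal U)\), sequences \(t_N\downarrow0\) and \(h_N\to h\) in \(C(\mathcal X\times\mathcal U)\), and set \(q_N\coloneqq q+t_Nh_N\).

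\emph{Upper bound.} For any \(x\) and any \(u\in\mathscr U^\star(q;x)\),
\[
t_N^{-1}\bigl([\Psi(q_N)](x)-[\Psi(q)](x)\bigr)\le h_N(x,u).
\]
Hence the difference quotient is bounded above by \(\min_{u\in\mathscr U^\star(q;x)}h(x,u)+\|h_N-h\|_\infty\), uniformly in \(x\).

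\emph{Lower bound.} Choose \(u_N(x)\in\mathscr U^\star(q_N;x)\). Then
\[
t_N^{-1}\bigl([\Psi(q_N)](x)-[\Psi(q)](x)\bigr)\ge h_N(x,u_N(x)),
\]
because \(q(x,u_N(x))\ge[\Psi(q)](x)\). I will argue by contradiction: suppose there exist \(\varepsilon>0\) and \(x_N\) with \(h(x_N,u_N(x_N))<\min_{u\in\mathscr U^\star(q;x_N)}h(x_N,u)-\varepsilon\). By compactness of \(\mathcal X\times\mathcal U\), pass to a subsequence with \(x_N\to x\) and \(u_N(x_N)\to\bar u\). Since \(q_N\to q\) uniformly, the limit satisfies \(\bar u\in\mathscr U^\star(q;x)\), as argmin sets are upper semicontinuous under uniform convergence.

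\emph{Case (i), \(h\in S^\star(q)\).} Here \(h(x,\cdot)\) is constant on \(\mathscr U^\star(q;x)\). Uniform continuity of \(h\) gives \(h(x_N,u_N(x_N))\to h(x,\bar u)\). The right-hand side is also controlled, but I must compare it at \(x_N\) with its value at \(x\), and upper semicontinuity alone gives the wrong direction. In case (i) I will avoid this by using the upper bound with any \(v_N\in\mathscr U^\star(q;x_N)\): along a further subsequence \(v_N\to\bar v\in\mathscr U^\star(q;x)\), so \(h(x_N,v_N)\to h(x,\bar v)=h(x,\bar u)\) by constancy. This contradicts the assumed gap, so the quotient converges uniformly to \(h(x,u)\) for \(u\in\mathscr U^\star(q;x)\). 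The limit is linear in \(h\), well defined, and continuous, with norm at most one, so \(\Psi\) is Hadamard differentiable tangentially to \(S^\star(q)\). One point needs care: the definition requires \(h_N\to h\) with \(q+t_Nh_N\in q+S^\star(q)\), so the \(h_N\) lie in \(S^\star(q)\). I will only use that the limit \(h\) does, and the argument above relies only on \(h\).

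\emph{Case (ii).} Lower hemicontinuity supplies, for \(\bar u\in\mathscr U^\star(q;x)\), points \(w_N\in\mathscr U^\star(q;x_N)\) with \(w_N\to\bar u\). Then \(\min_{\mathscr U^\star(q;x_N)}h(x_N,\cdot)\le h(x_N,w_N)\to h(x,\bar u)\), while \(h(x_N,u_N(x_N))\to h(x,\bar u)\), which contradicts the \(\varepsilon\)-gap. The same lower hemicontinuity also shows that the limit \(x\mapsto\min_{\mathscr U^\star(q;x)}h(x,\cdot)\) is continuous, because it is upper semicontinuous by lower hemicontinuity and lower semicontinuous by upper hemicontinuity. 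This is the main obstacle: obtaining uniformity in \(x\), which is exactly where the compactness and contradiction argument is needed.

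\emph{(iii).} Write \(\mathcal T=\Psi\circ\mathcal Q\). The map \(\mathcal Q\) is affine with continuous linear part \(\gamma\mathcal A\), so it is Hadamard differentiable. By Assumption~\ref{ass:uniqueness-type}, \(\mathcal AW(x,u)=\mathbb E[W(F^\star(x,\xi))]\) for every \(u\in\mathcal U^\star(x)\), so \(\gamma\mathcal AW\in S^\star(\mathcal Q(V))\). If \(W_N\to W\), then
\[
t_N^{-1}\bigl(\mathcal Q(V+t_NW_N)-\mathcal Q(V)\bigr)=\gamma\mathcal AW_N\to\gamma\mathcal AW.
\]
The sequence \(\gamma\mathcal AW_N\) need not lie in \(S^\star\), but the argument for (i) used only that the limit lies in \(S^\star\), so it applies verbatim. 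This gives \(\mathcal T'(V;W)=\gamma\mathcal AW(x,u)=\gamma\mathcal LW(x)\), which is linear and bounded.
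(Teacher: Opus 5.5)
Your proposal is correct, but it follows a different route from the paper. The paper's proof has four steps, each leaning on a cited result:
\begin{enumerate}
\item Danskin's lemma gives the directional derivative of \(\Psi\) pointwise in \(x\), for a fixed direction \(h\).
\item Continuity of the candidate derivative \(x\mapsto\min_{u\in\mathscr U^\star(q;x)}h(x,u)\) comes from Hogan's Theorems~5 and~7. In case (i) this uses the fact that the common value is both a minimum and a maximum over \(\mathscr U^\star(q;x)\).
\item Dini's theorem upgrades pointwise convergence of the difference quotients to uniform convergence. This relies on the quotients being monotone in \(t\), which follows from concavity of \(t\mapsto[\Psi(q+th)](x)\).
\item Lipschitz continuity of \(\Psi\), together with Proposition~2.49 of Bonnans--Shapiro, upgrades directional differentiability to Hadamard directional differentiability. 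Part (i) is handled by restricting \(\Psi\) to \(S^\star(q)\).
\end{enumerate}

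You instead sandwich the difference quotient directly along the perturbed directions \(h_N\). Uniformity in \(x\) then comes from a single compactness-and-contradiction argument based on the closed graph of the argmin map. In (i) you combine this with constancy of \(h(x,\cdot)\) on \(\mathscr U^\star(q;x)\), and in (ii) with lower hemicontinuity.

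What your route buys:
\begin{itemize}
\item It is self-contained, with no need for Danskin, Dini, monotonicity in \(t\), or the Lipschitz upgrade.
\item Continuity of the derivative comes for free, as a uniform limit of the continuous quotients.
\item You get a slightly stronger tangential statement, in which only the limit direction must lie in \(S^\star(q)\).
\end{itemize}
The paper's route is shorter because it outsources these steps to known results.

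One small inaccuracy: in (iii), \(\gamma\mathcal A W_N\) does lie in \(S^\star(\mathcal Q(V))\) for every \(N\). This follows by exactly the argument you give for \(\gamma\mathcal A W\) under \Cref{ass:uniqueness-type}, so your remark that it need not is wrong. It is harmless, since your version of (i) does not need it. The paper's restriction-based chain rule does rely on it.
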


\begin{proof}
\textup{(i)}
We first note that \(q\in S^\star(q)\). 
Since \(S^\star(q)\) is a closed linear subspace of
\(C(\mathcal X\times\mathcal U)\), the tangential
Hadamard differentiability statement may equivalently be viewed as
Hadamard differentiability of the restriction of \(\Psi\) to
\(S^\star(q)\) at \(q\).
For all $h\in S^\star(q)$, and
$u\in\mathscr{U}^\star(q;x)$, we have
\begin{align*}
    h(x,u)
    = 
    \min_{v\in\mathscr{U}^\star(q;x)}
    \, h(x,v)
    = 
    \max_{v\in\mathscr{U}^\star(q;x)}
    \, h(x,v).
\end{align*}
Applying Theorem~5 in \cite{Hogan1973} to the minimum and maximum shows
that this common value is continuous in \(x\).
Since $\Psi$ is Lipschitz,
it suffices to show directional
differentiability;
see Proposition~2.49 
in \cite{BS2000}.
Danskin's lemma implies
the derivative expressions
for each fixed $x \in \mathcal{X}$.
Now, the pointwise convergence
can be upgraded to uniform convergence
using  
Dini's theorem.

\textup{(ii)}
Theorem~7 in \cite{Hogan1973}
ensures continuity of
$x\mapsto\min_{v\in\mathscr{U}^\star(q;x)}
    \, h(x,v)$.
Now, the verification is similar to 
that in part~(i).

\textup{(iii)}
Recall from
\eqref{eq:Q}, and
\eqref{eq:inf-operator} that
\(\mathcal T=\Psi\circ\mathcal Q\). The map
\(\mathcal Q:C(\mathcal X)\to C(\mathcal X\times\mathcal U)\)
is affine and Hadamard differentiable at \(V\), with
\[
[\mathcal Q'(V;W)](x,u)
=
\gamma\mathbb{E}_{\xi\sim P}
\left[
W(F(x,u,\xi))
\right]
=
\gamma[\mathcal AW](x,u),
\]
where \(\mathcal A\) is defined in \eqref{eq:A-operator}.
If
\(u,v\in\mathscr U^\star(\mathcal Q(V);x)=\mathcal U^\star(x)\),
where the solution sets are defined in
\eqref{eq:inf-operator-solution-set} and \eqref{eq:solutionset},
then \Cref{ass:uniqueness-type} gives
for all $\xi \in \Xi$, 
\[
F(x,u,\xi)=F(x,v,\xi)=F^\star(x,\xi).
\]
Hence
\(\mathcal Q'(V;W)\in S^\star(\mathcal Q(V))\) for every
\(W\in C(\mathcal X)\).
By part~\textup{(i)} and the chain rule for Hadamard
differentiability, for \(u\in\mathcal U^\star(x)\),
\[
[\mathcal T'(V;W)](x)
=
[\mathcal Q'(V;W)](x,u)
=
\gamma\mathbb{E}_{\xi\sim P}
\left[
W(F^\star(x,\xi))
\right]
=
\gamma[\mathcal LW](x).
\]
\end{proof}

\begin{lemma}
\label{lem:statistical-limit}
Suppose that \Cref{inf-soc:ass-2} holds. 
Then the following holds.
\begin{enumerate}[label=\textup{(\roman*)},nosep]
    \item The CLT in \eqref{eq:Q-CLT} holds, that is, 
\[
N^{1/2}(\hat{\mathcal Q}_N(V)-\mathcal Q(V))
\rightsquigarrow \mathfrak Y
\quad
\text{in}
\quad C(\mathcal X\times\mathcal U),
\]
where \(\mathfrak Y\) is a centered Gaussian random element in
\(C(\mathcal X\times\mathcal U)\) with covariance kernel
\[
\operatorname{Cov}
\bigl(
\mathfrak Y(x,u),\mathfrak Y(x',u')
\bigr)
=
\operatorname{Cov}_{\xi\sim P}
\bigl(
\Phi(x,u,\xi),\Phi(x',u',\xi)
\bigr).
\]

\item If \Cref{ass:uniqueness-type} holds, then, 
$
\mathcal Q(V)
\in S^\star(\mathcal Q(V))
$,
and
with probability one,
$$
\hat{\mathcal Q}_N(V),\,
\hat{\mathcal Q}_{N}(\hat V_N),\,
\mathcal Q(\hat V_N), \,
\mathfrak Y
\in S^\star(\mathcal Q(V)).
$$
\item If either \Cref{ass:uniqueness-type}
holds, or
$\mathcal{U}^\star$ is lower hemicontinuous, then,
in $C(\mathcal X)$,
\begin{align}
\label{eq:application-delta-theorem}
N^{1/2}\bigl(\hat{\mathcal T}_N V-\mathcal T V\bigr)
=
N^{1/2}
\bigl(
\Psi(\hat{\mathcal Q}_N(V))-\Psi(\mathcal Q(V))
\bigr)    \rightsquigarrow
\Psi'(\mathcal Q(V);\mathfrak Y).
\end{align}
Moreover, under \Cref{ass:uniqueness-type},
the CLT in 
\eqref{eq:inf-soc:clt-mathcalV} holds with
\[
\mathfrak H(x)
\coloneqq 
[\Psi'(\mathcal Q(V);\mathfrak Y)](x)
=
\mathfrak Y(x,\pi^\star(x)),
\qquad x\in\mathcal X,
\]
where \(\pi^\star\) is any measurable selector of
\(\mathcal{U}^\star(x)\). 
\end{enumerate}
\end{lemma}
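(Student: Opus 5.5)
For part (i), I will treat $\{\Phi(\cdot,\cdot,\xi)\}$ as a random element of $C(\mathcal X\times\mathcal U)$ and apply a CLT in $C(E)$ for i.i.d.\ summands. The first task is to establish the regularity of $\Phi(x,u,\xi)=f(x,u,\xi)+\gamma V(F(x,u,\xi))$. By \Cref{lem:inf-soc:holder-value}, $V$ is $\alpha$-H\"older for some small $\alpha\in(0,1]$. Combined with \Cref{inf-soc:ass-2}(iii), this gives
\[
|\Phi(x,u,\xi)-\Phi(x',u',\xi)|\le \bigl(K(\xi)D^{1-\alpha}+\gamma[V]_\alpha K(\xi)^\alpha\bigr)\,\|(x,u)-(x',u')\|_2^\alpha,
\]
where $D$ is the diameter of $\mathcal X\times\mathcal U$. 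The constant $M(\xi)$ in front is square integrable because $K\in L^2$. Since $\mathcal X\times\mathcal U$ is a compact subset of Euclidean space, it has metric entropy $\log N(\varepsilon)\lesssim \log(1/\varepsilon)$. Under the metric $d(z,z')=\|z-z'\|_2^\alpha$, the entropy remains logarithmic, so the Jain--Marcus CLT applies. Equivalently, one can use the Lipschitz-in-parameter Donsker theorem in \cite{Vaart2023}, Example~2.11.13, which yields weak convergence in $\ell^\infty$ with continuous sample paths. Square integrability of $\Phi$ follows from continuity on a compact set. The limit $\mathfrak Y$ is therefore a tight centered Gaussian element of $C(\mathcal X\times\mathcal U)$ with the stated covariance.

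For part (ii), I will verify membership of each function directly from \Cref{ass:uniqueness-type}. Fix $x$ and $u,v\in\mathcal U^\star(x)$. For every $\xi$, we have $f(x,u,\xi)=f(x,v,\xi)$ and $F(x,u,\xi)=F(x,v,\xi)$. Consequently, for any $W$ the integrands of $\mathcal Q(W)$ and $\hat{\mathcal Q}_N(W)$ coincide at $(x,u)$ and $(x,v)$ pointwise in $\xi$. Taking $W=V$ or $W=\hat V_N$ then gives $\mathcal Q(V),\hat{\mathcal Q}_N(V),\hat{\mathcal Q}_N(\hat V_N),\mathcal Q(\hat V_N)\in S^\star(\mathcal Q(V))$ surely. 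For $\mathfrak Y$, the same pointwise identity gives $\Phi(x,u,\xi)=\Phi(x,v,\xi)$, so $\operatorname{Var}(\mathfrak Y(x,u)-\mathfrak Y(x,v))=0$. Hence $\mathfrak Y(x,u)=\mathfrak Y(x,v)$ almost surely for each such triple. To obtain this simultaneously, I will use continuity of sample paths together with a countable dense set of triples in the closed set $\{(x,u,v):u,v\in\mathcal U^\star(x)\}$; this set is closed since $\mathcal U^\star$ has a closed graph by continuity of $\mathcal Q(V)$. Alternatively, one can note that $S^\star$ is closed and the approximating processes lie in it, so the portmanteau theorem gives $\operatorname{Prob}\{\mathfrak Y\in S^\star\}\ge\limsup\operatorname{Prob}\{\cdot\}=1$. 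I will use the portmanteau argument, since it is cleaner.

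For part (iii), I will apply the functional delta method in the tangential form, e.g.\ Theorem~7.67 in \cite{SDR} or Theorem~3.10.4 in \cite{Vaart2023}. The first step is to note that $\hat{\mathcal T}_N V=\Psi(\hat{\mathcal Q}_N(V))$ and $\mathcal T V=\Psi(\mathcal Q(V))$ by \eqref{eq:t-operators}. Under \Cref{ass:uniqueness-type}, \Cref{lem:infimum-map-derivative}(i) provides Hadamard differentiability of $\Psi$ at $\mathcal Q(V)$ tangentially to $S^\star(\mathcal Q(V))$. Part (ii) shows that $\hat{\mathcal Q}_N(V)$ lies in this set, and that the limit $\mathfrak Y$ is concentrated on it; this is exactly what the tangential delta method requires. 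Under lower hemicontinuity, \Cref{lem:infimum-map-derivative}(ii) gives full Hadamard directional differentiability, and the directional delta method applies with no tangential restriction. Finally, under \Cref{ass:uniqueness-type} I will read off $\Psi'(\mathcal Q(V);\mathfrak Y)(x)=\mathfrak Y(x,\pi^\star(x))$ for any measurable selector. This process is continuous as the image of the continuous map $\Psi'$, and it is linear in $\mathfrak Y$, hence centered Gaussian. Its covariance is $\operatorname{Cov}(\Phi(x,\pi^\star(x),\xi),\Phi(x',\pi^\star(x'),\xi))=\mathcal K_\gamma(x,x')$, because $\Phi(x,\pi^\star(x),\xi)=\Phi^\star(x,\xi)$. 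This establishes \eqref{eq:inf-soc:clt-mathcalV}.

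The main obstacle is the tightness in part (i), because $V$ is only H\"older and not Lipschitz. This is why \Cref{lem:inf-soc:holder-value} is needed: it supplies the H\"older modulus with square-integrable constant that makes a bracketing/Jain--Marcus argument work.
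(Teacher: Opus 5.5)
Your proposal is correct and follows essentially the same route as the paper. For part~(i), both use the H\"older bound $|\Phi(x,u,\xi)-\Phi(x',u',\xi)|\le M(\xi)\delta^\alpha$ with square-integrable $M$ to obtain a Lipschitz-in-parameter Donsker class; the paper uses bracketing via Theorems~2.5.6 and~2.7.17 of \cite{Vaart2023} rather than Jain--Marcus, and it explicitly upgrades convergence from $\ell^\infty(\mathcal X\times\mathcal U)$ to $C(\mathcal X\times\mathcal U)$. Parts~(ii) and~(iii) match the paper (pointwise identities, closedness of $S^\star(\mathcal Q(V))$, and the tangential or directional delta method), and your check that $\mathfrak H$ has covariance $\mathcal K_\gamma$ via $\Phi(x,\pi^\star(x),\xi)=\Phi^\star(x,\xi)$ makes explicit a step the paper leaves implicit.
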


Our proof of \Cref{lem:statistical-limit} 
presented below requires further notation and notions.
For a set $E$,
let $\ell^\infty(E)$ denote the space of bounded
real-valued functions on \(E\), equipped with 
the sup norm. 
A class \(\mathcal H\) of measurable real-valued functions on 
$\Xi$ is \(P\)-Donsker if the empirical process
\[
    \bigg\{
    N^{-1/2}\sum_{i=1}^N
    \bigl(h(\xi^{(i)})-\mathbb{E}_{\xi \sim P}[h(\xi)]\bigr)
    : h\in\mathcal H
    \bigg\}
\]
converges in distribution in \(\ell^\infty(\mathcal H)\) to a tight Borel random element; see Section~2.1 in \cite{Vaart2023}.

\begin{proof}[{Proof of \Cref{lem:statistical-limit}}]
\textup{(i)} We show that the function class
$
\mathcal G
\coloneqq
\{\Phi(x,u,\cdot):(x,u)\in \mathcal X\times\mathcal U\}
$
is $P$-Donsker,
where $\Phi$ is defined in \eqref{eq:Phi}.
By \Cref{lem:inf-soc:holder-value}, there exist
\(\alpha\in(0,1]\) and \(L_V>0\) such that
\[
|V(y)-V(y')|
\le
L_V\|y-y'\|^\alpha,
\qquad
y,y'\in\mathcal X.
\]
For \((x,u),(x',u')\in\mathcal X\times\mathcal U\), set
\[
\delta
\coloneqq
\|x-x'\|_2+\|u-u'\|_2.
\]
Using \Cref{inf-soc:ass-2}(iii), we obtain for every \(\xi\in\Xi\),
\[
\begin{aligned}
|\Phi(x,u,\xi)-\Phi(x',u',\xi)|
&\le
K(\xi)\delta
+
\gamma L_V
\|F(x,u,\xi)-F(x',u',\xi)\|^\alpha
\\
&\le
K(\xi)\delta
+
\gamma L_V K(\xi)^\alpha \delta^\alpha.
\end{aligned}
\]
Since \(\mathcal X\) and \(\mathcal U\) are compact, there exists
\(D>0\) such that \(\delta\le D\) for all
\((x,u),(x',u')\in\mathcal X\times\mathcal U\). Therefore
\[
\delta \le D^{1-\alpha}\delta^\alpha,
\]
and hence
$
|\Phi(x,u,\xi)-\Phi(x',u',\xi)|
\le
M(\xi)\delta^\alpha
$,
where
\[
M(\xi)
\coloneqq
D^{1-\alpha}K(\xi)+\gamma L_V K(\xi)^\alpha.
\]
Because \(\mathbb{E}_{\xi \sim P}[K(\xi)^2]<\infty\), we have
\(\mathbb{E}_{\xi \sim P}[M(\xi)^2]<\infty\). 
Since the set 
\(\mathcal X\times\mathcal U \subset \mathbb{R}^n \times \mathbb{R}^m\) is compact, 
it follows from the standard entropy criterion for Donsker classes
(see Theorems~2.5.6 and 2.7.17 in \cite{Vaart2023})
that \(\mathcal G\) is \(P\)-Donsker. Consequently,
\[
N^{1/2}(\hat{\mathcal{Q}}_N(V) - \mathcal{Q}(V))
\rightsquigarrow
\mathfrak Y
\quad\text{in}\quad
\ell^\infty(\mathcal X\times\mathcal U),
\]
where \(\mathfrak Y\) is a centered Gaussian process with covariance
function
\[
\mathrm{Cov}\bigl(
\mathfrak Y(x,u),\mathfrak Y(x',u')
\bigr)
=
\mathrm{Cov}\bigl(
\Phi(x,u,\xi),\Phi(x',u',\xi)
\bigr).
\]

We now upgrade this to  convergence in distribution in
\(C(\mathcal X\times\mathcal U)\). Each
\(N^{1/2}(\hat{\mathcal Q}_N(V)-\mathcal Q(V))\) belongs to
\(C(\mathcal X\times\mathcal U)\). Since
\(C(\mathcal X\times\mathcal U)\) is closed in
\(\ell^\infty(\mathcal X\times\mathcal U)\), the Portmanteau theorem
implies that the distributional limit is supported on
\(C(\mathcal X\times\mathcal U)\). Therefore, after identifying the
limit with its \(C(\mathcal X\times\mathcal U)\)-valued version,
Theorem~1.3.10 in \cite{Vaart2023} yields
\[
N^{1/2}(\hat{\mathcal Q}_N(V)-\mathcal Q(V))
\rightsquigarrow
\mathfrak Y
\quad\text{in} \quad C(\mathcal X\times\mathcal U).
\]

\textup{(ii)}
Now, let \Cref{ass:uniqueness-type} hold. 
If
\(u,v\in\mathscr{U}^\star(\mathcal{Q}(V);x)\), then
for all $\xi \in \Xi$, 
\[
f(x,u,\xi)=f(x,v,\xi) = f^\star(x,\xi),
\quad
\text{and}
\quad 
F(x,u,\xi)=F(x,v,\xi) = F^\star(x,\xi),
\]
and for all $W \in C(\mathcal{X})$,
\begin{align*}
f(x,u,\xi) + \gamma W\bigl(F(x,u,\xi)\bigr)
=
f(x,v,\xi) + \gamma W\bigl(F(x,v,\xi)\bigr)
=
f^\star(x,\xi) + \gamma W\bigl(F^\star(x,\xi)\bigr).
\end{align*}
Applying (empirical) expectations,
we have
$\mathcal{Q}(V) \in S^\star(\mathcal{Q}(V))$,
and
with probability one,
$\hat{\mathcal Q}_N(V), \hat{\mathcal Q}_{N}(\hat V_N) \in S^\star(\mathcal{Q}(V))$.
Since \(S^\star(\mathcal{Q}(V))\) is a
closed linear subspace of \(C(\mathcal X\times\mathcal U)\), the distributional
limit $\mathfrak Y$ also satisfies
$\mathfrak Y\in S^\star(\mathcal{Q}(V))$.

\textup{(iii)}
If \Cref{ass:uniqueness-type} holds, then
\Cref{lem:infimum-map-derivative} and part~(ii) imply that
\(\Psi\) is Hadamard differentiable at \(\mathcal{Q}(V)\) tangentially to \(S^\star(\mathcal{Q}(V))\).
If, on the other hand, 
$\mathcal{U}^\star$ is lower hemicontinuous, then
\Cref{lem:infimum-map-derivative} implies that $\Psi$
is Hadamard directionally differentiable. 
In either case, the delta method gives
\[
N^{1/2}
\big(
\hat{\mathcal T}_N V
-
\mathcal T V
\big)
=
N^{1/2}
\bigl(
\Psi(\hat{\mathcal Q}_N(V))-\Psi(\mathcal{Q}(V))
\bigr)
\rightsquigarrow
\Psi'(\mathcal{Q}(V);\mathfrak Y)
\quad
\text{in} \quad  C(\mathcal X).
\]
This proves \eqref{eq:application-delta-theorem}.
Under \Cref{ass:uniqueness-type},
\Cref{lem:infimum-map-derivative} ensures, for \(h\in S^\star(\mathcal{Q}(V))\),
\[
[\Psi'(\mathcal{Q}(V);h)](x)=h(x,u),
\qquad
u\in\mathcal{U}^\star(x)=\mathscr{U}^\star(\mathcal{Q}(V);x).
\]
 Taking any selector
\(\pi^\star(x)\in\mathcal{U}^\star(x)\), we obtain
\[
[\Psi'(\mathcal{Q}(V);\mathfrak Y)](x)
=
\mathfrak Y(x,\pi^\star(x)).
\]
Because \(\mathfrak Y\in S^\star(\mathcal{Q}(V))\) with probability one, this value is
independent of the chosen selector. Thus
\(\mathfrak H=\Psi'(\mathcal{Q}(V);\mathfrak Y)\), which proves the claimed CLT in
\eqref{eq:inf-soc:clt-mathcalV}.
\end{proof}

\begin{proof}[{Proof of \Cref{thm:inf-soc:clt}}]
\Cref{lem:statistical-limit} ensures the CLT in
\eqref{eq:inf-soc:clt-mathcalV}.
From the DP equations in \eqref{eq:bellman-fixed-points}, 
\[
\begin{aligned}
N^{1/2}(\hat V_N-V)
&=
N^{1/2}(\hat{\mathcal T}_NV-\mathcal TV)
+
\gamma\mathcal L
N^{1/2}(\hat V_N-V)       \\
&\quad
+
N^{1/2}
\bigl(
\hat{\mathcal T}_N\hat V_N
-
\hat{\mathcal T}_NV
-
\gamma\mathcal L(\hat V_N-V)
\bigr).
\end{aligned}
\]
By \eqref{eq:inf-soc:fixed-point-linearization}, the last term is
\(o_p(1)\) in \(C(\mathcal X)\). Therefore
\[
(I-\gamma\mathcal L)N^{1/2}(\hat V_N-V)
=
N^{1/2}(\hat{\mathcal T}_NV-\mathcal TV)
+
o_p(1).
\]
Since \(I-\gamma\mathcal L\) is continuously invertible,
Slutsky's theorem, and the continuous mapping theorem imply
\[
N^{1/2}(\hat V_N-V)
\rightsquigarrow
(I-\gamma\mathcal L)^{-1}\mathfrak H
\quad
\text{in}
\quad C(\mathcal X).
\]
\end{proof}

\section{Proofs of \Cref{prop:main-linearization-conditions,lemma:additive-noise-transition-linearization,prop:main-equicontinuity-conditions,lem:Lipschitz-equicontinuity}}
\label{subsect:proofs-sufficient-conditions}

We prepare our proof of  \Cref{prop:main-linearization-conditions}.
The following estimate is repeatedly used throughout the appendices.

\begin{lemma}
If \Cref{inf-soc:ass-2} holds,
then
\begin{align}
\label{eq:scaled-value-function-difference-bound}
N^{1/2}\|\hat{V}_N - V\|_{\infty} = O_p(1).
\end{align}
\end{lemma}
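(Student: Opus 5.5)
We will use the contraction property of \(\hat{\mathcal T}_N\) to reduce the claim to the static sampling error at the deterministic function \(V\). Since \(\hat V_N=\hat{\mathcal T}_N\hat V_N\) and \(V=\mathcal T V\), we write
\[
\hat V_N-V
=
\bigl(\hat{\mathcal T}_N\hat V_N-\hat{\mathcal T}_N V\bigr)
+
\bigl(\hat{\mathcal T}_N V-\mathcal T V\bigr).
\]
\(\hat{\mathcal T}_N\) is a \(\gamma\)-contraction on \(C(\mathcal X)\) for every realization of the sample. Indeed, the minimum over \(u\) is \(1\)-Lipschitz in the sup norm, and \(\hat{\mathcal Q}_N\) is \(\gamma\)-Lipschitz because each summand depends on \(W\) only through \(\gamma W(F(x,u,\xi^{(i)}))\). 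Hence the first term has sup norm at most \(\gamma\|\hat V_N-V\|_\infty\). Rearranging gives the deterministic bound
\[
\|\hat V_N-V\|_\infty
\le
\frac{1}{1-\gamma}\,\|\hat{\mathcal T}_N V-\mathcal T V\|_\infty .
\]

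It therefore suffices to show that \(N^{1/2}\|\hat{\mathcal T}_N V-\mathcal T V\|_\infty=O_p(1)\). The minimum operator \(\Psi\) from \eqref{eq:inf-operator} is \(1\)-Lipschitz from \(C(\mathcal X\times\mathcal U)\) to \(C(\mathcal X)\). Consequently,
\[
\|\hat{\mathcal T}_N V-\mathcal T V\|_\infty
\le
\|\hat{\mathcal Q}_N(V)-\mathcal Q(V)\|_\infty .
\]
By \Cref{lem:statistical-limit}\textup{(i)}, which needs only \Cref{inf-soc:ass-2}, the process \(N^{1/2}(\hat{\mathcal Q}_N(V)-\mathcal Q(V))\) converges in distribution in \(C(\mathcal X\times\mathcal U)\) to the tight limit \(\mathfrak Y\). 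Applying the continuous mapping theorem to the norm, \(N^{1/2}\|\hat{\mathcal Q}_N(V)-\mathcal Q(V)\|_\infty\) converges in distribution to \(\|\mathfrak Y\|_\infty\). A sequence converging weakly to a real random variable is uniformly tight, so it is \(O_p(1)\). Combining the two bounds yields \eqref{eq:scaled-value-function-difference-bound}.

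No step here is genuinely hard. The only point requiring care is that the Donsker property of \(\{\Phi(x,u,\cdot)\}\), established in \Cref{lem:statistical-limit}\textup{(i)} via the H\"older continuity of \(V\) from \Cref{lem:inf-soc:holder-value}, is what supplies the uniform \(N^{-1/2}\) rate. A pointwise CLT would not suffice. Measurability of the supremum is harmless because the processes have continuous paths on a compact set, so the supremum can be taken over a countable dense subset.
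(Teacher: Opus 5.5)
Your proof is correct and is essentially the paper's argument: the $\gamma$-contraction of $\hat{\mathcal T}_N$ yields $\|\hat V_N-V\|_\infty\le(1-\gamma)^{-1}\|\hat{\mathcal T}_N V-\mathcal T V\|_\infty$, nonexpansiveness of the minimum reduces this to $\|\hat{\mathcal Q}_N(V)-\mathcal Q(V)\|_\infty$, and the functional CLT in \Cref{lem:statistical-limit}\textup{(i)} then gives the $O_p(N^{-1/2})$ rate. You also justify the contraction property of $\hat{\mathcal T}_N$ and derive tightness from the continuous mapping theorem; these fill in steps the paper states in one line.
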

\begin{proof}
Since \(\mathcal T\) and \(\hat{\mathcal T}_N\) are
\(\gamma\)-contractions
(see, e.g.,
eq.\ (4.2) in \cite{Shapiro2020}),
we have
$
\|\hat{V}_N - V\|_\infty
\leq (1-\gamma)^{-1}
\|\hat{\mathcal T}_NV-\mathcal TV\|_\infty
$.
Moreover, since the infimum map is nonexpansive,
\[
\|\hat{\mathcal T}_NV-\mathcal TV\|_\infty
\le
\|\hat{\mathcal Q}_N(V)-\mathcal Q(V)\|_\infty.
\]
Now, the CLT in \Cref{lem:statistical-limit}\textup{(i)} implies \eqref{eq:scaled-value-function-difference-bound}.
\end{proof}

For our next proof, we require the notation of tightness of a sequence of random elements.
Let \(Y_N\) be a  random elements 
in $C(\mathcal{Y})$,
with $\mathcal{Y}$ compact.
The sequence \((Y_N)\) is said to be tight if, for every
\(\varepsilon>0\), there exists a compact set
\(\mathcal{Y}_\varepsilon\subset C(\mathcal{Y})\) such that
\[
\mathrm{Prob}\{Y_N\in \mathcal{Y}_\varepsilon\}
\geq 1-\varepsilon
\quad \text{for all} \quad N \in \mathbb{N}.
\]
Equivalently, every subsequence of \((Y_N)\) admits a further subsequence that converges in distribution in \(C(\mathcal{Y})\); see Theorems~5.1 and~5.2 in \cite{Billingsley1999}.

\begin{proof}[{Proof of \Cref{prop:main-linearization-conditions}}]
\textup{(i)}
Using \eqref{eq:scaled-value-function-difference-bound}, 
we have
$\hat{V}_N - V\to 0$ as $N \to \infty$
in probability.
Combined with the Hadamard differentiability
established in \Cref{lem:infimum-map-derivative}, 
the local expansion \eqref{eq:frechet-linearization-error}, 
and \Cref{lem:statistical-limit}, 
we obtain 
\eqref{eq:inf-soc:linearization-error}.

\textup{(ii)}
Let us define 
$
Y_N \coloneqq N^{1/2}(\hat{V}_N-V)
$.
The bound \eqref{eq:scaled-value-function-difference-bound} ensures
$\|Y_N\|_\infty = O_p(1)$. 
Since $\mathcal{A}$ is compact, 
$(\mathcal{A}Y_N)$ is tight. 
We show that every subsequence of the left-hand side of
\eqref{eq:inf-soc:linearization-error} has a further subsequence
converging to zero in probability, without making the subsequences
explicit. By tightness, any subsequence of $(\mathcal{A}Y_N)$
has a further  subsequence, again denoted by
$(\mathcal{A}Y_N)$, converging in distribution to some random element in 
 $C(\mathcal{X}\times\mathcal{U})$.
Using the definition of $\mathcal{Q}$, we have
\begin{align*}
    N^{1/2}
    \big(\mathcal{Q}(\hat{V}_N) - \mathcal{Q}(V) \big)
    = \gamma \mathcal{A}Y_N.
\end{align*}
Combined with \Cref{lem:statistical-limit,lem:infimum-map-derivative} and 
the delta theorem
(see Theorem~9.76 in \cite{SDR}), 
$\mathcal{T}V = \Psi(\mathcal{Q}(V))$, 
and
$\mathcal{T}\hat{V}_N = \Psi(\mathcal{Q}(\hat{V}_N))$, 
we obtain
\eqref{eq:inf-soc:linearization-error}.

\textup{(iii)}
If \(\mathcal U\) is a singleton, then the minimizer is fixed and
$
\mathcal T\hat V_N-\mathcal T V
=
\gamma\mathcal L(\hat V_N-V)
$.
Thus, we obtain the assertion. 
Now let $\mathcal{U}$ contain at least two elements.
Since \(\mathcal U\) is finite and \([\mathcal{Q}(V)](\cdot,u)\) is continuous for
each \(u\), the pointwise uniqueness of the minimizer implies the
uniform gap
\begin{align}
\label{eq:optimality-gap}
\Delta
\coloneqq
\inf_{x\in\mathcal X}
\min_{\substack{u\in\mathcal U\\ u\neq\pi^\star(x)}}
\left\{
[\mathcal{Q}(V)](x,u)-[\mathcal{Q}(V)](x,\pi^\star(x))
\right\}
>0 .
\end{align}

Using \eqref{eq:scaled-value-function-difference-bound}, 
we have
$
\|\hat V_N-V\|_\infty=o_p(1)
$.
Define
\[
\mathcal E_N
\coloneqq
\left\{
2\gamma\|\hat V_N-V\|_\infty<\Delta
\right\}.
\]
Then \(\mathrm{Prob}\{\mathcal E_N\}\to1\). On \(\mathcal E_N\), for every
\(x\in\mathcal X\) and every \(u\neq\pi^\star(x)\),
\[
\begin{aligned}
[\mathcal{Q}(\hat V_N)](x,u)
-
[\mathcal{Q}(\hat V_N)](x,\pi^\star(x))
&
\ge
[\mathcal{Q}(V)](x,u)-[\mathcal{Q}(V)](x,\pi^\star(x))
-
2\gamma\|\hat V_N-V\|_\infty
\\
&
\ge
\Delta-2\gamma\|\hat V_N-V\|_\infty
>0 .
\end{aligned}
\]
Hence, on \(\mathcal E_N\), the minimizer in
\((\mathcal T\hat V_N)(x)\) is \(\pi^\star(x)\) for every
\(x\in\mathcal X\). Therefore, on \(\mathcal E_N\),
\[
\begin{aligned}
(\mathcal T\hat V_N)(x)-(\mathcal T V)(x)
&=
[\mathcal{Q}(\hat V_N)](x,\pi^\star(x))
-
[\mathcal{Q}(V)](x,\pi^\star(x))
\\
&=
\gamma[\mathcal L(\hat V_N-V)](x).
\end{aligned}
\]
Combined with \(\mathrm{Prob}\{\mathcal E_N\}\to1\),
we obtain \eqref{eq:inf-soc:linearization-error}.
\end{proof}

\begin{proof}[{Proof of \Cref{lemma:additive-noise-transition-linearization}}]
Let \(\rho: \Xi \to[0,\infty)\) be the density of $\xi$,
which   by zero outside
of $\Xi$.
We define \(\mathcal{Z} \coloneqq G(\mathcal X\times\mathcal U)\),
which is compact. We also define 
$\mathcal B:C(\mathcal X)\to C(\mathcal Z)$
by 
\[
[\mathcal{B}W](z)
\coloneqq 
\int_{\mathcal{Z} + \Xi} W(\kappa(y))\rho(y-z)\,\mathrm dy .
\]
Then \([\mathcal A W](x,u)=[\mathcal{B}W](G(x,u))\). 
We show that the image of $\mathcal{B}$ under 
the closed unit ball $\{W\in C(\mathcal X):\|W\|_\infty\le1\}$ is precompact. 
Since
\(|[\mathcal{B}W](z)|\le\|W\|_\infty
\), 
this image is bounded. 
Moreover, for
\(z,z'\in \mathcal{Z}\) and \(W\in C(\mathcal{X})\)
with $\|W\|_\infty \leq 1$, we have 
\[
\begin{aligned}
|[\mathcal{B}W](z)-[\mathcal{B}W](z')|
&\le
\int_{\mathcal{Z} + \Xi}
|W(\kappa(y))|\,|\rho(y-z)-\rho(y-z')|\,\mathrm dy  \\
&\le
\int_{\mathcal{Z} + \Xi}
|\rho(y-z)-\rho(y-z')|\,\mathrm dy .
\end{aligned}
\]
The right-hand side converges to zero as \(z'\to z\)
independently of $W$, by continuity of
translations;
see Lemma~4.3 in \cite{Brezis2011}. Hence,
the image of $\mathcal{B}$ over the closed unit ball in $C(\mathcal{X})$ is uniformly equicontinuous. 
By the Arzel\`a--Ascoli theorem
(see, e.g., Theorem~4.25 in \cite{Brezis2011}),
\(\mathcal{B}\) is compact.
Since the composition map \(h\mapsto h\circ G\) is bounded and linear
from \(C(\mathcal{Z})\) to \(C(\mathcal X\times\mathcal U)\), it follows that
\(\mathcal A\) is compact.
\end{proof}

\begin{proof}[{Proof of \Cref{prop:main-equicontinuity-conditions}}]
\textup{(i)}
Using the definition of $\Psi$
from \eqref{eq:inf-operator}, we have
$
\mathcal T=\Psi\circ\mathcal Q,
$
and 
$
\hat{\mathcal T}_N=\Psi\circ\hat{\mathcal Q}_N
$.
Hence
\begin{align*}
\big[\hat{\mathcal{T}}_{N}
-
\mathcal{T}\big](\hat{V}_{N})
-
\big[
\hat{\mathcal{T}}_{N}- \mathcal{T}
\big]
(V)
& =
\Psi\bigl(\hat{\mathcal Q}_{N}(\hat V_N)\bigr)
-
\Psi\bigl(\mathcal{Q}(V)\bigr)
\\
&\quad
-
\big[
\Psi\bigl(\mathcal{Q}(\hat V_N)\bigr)
-
\Psi\bigl(\mathcal{Q}(V)\bigr)
\big]
\\
&\quad
-
\big[
\Psi\bigl(\hat{\mathcal Q}_{N}(V)\bigr)
-
\Psi\bigl(\mathcal{Q}(V)\bigr)
\big].
\end{align*}

\Cref{lem:statistical-limit} ensures
that $N^{1/2}(\hat{\mathcal{Q}}_N(V) - \mathcal{Q}(V))$
converges in distribution.
Since $\mathcal{A}$ is compact,
the proof of \Cref{prop:main-linearization-conditions}
shows that $N^{1/2}(\mathcal{Q}(\hat V_N) - \mathcal{Q}(V))$
is tight. Combined with \eqref{eq:inf-soc:q-continuity}, and
\begin{align*}
\hat{\mathcal Q}_N(\hat V_N)-
\mathcal{Q}(V)
&=
\Big[\big[\hat{\mathcal Q}_N-\mathcal Q\big](\hat V_N)
-
\big[\hat{\mathcal Q}_N-\mathcal Q\big](V)
\Big]
\\
&\quad
+
\big[\hat{\mathcal Q}_N(V)-\mathcal Q(V) \big]
+
\big[\mathcal Q(\hat V_N)-\mathcal Q(V)\big],
\end{align*}
we have 
$N^{1/2}(\hat{\mathcal Q}_{N}(\hat V_N)
-
\mathcal{Q}(V))$
is tight.

By \Cref{lem:infimum-map-derivative} and
\Cref{inf-soc:ass-2,ass:uniqueness-type}, the map \(\Psi\) is Hadamard
differentiable at \(\mathcal Q(V)\)
tangentially to $S^\star(\mathcal{Q}(V))$.
\Cref{lem:statistical-limit} yields
$\mathcal Q(V) \in S^\star(\mathcal{Q}(V))$
and, 
with probability one,
$\hat{\mathcal Q}_{N}(\hat V_N)$,
$\mathcal Q(\hat V_N)$,
$\hat{\mathcal Q}_{N}(V) \in S^\star(\mathcal{Q}(V))$.

The preceding tightness statements imply that, along every subsequence,
there is a further subsequence along which
$
N^{1/2}\bigl(\hat{\mathcal Q}_N(\hat V_N)-\mathcal Q(V)\bigr)
$,
$
N^{1/2}\bigl(\mathcal Q(\hat V_N)-\mathcal Q(V)\bigr)
$,
and
$
N^{1/2}\bigl(\hat{\mathcal Q}_N(V)-\mathcal Q(V)\bigr)
$
converge jointly in distribution in
\(S^\star(\mathcal{Q}(V))^3\). Along this further subsequence, the
functional delta theorem gives, in \(C(\mathcal X)\),
\[
\begin{aligned}
\Psi\bigl(\hat{\mathcal Q}_{N}(\hat V_N)\bigr)
-
\Psi\bigl(\mathcal Q(V)\bigr) &=
\Psi'\bigl(
\mathcal Q(V);
\hat{\mathcal Q}_{N}(\hat V_N)-\mathcal Q(V)
\bigr)
+o_p(N^{-1/2}),
\\
\Psi\bigl(\mathcal Q(\hat V_N)\bigr)
-
\Psi\bigl(\mathcal Q(V)\bigr) &=
\Psi'\bigl(
\mathcal Q(V);
\mathcal Q(\hat V_N)-\mathcal Q(V)
\bigr)
+o_p(N^{-1/2}),
\\
\Psi\bigl(\hat{\mathcal Q}_{N}(V)\bigr)
-
\Psi\bigl(\mathcal Q(V)\bigr) &=
\Psi'\bigl(
\mathcal Q(V);
\hat{\mathcal Q}_{N}(V)-\mathcal Q(V)
\bigr)
+o_p(N^{-1/2}).
\end{aligned}
\]
Combining these three expansions with the preceding decomposition, and
using the linearity of \(\Psi'(\mathcal Q(V);\cdot)\), yields
\[
\begin{aligned}
&\big[\hat{\mathcal T}_{N}-\mathcal T\big](\hat V_N)
-
\big[\hat{\mathcal T}_{N}-\mathcal T\big](V)\\
&\quad =
\Psi'\bigl(
\mathcal Q(V);
\big[\hat{\mathcal Q}_{N}-\mathcal Q\big](\hat V_N)
-
\big[\hat{\mathcal Q}_{N}-\mathcal Q\big](V)
\bigr)
+o_p(N^{-1/2}).
\end{aligned}
\]
By \eqref{eq:inf-soc:q-continuity}, the argument of
\(\Psi'(\mathcal Q(V);\cdot)\) is \(o_p(N^{-1/2})\) in
\(C(\mathcal X\times\mathcal U)\). Since
\(\Psi'(\mathcal Q(V);\cdot)\) is bounded,
we obtain \eqref{eq:inf-soc:continuity}.

(ii)
Let
\[
A_N
\coloneqq
\max_{x\in\mathcal X,\ u\in\mathcal U}
|\hat{\mathcal{Q}}_N(V)(x,u)-\mathcal{Q}(V)(x,u)|.
\]
Since \(\mathcal X\times\mathcal U\) is finite, 
the CLT implies
$
A_N=O_p(N^{-1/2})
$.
Using \eqref{eq:scaled-value-function-difference-bound}, 
we obtain 
$
N^{1/2}\|\hat{V}_N - V\|_\infty=O_p(1)
$.
By uniqueness and finiteness, the optimality gap
$\Delta$ in \eqref{eq:optimality-gap}
is strictly positive.  Define
\[
\mathcal E_N
\coloneqq
\left\{
2A_N+2\gamma\|\hat{V}_N - V\|_\infty<\Delta
\right\}.
\]
Then \(\mathrm{Prob}\{\mathcal E_N\}\to1\).

On \(\mathcal E_N\), the minimizers defining
$
(\hat{\mathcal T}_N\hat V_N)(x)
$,
$
(\hat{\mathcal T}_NV)(x)
$,
$(\mathcal T\hat V_N)(x)$,
and 
$(\mathcal TV)(x)$
are all equal to \(\pi^\star(x)\), for every \(x\in\mathcal X\).  
We verify this for \((\mathcal T\hat V_N)\).
The other verifications are similar. 
On \(\mathcal E_N\), for every \(x\in\mathcal X\),
\[
\begin{aligned}
[\mathcal{Q}(\hat V_N)](x,u)
-
[\mathcal{Q}(\hat V_N)](x,\pi^\star(x))
&
\ge
[\mathcal{Q}(V)](x,u)-[\mathcal{Q}(V)](x,\pi^\star(x))
-2\gamma\|\hat V_N-V\|_\infty
\\
&
\ge
\Delta-2\gamma\|\hat V_N-V\|_\infty
>0.
\end{aligned}
\]
Thus \(\pi^\star(x)\) is the minimizer in \((\mathcal T\hat V_N)(x)\).

Let us define 
$
Y_N \coloneqq N^{1/2}(\hat{V}_N-V)
$.
Hence,
on \(\mathcal E_N\),
\[
\begin{aligned}
&
N^{1/2}
\bigl(
[\hat{\mathcal T}_N-\mathcal T](\hat V_N)
-
[\hat{\mathcal T}_N-\mathcal T](V)
\big)
=
\gamma
\bigg[
\frac1N\sum_{i=1}^N
Y_N(F^\star(\cdot,\xi^{(i)}))
-
\mathbb{E}_{\xi \sim P}[
Y_N(F^\star(\cdot,\xi))
]
\bigg].
\end{aligned}
\]
Recalling $P^\star$ from 
\Cref{rem:finite-state-bellman}, 
we define its empirical counterpart by
\[
\hat P_N^\star(x,y)
\coloneqq
\frac1N\sum_{i=1}^N
\mathbf 1_{\{F^\star(x,\xi^{(i)})=y\}},
\quad x, y \in \mathcal{X}.
\]
Then, on \(\mathcal E_N\),
\[
\begin{aligned}
\left|
N^{1/2}
\bigl(
[\hat{\mathcal T}_N-\mathcal T](\hat V_N)
-
[\hat{\mathcal T}_N-\mathcal T](V)
\bigr)(x)
\right|                                      
& =
\gamma
\left|
\sum_{y\in\mathcal X}
Y_N(y)\bigl(\hat P_N^\star(x,y)-P^\star(x,y)\bigr)
\right|                                      \\
&\le
\gamma\|Y_N\|_\infty
\sum_{y\in\mathcal X}
|\hat P_N^\star(x,y)-P^\star(x,y)|.
\end{aligned}
\]
Since \(\mathcal X\) is finite, the CLT implies
\[
\max_{x\in\mathcal X}
\sum_{y\in\mathcal X}
|\hat P_N^\star(x,y)-P^\star(x,y)|
=
O_p(N^{-1/2}).
\]
Moreover, 
by \eqref{eq:scaled-value-function-difference-bound}, 
we have $\|Y_N\|_\infty=O_p(1)$.
Therefore, on \(\mathcal E_N\),
\[
N^{1/2}
\bigl(
[\hat{\mathcal T}_N-\mathcal T](\hat V_N)
-
[\hat{\mathcal T}_N-\mathcal T](V)
\bigr)
=o_p(1)
\quad\text{in} \quad  C(\mathcal X).
\]
Combined with \(\mathrm{Prob}\{\mathcal E_N\}\to1\), this gives
\eqref{eq:inf-soc:continuity}.
\end{proof}

\begin{proof}[{Proof of \Cref{lem:Lipschitz-equicontinuity}}]
We first note that the stage-cost term cancels in
\eqref{eq:inf-soc:q-continuity}. Indeed, 
\[
\begin{aligned}
&\bigl([\hat{\mathcal Q}_N-\mathcal Q](\hat V_N)
-
[\hat{\mathcal Q}_N-\mathcal Q](V)\bigr)(x,u)       \\
&\quad =
\gamma
\bigg(
\frac1N\sum_{i=1}^N
\bigl[\hat V_N-V\bigr](F(x,u,\xi^{(i)}))
-
\mathbb{E}_{\xi\sim P}
\Big[
\big[\hat V_N-V\big](F(x,u,\xi))
\Big]
\bigg).
\end{aligned}
\]
Therefore, for verifying
\eqref{eq:inf-soc:q-continuity}, it is enough to apply
Theorem~3.13.4 in \cite{Vaart2023} to the class
\[
\left\{
\xi\mapsto W(F(x,u,\xi)):
(x,u,W)\in\mathcal X\times\mathcal U\times\mathcal V
\right\}.
\]
The hypotheses ensure that this class is \(P\)-Donsker;
see Theorems~2.5.6 and 2.7.9,   Corollaries~2.7.2 and 2.7.15,
and Example~2.10.9 in \cite{Vaart2023}.
Using \eqref{eq:scaled-value-function-difference-bound}, 
we have $N^{1/2}\|\hat V_N-V\|_\infty=O_p(1)$.
Hence
\begin{align*}
    \sup_{(x, u) \in \mathcal{X} \times \mathcal{U}}
    \, 
    \mathbb{E}_{\xi \sim P}
    \big[
    \big(\hat V_N(F(x,u,\xi))-V(F(x,u,\xi))\big)^2
    \big]
     = o_p(1).
\end{align*}
Now an application of Theorem~3.13.4 in 
\cite{Vaart2023} implies the assertion.
\end{proof}

\subsection{Proofs of \Cref{thm:nonunique-policy-limit,lem:nonunique-compact-directional-linearization}}
\label{app:nonunique-policy-limit}

\begin{proof}[{Proof of \Cref{thm:nonunique-policy-limit}}]
\Cref{lem:statistical-limit} implies the CLT in 
\eqref{eq:Q-CLT}.
For \(h\in C(\mathcal X\times\mathcal U)\) and
\(W\in C(\mathcal X)\), we define
\[
[\Gamma(h,W)](x)
\coloneqq
\min_{u\in\mathcal U^\star(x)}
\left\{
h(x,u)+\gamma[\mathcal A W](x,u)
\right\}.
\]
The asserted nonlinear fixed point equation 
can be written as \(\mathfrak Z=\Gamma(\mathfrak Y,\mathfrak Z)\).
For fixed \(h\in C(\mathcal X\times\mathcal U)\), the map
\(W\mapsto\Gamma(h,W)\) is a \(\gamma\)-contraction on
\(C(\mathcal X)\). We denote its unique fixed point by \(S(h)\), that
is, \(S(h)=\Gamma(h,S(h))\). The solution map
\(S:C(\mathcal X\times\mathcal U)\to C(\mathcal X)\) is Lipschitz.

We define
\[
B_N\coloneqq
N^{1/2}(\hat{\mathcal Q}_N(V)-\mathcal Q(V)),
\qquad
Y_N\coloneqq N^{1/2}(\hat V_N-V).
\]
By \Cref{lem:statistical-limit},
\(B_N\rightsquigarrow\mathfrak Y\) in
\(C(\mathcal X\times\mathcal U)\). Moreover,
\[
\mathcal Q(\hat V_N)-\mathcal Q(V)
=
\gamma\mathcal A(\hat V_N-V).
\]
Thus the definition of \(\Delta_N\) and \eqref{eq:inf-soc:q-continuity} give
\[
\Delta_N=B_N+\gamma\mathcal A Y_N+o_p(1)
\quad
\text{in} \quad  C(\mathcal X\times\mathcal U).
\]

Using \(V=\Psi(\mathcal Q(V))\) and
\(\hat V_N=\Psi(\hat{\mathcal Q}_N(\hat V_N))\), we obtain
\[
Y_N
=
N^{1/2}
\Big[
\Psi\big(\mathcal Q(V)+N^{-1/2}\Delta_N\big)
-
\Psi(\mathcal Q(V))
\Big].
\]
By \eqref{eq:nonunique-directional-linearization},
\[
Y_N=\Psi'(\mathcal Q(V);\Delta_N)+o_p(1).
\]
Since \(h\mapsto\Psi'(\mathcal Q(V);h)\) is Lipschitz continuous,
\[
Y_N
=
\Psi'(\mathcal Q(V);B_N+\gamma\mathcal A Y_N)+o_p(1)
=
\Gamma(B_N,Y_N)+o_p(1).
\]

Since \(S(B_N)\) is the unique fixed point of
\(W\mapsto\Gamma(B_N,W)\),
\[
\begin{aligned}
\|Y_N-S(B_N)\|_\infty
&\le
\|\Gamma(B_N,Y_N)-\Gamma(B_N,S(B_N))\|_\infty
+o_p(1)                                      \\
&\le
\gamma\|Y_N-S(B_N)\|_\infty+o_p(1).
\end{aligned}
\]
Hence \(\|Y_N-S(B_N)\|_\infty=o_p(1)\). Since \(S\) is Lipschitz and
\(B_N\rightsquigarrow\mathfrak Y\), the continuous mapping theorem gives
\(S(B_N)\rightsquigarrow S(\mathfrak Y)\) in \(C(\mathcal X)\).
Therefore \(Y_N\rightsquigarrow S(\mathfrak Y)\), where
\(S(\mathfrak Y)\) is the unique fixed point of
\(W\mapsto\Gamma(\mathfrak Y,W)\).
\end{proof}

\begin{proof}[{Proof of
\Cref{lem:nonunique-compact-directional-linearization}}]
By \eqref{eq:scaled-value-function-difference-bound},
\(N^{1/2}(\hat V_N-V)=O_p(1)\) in \(C(\mathcal X)\). Since
\(\mathcal A\) is compact,
\(N^{1/2}\mathcal A(\hat V_N-V)\) is tight in
\(C(\mathcal X\times\mathcal U)\). Also,
\Cref{lem:statistical-limit} implies that
\(N^{1/2}(\hat{\mathcal Q}_N(V)-\mathcal Q(V))\) is tight. Hence, using
\eqref{eq:inf-soc:q-continuity} and
\(\mathcal Q(\hat V_N)-\mathcal Q(V)=\gamma\mathcal A(\hat V_N-V)\),
we obtain
\[
\Delta_N
=
N^{1/2}(\hat{\mathcal Q}_N(V)-\mathcal Q(V))
+
\gamma N^{1/2}\mathcal A(\hat V_N-V)
+
o_p(1),
\]
so \(\Delta_N\) is tight in \(C(\mathcal X\times\mathcal U)\).
By \Cref{lem:infimum-map-derivative}, \(\Psi\) is Hadamard
directionally differentiable at \(\mathcal Q(V)\). Combined with
tightness of $\Delta_N$,
the delta method, 
and a subsequence argument, we obtain 
\eqref{eq:nonunique-directional-linearization}.
\end{proof}

\section{Proofs for \Cref{sec:policy-saa}}
\label{app:variance-proofs}

\begin{proof}[Proof of \Cref{prop:policy-clt}]
By \Cref{ass:parametric-policy-class}, the objective in
\eqref{eq:parametric-policy-population} is continuous on \(\Theta\).
Since \(\Theta\) is compact, the population policy optimization problem
\eqref{eq:parametric-policy-population} has at least one solution.
By well specification, there exists \(\bar\theta\in\Theta\) such that
\(\pi_{\bar\theta}=\pi^\star\). Hence
\[
\mathbb E_{\boldsymbol{\xi} \sim P^\infty}
\bigl[
J(\bar\theta;\boldsymbol\xi)
\bigr]
=
V(x_0).
\]
Moreover, \(\Pi_\Theta\) consists of admissible stationary Markov policies
for \eqref{eq:inf-soc}. Therefore, for every \(\theta\in\Theta\),
\[
\mathbb E_{\boldsymbol{\xi} \sim P^\infty}
\bigl[
J(\theta;\boldsymbol\xi)
\bigr]
\geq
V(x_0).
\]
It follows that the optimal value of
\eqref{eq:parametric-policy-population} is \(V(x_0)\), and
\(\bar\theta\) is a solution.
Let \(\theta^\star\) be any solution of
\eqref{eq:parametric-policy-population}. By the assumption in
\Cref{prop:policy-clt},
\(\pi_{\theta^\star}=\pi^\star=\pi_{\bar\theta}\). Identifiability then
implies \(\theta^\star=\bar\theta\). Thus \(\bar\theta\) is the unique
solution of \eqref{eq:parametric-policy-population}.

To establish the CLT, we follow the proof of
Theorem~5.7 in \cite{SDR}. Although the theorem is stated for
finite-dimensional random vectors, its argument uses only i.i.d.\
observations, square integrability of the objective, and a
square-integrable Lipschitz modulus. Applied to the trajectory-valued
observations \(\boldsymbol\xi^{(1)},\ldots,\boldsymbol\xi^{(N)}\), it yields
\[
N^{1/2}
\bigl(
\hat\vartheta_N^{\mathrm{tr}}-V(x_0)
\bigr)
\rightsquigarrow
\mathcal N
\big(
0,
\operatorname{Var}_{\boldsymbol{\xi} \sim P^\infty}
\bigl(
J(\bar\theta;\boldsymbol\xi)
\bigr)
\big).
\]
Since \(\pi_{\bar\theta}=\pi^\star\), the limiting variance equals
\(\sigma_{\pi^\star,\gamma}^2(x_0)\) by
\eqref{eq:direct-variance}.
\end{proof}

Let us define
\begin{align*}
\mathsf{g}(x, \xi) \coloneqq f(x, \pi^\star(x),\xi) + \gamma V(F(x,\pi^\star(x),\xi)) 
- V(x).
\end{align*}

\begin{proof}[{Proof of \Cref{prop:random-path-variance-ratio}}]
By compactness and continuity, \(f\)
is uniformly bounded by, say, $M_f < \infty$.
Since \(V=\mathcal T V\), the contraction bound gives
$
\|V\|_\infty
\le
(1-\gamma)^{-1}M_f
$.
Thus \(\Phi(\cdot,\pi^\star(\cdot),\cdot)\) and \(\mathsf{g}\) are bounded.
Consequently, \(\mathcal{K}_\gamma\) is bounded, and the series below are
absolutely convergent.

We first prove the DP-SAA identity
\eqref{eq:DP-variance-discounted-occupation}.
Since
the operator norm of $\mathcal{L}$
is at most one, the 
von~Neumann series and \eqref{eq:inf-soc:limit} give
\[
\mathfrak G
=
\sum_{t\geq 0}
\gamma^t\mathcal L^t\mathfrak H
\quad\text{in} \quad  C(\mathcal X).
\]

Next, 
we prove $[\mathcal L^t h](x_0) = \mathbb{E}[h(\boldsymbol x_{t})]$ 
by induction on \(t\).  For \(t=0\), it reduces to
\(h(x_0)=h(\boldsymbol x_0)\), since \(\boldsymbol x_0=x_0\).  Suppose
it holds for some \(t\).  Then, by the definition of \(\mathcal L\) and
the Markov property of the closed-loop process,
\[
\begin{aligned}
[\mathcal L^{t+1}h](x_0)
=
[\mathcal L^t(\mathcal L h)](x_0)  
=
\mathbb{E}[[\mathcal Lh](\boldsymbol x_t)] 
=
\mathbb{E}\left[
\mathbb{E}\left[
h(\boldsymbol x_{t+1})
\,\middle|\,
\boldsymbol x_t
\right]
\right] 
=
\mathbb{E}[h(\boldsymbol x_{t+1})].
\end{aligned}
\]
Thus the identity holds for all \(t\ge0\).
Therefore
\[
\mathfrak G(x_0)
=
\sum_{t\geq 0}
\gamma^t
\mathbb{E}
\left[
\mathfrak H(\boldsymbol x_t)
\mid
\mathfrak H
\right].
\]

Using the independent copy
\((\tilde{\boldsymbol x}_t)_{t\ge0}\), Fubini's theorem, and the
covariance kernel of \(\mathfrak H\), we obtain
\[
\begin{aligned}
\operatorname{Var}(\mathfrak G(x_0))
&=
\sum_{s, t \geq 0}
\gamma^{s+t}
\operatorname{Cov}
\left(
\mathbb{E}[
\mathfrak H(\boldsymbol x_s)\mid\mathfrak H],
\mathbb{E}[
\mathfrak H(\boldsymbol x_t)\mid\mathfrak H]
\right)                                      \\
&=
\sum_{s, t \geq 0}
\gamma^{s+t}
\mathbb{E}
\left[
\operatorname{Cov}
\left(
\mathfrak H(\boldsymbol x_s),
\mathfrak H(\tilde{\boldsymbol x}_t)
\right)
\right]                                      
=
\sum_{s, t \geq 0}
\gamma^{s+t}
\mathbb{E}
\left[
\mathcal{K}_\gamma(\boldsymbol x_s,\tilde{\boldsymbol x}_t)
\right].
\end{aligned}
\]
This proves the first identity
in \eqref{eq:DP-variance-discounted-occupation}.
Since \(\boldsymbol x_s\) and
\(\tilde{\boldsymbol x}_t\) are independent with laws \(\mu_s\) and
\(\mu_t\),
\[
\mathbb{E}
\left[
\mathcal K_\gamma(\boldsymbol x_s,\tilde{\boldsymbol x}_t)
\right]
=
\int_{\mathcal X\times\mathcal X}
\mathcal K_\gamma(x,x')\,\mu_s(\mathrm dx)\mu_t(\mathrm dx').
\]
Hence
\[
\sigma_{\mathrm{DP},\gamma}^2(x_0)
=
\sum_{s, t \geq 0}
\gamma^{s+t}
\int_{\mathcal X\times\mathcal X}
\mathcal K_\gamma(x,x')\,\mu_s(\mathrm dx)\mu_t(\mathrm dx').
\]
On the other hand, by the definition of \(\nu_\gamma\),
\[
\nu_\gamma\otimes\nu_\gamma
=
(1-\gamma)^2
\sum_{s, t \geq 0}
\gamma^{s+t}\,\mu_s\otimes\mu_t ,
\]
where the equality is in the sense of finite measures. Therefore,
using boundedness of \(\mathcal K_\gamma\) to interchange the integral
and the absolutely convergent series,
\[
\begin{aligned}
&\int_{\mathcal X\times\mathcal X}
\mathcal K_\gamma(x,x')\,
\nu_\gamma(\mathrm dx)\nu_\gamma(\mathrm dx')
=
(1-\gamma)^2
\sum_{s, t \geq 0}
\gamma^{s+t}
\int_{\mathcal X\times\mathcal X}
\mathcal K_\gamma(x,x')\,\mu_s(\mathrm dx)\mu_t(\mathrm dx').
\end{aligned}
\]
Dividing by \((1-\gamma)^2\) proves
\eqref{eq:DP-variance-discounted-occupation}.

We now prove the direct SAA variance identity. 
The DP equation under the optimal policy implies
$
\mathbb{E}_{\xi \sim P}[\mathsf{g}(x,\xi)]
=
0
$, 
$x\in\mathcal X$.
For $t \geq 0$, let
$\xi_{[t]} \coloneqq  (\xi_0, \ldots, \xi_{t})$.
Then,
for $t\geq 1$, \(\boldsymbol x_t\) is \(\xi_{[t-1]}\)-measurable and \(\xi_t\)
is independent of \(\xi_{[t-1]}\). Hence
\[
\mathbb{E}[\mathsf{g}(\boldsymbol x_t,\xi_t)\mid\xi_{[t-1]}]=0.
\]
Thus \((\mathsf{g}(\boldsymbol x_t,\xi_t))_{t\ge0}\) is a martingale-difference sequence.
Moreover, for every \(T\ge0\),
\[
\sum_{t=0}^T\gamma^t \mathsf{g}(\boldsymbol x_t,\xi_t)
=
\sum_{t=0}^T
\gamma^t
f(\boldsymbol x_t,\pi^\star(\boldsymbol x_t),\xi_t)
-
V(x_0)
+
\gamma^{T+1}V(\boldsymbol x_{T+1}).
\]
Since \(V\) is bounded, the last term converges to zero in mean square.
Letting \(T\to\infty\), 
\[
\sum_{t\geq 0}
\gamma^t
f(\boldsymbol x_t,\pi^\star(\boldsymbol x_t),\xi_t)
-
V(x_0)
=
\sum_{t\geq 0}\gamma^t \mathsf{g}(\boldsymbol x_t,\xi_t)
\]
with convergence in mean square.
Since \((\mathsf{g}(\boldsymbol x_t,\xi_t))_{t\ge0}\) is a martingale-difference sequence,
\[
\operatorname{Var}\biggl(\sum_{t\geq 0}
\gamma^t
f(\boldsymbol x_t,\pi^\star(\boldsymbol x_t),\xi_t)\biggr)
=
\sum_{t\geq 0}
\gamma^{2t}
\mathbb{E}[\mathsf{g}(\boldsymbol x_t,\xi_t)^2].
\]
Using  \(\mathbb{E}_{\xi \sim P}[\mathsf{g}(x,\xi)]=0\), 
$
\mathbb{E}[\mathsf{g}(\boldsymbol x_t,\xi)^2\mid\xi_{[t-1]}]
=
\mathcal{K}_\gamma(\boldsymbol x_t,\boldsymbol x_t)
$.
Taking expectations gives
\[
\operatorname{Var}\biggl(\sum_{t\geq 0}
\gamma^t
f(\boldsymbol x_t,\pi^\star(\boldsymbol x_t),\xi_t)\biggr)
=
\sum_{t\geq 0}
\gamma^{2t}
\mathbb{E}
\left[
\mathcal{K}_\gamma(\boldsymbol x_t,\boldsymbol x_t)
\right].
\]
Since \(\boldsymbol x_t\) has law \(\mu_t\),
$
\mathbb{E}
\left[
\mathcal K_\gamma(\boldsymbol x_t,\boldsymbol x_t)
\right]
=
\int_{\mathcal X}
\mathcal K_\gamma(x,x)\,\mu_t(\mathrm dx)
$.
Thus, by boundedness of \(\mathcal K_\gamma\),
\[
\int_{\mathcal X}
\mathcal K_\gamma(x,x)\,\nu_{\gamma^2}(\mathrm dx)
=
(1-\gamma^2)
\sum_{t\geq 0}
\gamma^{2t}
\int_{\mathcal X}
\mathcal K_\gamma(x,x)\,\mu_t(\mathrm dx).
\]
Dividing by \(1-\gamma^2\) proves
\eqref{eq:direct-variance-discounted-occupation}. 
\end{proof}

\begin{proof}[Proof of \Cref{lem:param-finite-state-variances}]
We order the states as \((0,1,2)\) and identify \(C(\mathcal X)\) with
\(\mathbb R^3\) as in \Cref{rem:finite-state-bellman}. Then
\[
P^\star=
\begin{bmatrix}
0&1&0\\
0&0&1\\
0&1&0
\end{bmatrix},
\quad \text{and} \quad 
\operatorname{Cov}(\mathfrak H_\eta)=
\begin{bmatrix}
1&0&0\\
0&\eta^2&\eta^3\\
0&\eta^3&\eta^4
\end{bmatrix},
\]
where
\(\mathfrak H_\eta=(G_Y,\eta G_Z,\eta^2G_Z)^\top\) and \(G_Y,G_Z\) are
independent standard normal random variables.
Let \(e_1\coloneqq (1,0,0)^\top\). By the finite-state representation in
\Cref{rem:finite-state-bellman},
\[
e_1^\top(I-\gamma P^\star)^{-1}
=
\Big(
1,\frac{\gamma}{1-\gamma^2},
\frac{\gamma^2}{1-\gamma^2}
\Big).
\]
Consequently,
\[
e_1^\top(I-\gamma P^\star)^{-1}\mathfrak H_\eta
=
G_Y+
\frac{\gamma \eta(1+\gamma \eta)}{1-\gamma^2}G_Z,
\]
which gives
\[
\sigma_{\mathrm{DP},\eta,\gamma}^2(0)
=
1+
\frac{\gamma^2\eta^2(1+\gamma \eta)^2}{(1-\gamma^2)^2}.
\]

For the trajectory-based variance computation, we
use
\(\boldsymbol x_0=0\), \(\boldsymbol x_{2k+1}=1\), and
\(\boldsymbol x_{2k+2}=2\) for \(k\geq0\). Independence across time
therefore yields
\[
\begin{aligned}
\sigma_{\pi^\star,\eta,\gamma}^2(0)
=
1+
\sum_{k=0}^{\infty}\gamma^{4k+2}\eta^2
+
\sum_{k=0}^{\infty}\gamma^{4k+4}\eta^4 =
1+
\frac{\gamma^2\eta^2+\gamma^4\eta^4}{1-\gamma^4}.
\end{aligned}
\]
\end{proof}

\bibliography{CLT-Infinite-Horizon.bbl}

\end{document}